\def\bR{\mathbb{R}}
\def\cA{\mathcal{A}}
\def\cB{\mathcal{B}}
\def\cH{\mathcal{H}}
\def\sL{\mathscr{L}}
\newcommand{\R}{{\mathbb R}}
\newcommand{\N}{{\mathbb N}}
\newcommand{\Z}{{\mathbb Z}}
\newcommand{\e}{\epsilon}
\newcommand{\ve}{\varepsilon}
\newcommand{\al}{\alpha}
\newcommand{\p}{\partial}
\newcommand{\vp}{\varphi}
\newcommand{\supp}{\operatorname{supp}}
\newcommand{\Cut}{\operatorname{Cut}}
\newcommand{\Jac}{\operatorname{Jac}}
\newcommand{\tr}{\operatorname{tr}}
\newcommand{\D}{\nabla}
\newcommand{\ra}{\rightarrow}
\newcommand{\La}{\triangle}
\newcommand{\bs}{\backslash}
\newcommand{\vol}{\operatorname{Vol}}
\providecommand{\ip}[1]{\left\langle#1\right\rangle}
\providecommand{\set}[1]{\{#1\}}
\providecommand{\abs}[1]{\lvert#1\rvert}
\newtheorem{thm}{Theorem}[section]
\newtheorem{lemma}[thm]{Lemma}
\newtheorem{cor}[thm]{Corollary}
\newtheorem{prop}[thm]{Proposition}
\newtheorem{definition}[thm]{Definition}
\newtheorem*{notation}{Notation}
\theoremstyle{definition}
\newtheorem*{acknowledgment}{Acknowledgment}
\begin{document}
\title[Harnack inequality for nondivergent parabolic operators]{Harnack inequality for nondivergent parabolic operators on Riemannian manifolds}
\author{Seick Kim}
\address{Seick Kim:\\
Department of Computational Science and Engineering,
Yonsei University,
50 Yonsei-ro, Seodaemun-gu, Seoul, 120-749, Republic of Korea}
\email{kimseick@yonsei.ac.kr}

\author{Soojung Kim}
\address{Soojung Kim:\\
School of Mathematical Sciences,
Seoul National University,
1 Gwanak-ro, Gwanak-gu,  Seoul, 151-747, Republic of Korea
}
\email{soojung26@gmail.com}
\author{Ki-ahm Lee}
\address{Ki-Ahm Lee:\\
School of Mathematical Sciences,
Seoul National University,
1 Gwanak-ro, Gwanak-gu,  Seoul, 151-747, Republic of Korea
}
\email{kiahm@math.snu.ac.kr}

\maketitle
\begin{abstract}
We consider second-order linear parabolic operators in non-divergence form that are intrinsically defined on Riemannian manifolds.
In the elliptic case, Cabr\'e proved a global Krylov-Safonov Harnack inequality under the assumption that the sectional curvature of the underlying manifold is nonnegative.
Later, Kim improved Cabr\'e's result by replacing the curvature condition by a certain condition on the distance function.
Assuming essentially the same condition introduced by Kim, we establish Krylov-Safonov Harnack inequality for nonnegative solutions of the non-divergent parabolic equation.
This, in particular, gives a new proof for Li-Yau Harnack inequality for positive solutions to the heat equation in a manifold with nonnegative Ricci curvature.
\end{abstract}

\tableofcontents 
\newpage

\section{Introduction and main results}\label{sec-intro}
In this paper, we study Harnack inequalities for solutions of second-order parabolic equations of non-divergence type on Riemannian manifolds.
Let $(M,g)$ be a smooth, complete Riemannian manifold of dimension $n$.
For $x\in M$ and $t \in \bR$, let $A_{x,t}$ be a positive definite symmetric endomorphism of $T_x M$, where $T_x M$ is the tangent space of $M$ at $x$.
We denote $\ip{X,Y}:= g(X,Y)$ and $\abs{X}^2:=\ip{X,X}$ and assume that 
\begin{equation}
\lambda \abs{X}^2 \le \ip{A_{x,t}\, X,X} \le \Lambda |X|^2,\quad \forall (x,t)\in M\times \bR,\quad \forall X\in T_x M
\end{equation} 
for some positive constants $\lambda$ and $\Lambda$.
We consider a second-order, linear, uniformly parabolic operator $\sL$ defined by
\begin{equation}
\sL u=L u - u_t:= \mathrm{trace} (A_{x,t}\circ  D^2u)-u_t \quad\mbox{in}\,\,\,M\times \bR,
\end{equation}
where $\circ$ denotes composition of endomorphisms and $D^2 u$ denotes the Hessian of the function $u$ defined by
\[
D^2 u \cdot X=\nabla_X \nabla u,
\]
where $\nabla u(x) \in T_x M$ is the gradient of $u$ at $x$.
Notice that in the special case when $A_{x,t}\equiv \mathrm{Id}$,  the equation $\sL u=0$ simply becomes the usual heat equation $u_t-\Delta u=0$.

In the elliptic setting, Cabr\'e proved in a remarkable paper \cite{Ca} that if the underlying manifold $M$ has nonnegative sectional curvature, then Krylov-Safonov type (elliptic) Harnack inequality holds for solutions of uniformly elliptic equations in non-divergence form.
Later, Kim \cite{K} improved Cabr\'e's result by removing the sectional curvature assumption and imposing a certain condition on distance function which, in the parabolic setting, should read  as follows:
For all $p \in M$, we have
\begin{align}\label{cond-M-1}
\La d_p(x) &\leq \frac{n-1}{d_p(x)}\quad\mbox{for}\quad x\notin\Cut(p)\cup\{p\},\\
\label{cond-M-2}
L d_p(x) &\leq \frac{a_{L}}{d_p(x)}\quad\mbox{for}\quad x\notin\Cut(p)\cup\{p\},\quad t\in \bR,
\end{align}
where $d_p(x)=d(p,x)$ is the geodesic distance between $p$ and $x$, $\Cut(p)$ denotes the cut locus of $p$, and $a_L$ is some positive constant that is fixed by the operator $L$.
We shall prove that if the above conditions \eqref{cond-M-1} and \eqref{cond-M-2} hold, then we have Krylov-Safonov Harnack inequality for the parabolic operator $\sL$; i.e., if $u$ is a (smooth) nonnegative solution of $\sL u =f$ in a cylinder $K_{2R}:=B_{2R}(x_0)\times (t_0-4R^2, t_0)$, where $x_0\in M$ and $t_0\in \bR$, then we have
\begin{equation}
\label{eq:I-05}
\sup_{K_R^-} u \le C\left\{ \inf_{K_R^+}u+ R^2 \left(\frac{1}{\vol(K_{2R})}\int_{K_{2R}} \abs{f}^{n+1} \right)^{\frac{1}{n+1}}\right\},
\end{equation}
where $K_R^-:=B_R(x_0) \times(t_0-3R^2,t_0-2R^2)$, $K_R^+:=B_R(x_0) \times(t_0-R^2,t_0)$, $\vol$ denotes the volume, and $C$ is a uniform constant depending only on $n,\lambda,\Lambda$ and $a_{L}$.
It is well known that the condition \eqref{cond-M-1} holds if the manifold $M$ has nonnegative Ricci curvature.
Also, as it is proved in \cite{K}, the condition \eqref{cond-M-2} is satisfied, for example, if for all $x \in M$ and any unit vector $e\in T_x M$, we have $\mathcal{M}^-[R(e)] \ge 0$.
Here,  $R(e)$ is the Ricci transformation of $T_x M$ into itself given by $R(e)X:=R(X,e)e$, where $R(X,Y)Z$ is the Riemannian curvature tensor, and
\[
\mathcal{M}^-[R(e)]=\mathcal{M}^-[R(e),\lambda, \Lambda]:=\lambda \sum_{\kappa_i>0} \kappa_i + \Lambda \sum_{\kappa_i <0} \kappa_i,
\]
where $\kappa_i$ are eigenvalues of the (symmetric) endomorphism $R(e)$.
In the case when $\sL$ is the heat operator and $M$ has nonnegative Ricci curvature, then the  condition $\mathcal{M}^-[R(e)]\ge 0$ is satisfied and thus the Harnack inequality \eqref{eq:I-05} holds; i.e., if $M$ has nonnegative Ricci curvature, then we have
\[
\sup_{K_R^-} u \le C_n\left\{ \inf_{K_R^+}u+ R^2 \left(\frac{1}{\vol(K_{2R})}\int_{K_{2R}} \abs{u_t-\Delta u}^{n+1} \right)^{\frac{1}{n+1}}\right\},
\]
where $C_n$ is a constant that depends only on the dimension $n$.
This, in particular implies the Harnack inequality of Li and Yau \cite{LY}.
Also, in the case when $M$ has nonnegative sectional curvature, then the condition $\mathcal{M}^-[R(e)]\ge 0$ is trivially satisfied and we have the inequality \eqref{eq:I-05} with a constant $C$ depending only on $n, \lambda, \Lambda$, which especially reproduces the Harnack inequality by Krylov and Safonov \cite{KS} in the Euclidean space.

One crucial ingredient in proving the Euclidean Krylov-Safonov Harnack inequality is the Krylov-Tso estimate, which is the parabolic counterpart of the Aleksandrov-Bakelman-Pucci (ABP) estimate.
The Krylov-Tso estimate as well as the classical ABP estimate is proved using affine functions, which have no intrinsic interpretation in  general Riemannian manifolds.
In the elliptic case, Cabr\'e ingeniously overcame this difficulty by replacing the affine functions by quadratic functions; quadratic functions have geometric meaning as the square of distance functions.
Following Cabr\'e's approach, we introduce an intrinsically geometric version of Krylov-Tso normal map, namely,
\[
\Phi(x,t):=\left(\exp_x \nabla_x u(x,t), -\frac{1}{2}d\left(x,\exp_x \nabla u(x,t)\right)^2 - u(x,t)\right).
\]
The map $\Phi$ is called  the parabolic normal  map related to $u(x,t)$.
A few remarks are in order regarding the normal map.
In  the classical ABP (and Krylov-Tso) estimate, an affine function concerning with the (elliptic) normal map $x\mapsto \nabla u(x)$  plays a role to bound the maximum of $u$ by  estimating the measure of the image of  the normal map.
Since an affine function cannot be generalized naturally to an intrinsic object in Riemannian manifolds, Cabr\'e used paraboloids instead in \cite{Ca}.
The map
\[
p\mapsto \min_{\Omega} \set{u(x)-p\cdot x}\quad\mbox{for a domain }\,\Omega
\]
is considered (up to a sign) as the Legendre transform of $u$. 
Krylov \cite{Kr} discovered the parabolic version of the
Aleksandrov-Bakelman maximum principle and Tso \cite{T} later simplified his proof by using the map
\[
(x,t)\mapsto (\nabla_x u(x,t), \nabla_xu(x,t)  \cdot x -u(x,t)).
\]

We end the introduction by stating our main theorems.
The rest of the paper shall be devoted to their proof.
Below and hereafter, we denote
\[
\fint_Q f:=\frac{1}{\vol(Q)} \int_Q f
\]
and
\[
K_r(x_0,t_0):=B_r(x_0)\times (t_0-r^2,t_0],\quad (x_0, t_0) \in M \times \bR.
\]

\begin{thm}[Harnack inequality]\label{thm-harnack-I}
Suppose conditions \eqref{cond-M-1}, \eqref{cond-M-2} hold.
Let $u$ be a nonnegative smooth function in $K_{2R}(x_0,4R^2)$, where $x_0\in M$ and $R>0$.
Then, we have
\begin{equation}
\sup_{K_R(x_0,2R^2)}u\leq C\left\{ \inf_{K_R(x_0,4R^2)}u+R^2 \left(\fint_{K_{2R}(x_0,4R^2)} \abs{\sL u}^{n+1} \right)^{\frac{1}{n+1}}\right\},
\end{equation} 
where $C$ is a uniform constant depending only on $n,\lambda,\Lambda$ and $a_{L}$.
\end{thm}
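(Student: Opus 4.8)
The plan is to run the classical Krylov--Safonov iteration scheme, using Cabr\'e's device of replacing affine functions by squared distance functions, now adapted to the parabolic setting through the normal map $\Phi$ introduced above. The argument rests on four ingredients: (i) a Krylov--Tso--type estimate bounding $\sup u$ by an $L^{n+1}$ integral of $\sL u$ over a \emph{contact set}; (ii) a barrier-based measure estimate; (iii) a weak Harnack ($L^{\ve}$) estimate for nonnegative supersolutions, obtained by iterating (ii) with a parabolic Calder\'on--Zygmund / crawling-ink-spots covering lemma; and (iv) a local maximum principle for subsolutions. Conditions \eqref{cond-M-1} and \eqref{cond-M-2} enter throughout: \eqref{cond-M-1} yields the Laplacian comparison that makes metric balls volume-doubling via Bishop--Gromov, and both inequalities bound $\La(\tfrac12 d_p^2)$ and $L(\tfrac12 d_p^2)$ from above off the cut locus, which is precisely what the ABP-type estimate and the barriers require. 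The final constant's dependence on $a_L$ propagates from these bounds.

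\emph{Step 1 (Krylov--Tso estimate).} I would analyze $\Phi$ on the set where $u$, after adding a concave paraboloid $-\tfrac12 d(\cdot,y)^2$, is touched from below by its (sliding-in-time) lower envelope. On this contact set the touching point lies off $\Cut(y)\cup\{y\}$, so the pointwise inequalities \eqref{cond-M-1}--\eqref{cond-M-2} apply; one shows $\Phi$ is locally Lipschitz there and estimates its Jacobian. The spatial block of $D\Phi$ is governed by $D^2 u$, which on the contact set is pinched between $-D^2(\tfrac12 d_y^2)$ and an upper bound coming from the touching; writing $\sL u=\tr(A_{x,t}\circ D^2 u)-u_t$ and using the upper bounds on $L(\tfrac12 d_y^2)$ one bounds $\Jac\Phi$ by a constant times a product of spatial factors ($\sim\abs{\sL u}^{n}$) and a temporal factor ($\sim\abs{u_t}$), and balancing the two over the admissible paraboloids recovers the sharp exponent $n+1$. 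Since $\Phi$ covers a region of the right size in $M\times\bR$, integrating the Jacobian gives, schematically,
\[
\sup_{K_{R}} u \;\le\; \sup_{\text{bottom of }K_R} u \;+\; C R \left( \int_{\{\text{contact set}\}} \abs{\sL u}^{n+1} \right)^{\frac{1}{n+1}}.
\]

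\emph{Step 2 (barrier, measure estimate, iteration).} Using functions of $d_p$ and checking the sign of $\sL$ applied to them via \eqref{cond-M-1}--\eqref{cond-M-2}, I would build a supersolution on an annular parabolic region and feed it into Step 1 to prove the basic decay estimate: if $u\ge 0$, $\sL u\le \ve_0$ in $K_{2R}$ and $\inf u\le 1$ on the upper sub-cylinder, then $u\le M$ on a fixed fraction of some sub-cylinder, with $M,\ve_0$ universal. Iterating this via the parabolic Krylov--Safonov covering lemma (here the volume doubling from \eqref{cond-M-1} replaces the Lebesgue-measure combinatorics, and the cylinder geometry $B_r\times(t_0-r^2,t_0]$ dictates the stacking) produces the weak Harnack inequality $(\fint_{K_R^-} u^{\ve})^{1/\ve}\le C(\inf_{K_R^+} u + R^2(\fint_{K_{2R}}\abs{\sL u}^{n+1})^{1/(n+1)})$ for some small $\ve>0$. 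A symmetric argument applied to subsolutions (again via Step 1) gives the local maximum principle $\sup_{K_R} u\le C((\fint u^{\ve})^{1/\ve} + R^2(\fint\abs{\sL u}^{n+1})^{1/(n+1)})$. Combining the two and chaining finitely many overlapping cylinders to bridge the time gap between $K_R^-$ and $K_R^+$ yields Theorem~\ref{thm-harnack-I}.

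\emph{Main obstacle.} The genuinely delicate part is Step 1: making rigorous sense of the intrinsic parabolic normal map and estimating its Jacobian, in particular tracking how \eqref{cond-M-2} --- which controls the \emph{non-divergence} operator $L$ applied to $d_p$, not the Laplacian --- enters the Jacobian bound, handling the regularity of $\Phi$ together with the cut locus on the contact set, and choosing the geometry of the sliding paraboloids so that the contact set has good covering properties. The parabolic feature --- balancing spatial against temporal derivatives to obtain the exponent $n+1$ and to match the cylinder geometry $B_r\times(t_0-r^2,t_0]$ --- is the extra layer beyond Cabr\'e's elliptic argument, and the covering step (iii) likewise requires the parabolic, rather than elliptic, Krylov--Safonov lemma.
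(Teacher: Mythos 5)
Your proposal follows essentially the same route as the paper: the parabolic normal map and its Jacobian (Lemma \ref{lem-Jac-normal_map}), an ABP--Krylov--Tso estimate over the contact set using \eqref{cond-M-1}--\eqref{cond-M-2} with Calabi's upper-barrier trick at the cut locus (Lemma \ref{lem-abp-type}), barriers built from $d_{z_o}$ (Lemma \ref{lem-barrier}), iteration of the resulting measure estimate through a parabolic Calder\'on--Zygmund decomposition on the doubling space $M$ (Christ's dyadic cubes plus an elongation-in-time lemma), the power decay / weak Harnack estimate, and finally the sup bound obtained by applying the decay estimate to $C_1-C_2u$ together with a chaining argument over overlapping cylinders. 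The only cosmetic difference is that the paper's ABP-type lemma is stated directly as a measure estimate, $|B_R(z_o)|\,R^2\le C\int_{\{u\le M_\eta\}}\bigl\{\bigl(R^2\sL u+a_L+\Lambda+1\bigr)^+\bigr\}^{n+1}$, rather than as a sup bound, which is exactly the form you then feed into the measure-theoretic iteration.
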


\begin{thm}[Weak Harnack inequality]\label{lem-weak-harnack-I}
Suppose the conditions \eqref{cond-M-1}, \eqref{cond-M-2} hold.
Let $u$ be a nonnegative smooth function satisfying $\sL u \le f$ in $K_{2R} (x_0,4R^2)$, where $x_0\in M$ and $R>0$. 
Then, we have
\begin{equation*}
\left(\fint_{K_{R}(x_0,2R^2)} u^p \right)^{\frac{1}{p}}\leq C\left\{ \inf_{K_{R}(x_0,4R^2)}u+ R^2 \left(\fint_{K_{2R}(x_0,4R^2)} \abs{f^+}^{n+1} \right)^{\frac{1}{n+1}}\right\};\quad f^+:=\max(f,0),
\end{equation*} 
where $p\in (0,1)$ and $C$ are uniform constants depending only on $n,\lambda,\Lambda$ and $a_L$.
\end{thm}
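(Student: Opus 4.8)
The plan is to run the parabolic Krylov--Safonov scheme, using the Riemannian adaptations of Cabr\'e \cite{Ca} and Kim \cite{K} --- in which squared distance functions play the role of affine functions --- together with the parabolic normal map $\Phi$ from the introduction in place of the Krylov--Tso map. The first step is an intrinsic Krylov--Tso (parabolic ABP) estimate: for nonnegative $u$ with $\sL u\le f$ on a cylinder, a supremum-type quantity built from $u$ is bounded by a constant times $\bigl(\int_{E}\abs{f^+}^{n+1}\bigr)^{1/(n+1)}$, where $E$ is the lower contact set on which $\Phi$ is defined. The heart of this step is a pointwise bound of the Jacobian of $\Phi$ on $E$ by $C\,(\sL u)^{n+1}$; the hypotheses \eqref{cond-M-1} and \eqref{cond-M-2} are exactly what is needed to absorb the extra geometric terms --- roughly, the comparison of $\La(d_p^2)$ and $L(d_p^2)$ with their Euclidean values --- that arise when one differentiates $x\mapsto\exp_x\nabla u$ and $d(\cdot,\exp_\cdot\nabla u(\cdot))^2$.

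From this ABP estimate I would derive the basic growth lemma: there exist $M>1$, $\mu\in(0,1)$ and $\ve_0>0$, depending only on $n,\lambda,\Lambda,a_L$, such that if $u\ge 0$ and $\sL u\le f$ in $K_{2R}(x_0,4R^2)$ with $\inf_{K_R(x_0,4R^2)}u\le 1$ and $R^2\bigl(\fint_{K_{2R}(x_0,4R^2)}\abs{f^+}^{n+1}\bigr)^{1/(n+1)}\le\ve_0$, then
\[
\vol\bigl(\{u\le M\}\cap K_R(x_0,2R^2)\bigr)\ \ge\ \mu\,\vol\bigl(K_R(x_0,2R^2)\bigr).
\]
This is obtained by applying the ABP estimate to the difference of $u$ and a concave barrier comparable to a truncated squared distance function --- which exists precisely because of \eqref{cond-M-1}--\eqref{cond-M-2} --- and noting that the resulting contact set lies in $\{u\le M\}$ and, by the ABP inequality, has volume bounded below by a fixed fraction of $\vol(K_R(x_0,2R^2))$. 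Here volume doubling, a consequence of \eqref{cond-M-1} via Bishop--Gromov, is used to compare volumes of the balls that appear.

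Next I would iterate the growth lemma through a parabolic Calder\'on--Zygmund-type decomposition adapted to the manifold: volume doubling furnishes a dyadic-like family of cylinders, and the parabolic feature --- as in Krylov--Safonov --- is that a ``good'' cylinder at one generation forces $u$ to be large not only on that cylinder but on a cylinder stacked forward in time, whose bookkeeping inside $K_{2R}(x_0,4R^2)$ must be controlled. The outcome is the power decay
\[
\vol\bigl(\{u>M^{k}\}\cap K_R(x_0,2R^2)\bigr)\ \le\ C(1-\mu)^{k}\,\vol\bigl(K_R(x_0,2R^2)\bigr),\qquad k=1,2,\dots,
\]
equivalently $\vol\bigl(\{u>s\}\cap K_R(x_0,2R^2)\bigr)\le C s^{-\ve}\,\vol\bigl(K_R(x_0,2R^2)\bigr)$ with $\ve=-\log(1-\mu)/\log M$. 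For any $p\in(0,\ve)$ this bound integrates to $\bigl(\fint_{K_R(x_0,2R^2)}u^{p}\bigr)^{1/p}\le C$ under the normalization of the growth lemma, and undoing that normalization by the scaling $u\mapsto u/\bigl(\inf_{K_R(x_0,4R^2)}u+\ve_0^{-1}R^2(\fint_{K_{2R}(x_0,4R^2)}\abs{f^+}^{n+1})^{1/(n+1)}\bigr)$ yields the stated inequality.

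I expect the main obstacle to be the iteration step: a genuine dyadic grid is unavailable on a Riemannian manifold, so the parabolic stacked-cube argument of Krylov--Safonov must be rebuilt from volume doubling alone while preserving the forward-in-time stacking. The other delicate point is the pointwise Jacobian estimate for $\Phi$, where \eqref{cond-M-1} and \eqref{cond-M-2} have to be invoked sharply to dominate the curvature contributions.
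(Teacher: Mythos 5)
Your proposal follows essentially the same route as the paper: an intrinsic parabolic ABP estimate via the normal map $\Phi$ (the paper's Lemma \ref{lem-abp-type}), a barrier-based growth lemma (Lemmas \ref{lem-barrier} and \ref{lem-decay-est-1-step}), iteration through Christ's dyadic decomposition of a doubling metric space combined with a Euclidean dyadic splitting in time (Section \ref{sec-cz}), and power decay of super-level sets (Lemma \ref{lem-decay-est} and Corollaries \ref{cor-decay-est}, \ref{cor-decay-est-2}). The one step you compress is the last: the power decay is established on a fixed small dyadic cube near the center with the infimum taken over a specific forward-in-time cylinder, so passing to the full cylinders $K_R(x_0,2R^2)$ and $K_R(x_0,4R^2)$ still requires the finite covering and Harnack-chain argument of the paper's Theorem \ref{lem-weak-harnack}, not merely a direct integration of the distribution function.
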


\section{Preliminaries}\label{sec-pre}

Let  $(M, g)$ be a smooth, complete Riemannian manifold of dimension $n$,  where $g$ is the Riemannian metric  and  $\vol:=\vol_g$ is  the reference measure on $M$. We denote $\langle X,Y\rangle:=g(X,Y)$  and $|X|^2:=\langle X,X\rangle$ for $X,Y\in T_xM$,  where $T_x M$ is the tangent space at $x\in M$. Let $d(\cdot,\cdot)$ be the distance function on $M$.  For a given point $y\in M$,   $d_y(x)$  denotes  the distance function from $y$,  i.e., $d_y(x):=d(x,y)$.  

We recall  the exponential map $\exp: TM \to  M$.    If $\gamma_{x,X}:\R\to M$ is the geodesic        starting from $x\in M$ with velocity $X\in T_xM$,  then the exponential map  is defined by
$$\exp_x(X):=\gamma_{x,X}(1). $$
We note that the geodesic $\gamma_{x,X}$ is defined for all time since $M$ is complete.  Given two points $x,y \in M$,   there exists a unique minimizing geodesic $\exp_x(tX)$  joining  $x$ to $y$ with $y= \exp_x(X)$ and we will write $X= \exp_x^{-1}(y)$.  

 For $X\in T_xM$ with $|X|=1$,   we define $$t_c(X) := \sup\left\{ t >0 :  \exp_x(sX) \,\,\,\mbox{is minimizing between}\,\,\, x \,\,\,\mbox{and $\exp_x(t X) $}\right\}. $$   If $t_c(X)<+\infty$,  $\exp_x(t_c(X) X)$ is a cut point of $x$.   The cut locus of $x$ is defined as the set of all cut points of $x$,  that is, 
$$ \Cut(x):= \left\{\exp_x(t_c(X) X)  : X\in T_x M \,\,\,\mbox{with $|X|=1,\,\, t_c(X)<+\infty$}\right\}. $$  Define 
$$E_x := \left\{t X \in T_xM :  0\leq t<t_c(X),\,\, X\in T_x M \,\,\,\mbox{with $|X|=1$}\right\}\subset T_xM. $$ 
One can show that  
 $\Cut(x)= \exp_x(\p E_x), M=\exp_x(E_x)\cup \Cut(x)$ and $\exp_x : E_x \to \exp_x(E_x)$ is a diffeomorphism.  We note that $\Cut(x)$ is closed and has measure zero.  For any $x\notin \Cut(y)$ with $x\neq y$,   then $d_y$ is smooth at $x$ and the Gauss lemma implies that
$$\D d_y(x)=-\frac{\exp_x^{-1} (y)}{|\exp_x^{-1} (y)|}$$
and $$\D( d^2_y/2)(x)=-\exp_x^{-1} (y).$$

Let the Riemannain curvature tensor be defined by  
$$ R(X, Y)Z = \D_X \D_YZ- \D_Y\D_XZ  -\D_{ [X,Y ]}Z, $$
where $\D$ stands for the Levi-Civita connection. For a unit vector $e\in T_xM$,   $R(e)$ will denote the Ricci  transform of $T_xM$ into itself given by $R(e)X :=R(X, e)e$.

For  $u\in C^{\infty}(M)$,  the Hessian operator  $ D^2u(x) : T_xM \to T_xM$ is  defined by 
$$D^2u(x)\cdot X = \D_X \D u(x). $$ 

Let $M$ and $N$ be   Riemannian manifolds of dimension $n$ and  $\phi:M\to N$ be smooth.  
The Jacobian of $\phi$ is  the absolute value of determinant of the differential $d \phi$,  i.e., $$\Jac\phi(x):=|\det d\phi(x)|\quad\mbox{for $x\in M$}. $$  
 We  quote the following lemma from  Lemma 3.2 in \cite{Ca},  in which the Jacobian of the  map $x \mapsto\exp_x (\D v(x))$ is   computed explicitly.  
 \begin{lemma}[Cabr\'e]\label{lem-Ca-jac}
 Let $v$ be a smooth function in an open set $\Omega$ of $M$. Define  the map $\phi:\Omega\to M  $ by 
  $$\phi(x):=\exp_{x}\D v(x). $$
Let $x\in\Omega$ and   suppose that $\D v(x)\in E_x$. Set  $y:=\phi(x)$. Then we have 
\begin{align*}
\Jac \phi(x)
 =\Jac \exp_{x}(\D v(x)) \cdot \Big| \det D^2 \left(v+d^2_y/2\right) \left(x\right)\Big|,
\end{align*}
where    $\Jac\exp_x(\D v(x))$ denotes the Jacobian of $\exp_x$, a map from $T_xM$ to $M$,  at the point $\D v(x) \in T_xM$.  
\end{lemma}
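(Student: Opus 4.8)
The plan is to compute the differential $d\phi(x)$ and, using Jacobi fields along the minimizing geodesic from $x$ to $y$, to identify it with the composition of $d(\exp_x)_{\D v(x)}$ and the symmetric endomorphism $D^2(v+d_y^2/2)(x)$; multiplicativity of Jacobians under composition then yields the stated identity. First I would set up notation and an auxiliary map. Fix $x\in\Omega$ with $\D v(x)\in E_x$ and put $y=\phi(x)=\exp_x(\D v(x))$. Since $\exp_x$ restricts to a diffeomorphism on $E_x$, the point $y$ does not lie in $\Cut(x)$; by symmetry of the cut locus, $x\notin\Cut(y)$, so $d_y$ is smooth near $x$ and the Gauss lemma gives $\D(d_y^2/2)(x')=-\exp_{x'}^{-1}(y)$ for $x'$ near $x$; in particular $\exp_x^{-1}(y)=\D v(x)$, so $\sigma(r):=\exp_x(r\,\D v(x))$, $r\in[0,1]$, is the unique minimizing geodesic from $x$ to $y$. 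Introduce $\phi_0(x'):=\exp_{x'}\!\big({-}\D(d_y^2/2)(x')\big)=\exp_{x'}(\exp_{x'}^{-1}(y))=y$, a map that is constant near $x$; hence $d\phi_0(x)=0$.

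Next I would express the differential of a map of the form $x'\mapsto\exp_{x'}(\D w(x'))$ through Jacobi fields. Let $s\mapsto\gamma(s)$ be a curve with $\gamma(0)=x$, $\gamma'(0)=W$, and set $\Gamma(s,r):=\exp_{\gamma(s)}\!\big(r\,\D w(\gamma(s))\big)$. For each fixed $s$ the curve $r\mapsto\Gamma(s,r)$ is a geodesic, so $J(r):=\partial_s\Gamma(0,r)$ is a Jacobi field along $r\mapsto\Gamma(0,r)=\exp_x(r\,\D w(x))$; one reads off $J(0)=W$, $J(1)=\tfrac{d}{ds}\big|_{s=0}\exp_{\gamma(s)}(\D w(\gamma(s)))$, and, by the symmetry of mixed covariant derivatives (torsion-freeness), $\tfrac{D}{dr}J(0)=\tfrac{D}{ds}\big|_{s=0}\D w(\gamma(s))=\D_W\D w=D^2 w(x)\,W$. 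Taking $w=v$ (base geodesic $\sigma$, $J(1)=d\phi(x)\,W$) shows that $d\phi(x)\,W$ is the value at $r=1$ of the Jacobi field along $\sigma$ with initial data $\big(W,\,D^2v(x)\,W\big)$; taking $w=-d_y^2/2$ (its gradient at $x$ is $\exp_x^{-1}(y)=\D v(x)$, so again the base geodesic is $\sigma$, and $J(1)=d\phi_0(x)\,W$) shows that $d\phi_0(x)\,W$ is the value at $r=1$ of the Jacobi field along $\sigma$ with initial data $\big(W,\,{-}D^2(d_y^2/2)(x)\,W\big)$. I would also use the classical fact that $d(\exp_x)_{\D v(x)}\,\xi$ equals the value at $r=1$ of the Jacobi field along $\sigma$ with initial data $(0,\xi)$.

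Now I would subtract, using linearity of the Jacobi equation: since $d\phi_0(x)=0$,
\[
d\phi(x)\,W=d\phi(x)\,W-d\phi_0(x)\,W
\]
is the value at $r=1$ of the Jacobi field along $\sigma$ with initial data $\big(0,\,D^2(v+d_y^2/2)(x)\,W\big)$, i.e.\ $d\phi(x)\,W=d(\exp_x)_{\D v(x)}\big(D^2(v+d_y^2/2)(x)\,W\big)$. Thus $d\phi(x)=d(\exp_x)_{\D v(x)}\circ D^2(v+d_y^2/2)(x)$ as linear maps $T_xM\to T_yM$, and taking absolute values of determinants — equivalently, using multiplicativity of Jacobians under composition together with $\Jac\exp_x(\D v(x))=\big|\det d(\exp_x)_{\D v(x)}\big|$ — gives
\[
\Jac\phi(x)=\Jac\exp_x(\D v(x))\cdot\big|\det D^2(v+d_y^2/2)(x)\big|.
\]
The step demanding the most care is the Jacobi-field identification of the differentials of $\phi$, $\phi_0$ and $\exp_x$ — in particular the verification that the initial covariant derivative of the variation field is exactly the Hessian $D^2 w(x)\,W$, which is where torsion-freeness is used — together with the standard equivalence $y\in\Cut(x)\Leftrightarrow x\in\Cut(y)$, which guarantees that $d_y$ (hence $\phi_0$) is smooth in a neighborhood of $x$.
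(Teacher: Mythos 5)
Your proof is correct and follows essentially the same Jacobi-field argument as Cabr\'e's original proof of this lemma (which the paper only cites, not reproves), and which the paper itself mirrors in its proof of the parabolic analogue, Lemma \ref{lem-Jac-normal_map}: identify $d\phi(x)W$ and $d\phi_0(x)W$ as values at $r=1$ of Jacobi fields along $\sigma$ with the stated initial data, subtract using $d\phi_0(x)=0$, and read off the composition $d\phi(x)=d(\exp_x)_{\D v(x)}\circ D^2(v+d_y^2/2)(x)$. The only cosmetic caveat is that when $\D v(x)=0$ one should phrase the smoothness in terms of $d_y^2/2$ rather than $d_y$, which is all your argument actually uses.
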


Under the condition \eqref{cond-M-1}, we have the estimate for Jacobian of the exponential map and   Bishop's  volume comparison theorem as follows. We state the known results as a lemma.  The proof can be found  in  \cite[p. 286]{K} (see also \cite{L}).
\begin{lemma} \label{lem-jac-bishop}
Suppose that $M$ satisfies \eqref{cond-M-1}.  
\begin{enumerate}[(i)]
\item For any $x\in M$ and $X\in E_x, $$$\Jac\exp_x(X)= |\det d\exp_x(X)|\leq 1. $$
\item  (Bishop) For any $x\in M$,  $\vol(B_{R}(x))/R^{n}$ is nonincreasing with respect to $R$,  where $B_R(x)$ is a geodesic ball of radius $R$ centered at $x$.  Namely,
\begin{equation*}
\frac{\vol(B_R(x))}{\vol(B_r(x))}\leq\frac{R^{n}}{r^{n}}\quad\text{if $\,0<r<R$.}
\end{equation*}
In particular, $M $ satisfies the volume doubling property; i.e., $ \vol(B_{2R}(x))\leq 2^n\vol(B_R(x))$.
\end{enumerate}
\end{lemma}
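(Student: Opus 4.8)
Both assertions are classical comparison-geometry facts --- (ii) is Bishop's inequality, here deduced from the weak hypothesis \eqref{cond-M-1} rather than from a pointwise Ricci bound --- so the plan is to record the standard argument based on the volume density of geodesic polar coordinates. Fix $x\in M$ and a unit vector $v\in T_xM$, write $\gamma(t):=\exp_x(tv)$, and let $\{e_1,\dots,e_{n-1}\}$ be an orthonormal basis of $v^{\perp}\subset T_xM$. For $t\in(0,t_c(v))$ the segment $\gamma|_{[0,t]}$ carries no conjugate points, so the Jacobi fields $J_1,\dots,J_{n-1}$ along $\gamma$ with $J_i(0)=0$ and (covariant) derivative $J_i'(0)=e_i$ are linearly independent and perpendicular to $\dot\gamma$, and
\[
h(t,v):=\bigl|\det\bigl(J_1(t),\dots,J_{n-1}(t)\bigr)\bigr|
\]
(determinant in a parallel orthonormal frame along $\gamma$) is strictly positive on $(0,t_c(v))$ with $h(t,v)/t^{\,n-1}\to1$ as $t\to0^+$. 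By the Gauss lemma and the identity $d\exp_x(tv)(e_i)=t^{-1}J_i(t)$ one gets
\[
\Jac\exp_x(tv)=\frac{h(t,v)}{t^{\,n-1}}\qquad\text{for }0<t<t_c(v),
\]
and, in geodesic polar coordinates, $d\vol=h(t,v)\,dt\,d\sigma(v)$, where $d\sigma$ is the canonical measure on the unit sphere $S_x\subset T_xM$.

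To prove (i) I would then invoke the standard formula relating the Laplacian of the distance function to this density: for $0<t<t_c(v)$,
\[
\Delta d_x(\gamma(t))=\frac{\p_t h(t,v)}{h(t,v)}=\frac{d}{dt}\log h(t,v),
\]
which holds because $\D d_x=\dot\gamma$ along $\gamma$ and $\Delta d_x(\gamma(t))$ is the trace of the shape operator of the geodesic sphere $\p B_t(x)$ at $\gamma(t)$ (equivalently, it follows from the matrix Riccati equation satisfied by that shape operator along $\gamma$). Hypothesis \eqref{cond-M-1} now says $\frac{d}{dt}\log h(t,v)\le\frac{n-1}{t}=\frac{d}{dt}\log t^{\,n-1}$, hence $\frac{d}{dt}\log\bigl(h(t,v)/t^{\,n-1}\bigr)\le0$ on $(0,t_c(v))$. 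Thus $t\mapsto h(t,v)/t^{\,n-1}=\Jac\exp_x(tv)$ is nonincreasing and, having limit $1$ at $t=0^+$, is $\le1$ throughout; writing $X\in E_x$ as $X=|X|\,v$ and taking $t=|X|$ gives $\Jac\exp_x(X)\le1$.

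For (ii), since $\exp_x\colon E_x\to\exp_x(E_x)$ is a diffeomorphism and $M\setminus\exp_x(E_x)=\Cut(x)$ has measure zero, the polar-coordinate formula yields, for every $\rho>0$,
\[
\vol(B_\rho(x))=\int_{S_x}\int_0^{\min(\rho,\,t_c(v))}h(t,v)\,dt\,d\sigma(v).
\]
Fix $v$ and extend $h(\cdot,v)$ by zero on $[t_c(v),\infty)$; by the previous step $t\mapsto h(t,v)/t^{\,n-1}$ is nonincreasing on $(0,\infty)$. I would then use the elementary fact that for $f\ge0$, $g>0$ with $f/g$ nonincreasing, $\rho\mapsto\bigl(\int_0^\rho f\bigr)\big/\bigl(\int_0^\rho g\bigr)$ is nonincreasing (for $\rho_1<\rho_2$ the numerator of the difference of the two ratios is $\int_0^{\rho_1}\!\int_{\rho_1}^{\rho_2}g(s)g(t)\bigl[(f/g)(t)-(f/g)(s)\bigr]\,dt\,ds\le0$). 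Applying this with $g(t)=t^{\,n-1}$, so $\int_0^\rho g=\rho^n/n$, shows $\rho^{-n}\int_0^{\min(\rho,t_c(v))}h(t,v)\,dt$ is nonincreasing in $\rho$ for each $v$; integrating over $S_x$ gives that $\rho\mapsto\vol(B_\rho(x))/\rho^n$ is nonincreasing. Taking $\rho=r$ and $\rho=R$ gives the stated ratio bound, and $\rho=R$, $\rho=2R$ gives $\vol(B_{2R}(x))\le2^n\vol(B_R(x))$.

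The argument above is largely routine; the one point needing genuine care is the identity $\Delta d_x(\gamma(t))=\frac{d}{dt}\log h(t,v)$ --- i.e.\ correctly linking the Laplacian of the distance function with the Jacobian of the exponential map through Jacobi fields (or the Riccati equation for geodesic spheres) --- together with the bookkeeping of the behaviour as $t\to0^+$ and the disposal of the null cut locus; everything else reduces to the one-variable monotonicity lemma for ratios of integrals. This is the classical Bishop--Gromov argument, whose details appear in \cite{K} and \cite{L}.
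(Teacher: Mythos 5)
Your argument is correct, and it is precisely the classical Bishop--Gromov computation that the paper itself does not reproduce but delegates to the references \cite{K} (p.~286) and \cite{L}: one expresses $\Jac\exp_x$ through the Jacobi determinant $h(t,v)$, uses the identity $\Delta d_x(\gamma(t))=\tfrac{d}{dt}\log h(t,v)$ to convert the hypothesis \eqref{cond-M-1} into monotonicity of $h(t,v)/t^{n-1}$ (giving (i) since the ratio tends to $1$ at $t=0^+$), and then integrates in polar coordinates together with the elementary ratio-monotonicity lemma (and the fact that $\Cut(x)$ is null) to get (ii). So your proof is correct and takes the same route the paper is pointing to; the only thing the paper adds is the citation rather than the computation.
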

 

The following is the area formula, which follows easily from  the area formula in Euclidean space and  a partition of unity. 
\begin{lemma}[Area formula]
 For any smooth function $\phi:M\times \R\to M\times \R$ and any measurable set $E\subset M\times\R$, we have
\begin{equation*}
\int_E \Jac\phi (x,t)dV(x,t)=\int_{M\times\R} \cH^0[E\cap\phi^{-1}(y,s)] dV(y,s),\end{equation*}
where $\cH^0$ is the counting measure.
\end{lemma}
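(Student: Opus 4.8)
The asserted identity is local and both of its sides are countably additive in $E$, so the plan is to localize it to coordinate charts and quote the (weighted) Euclidean area formula. Since $M\times\R$ is second countable, fix a countable atlas and refine it to two countable Borel partitions $\{E_i\}_{i}$ and $\{V_j\}_{j}$ of $M\times\R$, with $E_i$ contained in the domain of a chart $\varphi_i\colon U_i\to\R^{n+1}$ and $V_j$ contained in the domain of a chart $\psi_j\colon W_j\to\R^{n+1}$. Both sides of the claimed equality are countably additive in $E$ (the left side trivially, the right side because $E\mapsto\cH^0(E\cap\phi^{-1}(y,s))$ is a counting measure for each fixed $(y,s)$), while the right side is also countably additive when the target is split along the $V_j$, since $\{\phi^{-1}(V_j)\}_j$ partitions $M\times\R$. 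Hence it suffices to prove, for each pair $(i,j)$ and with $F:=E\cap E_i\cap\phi^{-1}(V_j)$,
\[
\int_F\Jac\phi\,dV=\int_{V_j}\cH^0\bigl(E\cap E_i\cap\phi^{-1}(y,s)\bigr)\,dV(y,s),
\]
because summing over $i$ and $j$ — using that $\phi^{-1}(y,s)\subset\phi^{-1}(V_j)$ exactly when $(y,s)\in V_j$, so that $F\cap\phi^{-1}(y,s)$ equals $E\cap E_i\cap\phi^{-1}(y,s)$ when $(y,s)\in V_j$ and is empty otherwise — recovers the lemma after one application of monotone convergence.

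To prove the localized identity, set $\widetilde F:=\varphi_i(F)\subset\R^{n+1}$ and $\widetilde\phi:=\psi_j\circ\phi\circ\varphi_i^{-1}$, a smooth map on the open set $\varphi_i\bigl(U_i\cap\phi^{-1}(W_j)\bigr)\supset\widetilde F$. Write the Riemannian volume in the two charts as $(\varphi_i^{-1})^{*}dV=J_i\,dz$ and $(\psi_j^{-1})^{*}dV=\widetilde J_j\,dw$ with $J_i,\widetilde J_j$ smooth and positive. Since the absolute Jacobian (the volume-scaling factor of a linear map between inner product spaces) is multiplicative under composition, and $|\det d(\varphi_i^{-1})_z|=J_i(z)$, $|\det d(\psi_j)_{\psi_j^{-1}(w)}|=\widetilde J_j(w)^{-1}$, one obtains the pointwise relation
\[
|\det d\widetilde\phi(z)|=\frac{J_i(z)}{\widetilde J_j(\widetilde\phi(z))}\,\Jac\phi\bigl(\varphi_i^{-1}(z)\bigr).
\]

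Now apply the Euclidean area formula for $C^1$ maps, in its weighted form, to $\widetilde\phi$ with weight $g(z):=\mathbf{1}_{\widetilde F}(z)\,\widetilde J_j(\widetilde\phi(z))$. On the left, the weight cancels the factor $\widetilde J_j$ and the change of variables $z=\varphi_i(x,t)$ converts the integrand into $\Jac\phi\,dV$, so the left side equals $\int_F\Jac\phi\,dV$. On the right, for $z\in\widetilde\phi^{-1}(w)$ one has $\widetilde J_j(\widetilde\phi(z))=\widetilde J_j(w)$, so the inner sum is $\widetilde J_j(w)\,\cH^0(\widetilde F\cap\widetilde\phi^{-1}(w))$; since $\varphi_i$ is a bijection it preserves $\cH^0$, and the change of variables $w=\psi_j(y,s)$ turns $\widetilde J_j(w)\,dw$ into $dV(y,s)$, so the right side equals $\int_{V_j}\cH^0(E\cap E_i\cap\phi^{-1}(y,s))\,dV(y,s)$. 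Equating the two sides gives the localized identity, and the proof is complete.

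I do not anticipate a genuine difficulty here: the lemma is essentially bookkeeping. The two points that need attention are (i) the cancellation of the chart densities $J_i,\widetilde J_j$, which is exactly why the \emph{weighted} version of the Euclidean area formula is the convenient tool; and (ii) the fact that $\phi$ need not carry a single chart into a single chart, which forces the simultaneous use of partitions of the source \emph{and} of the target rather than a single partition of unity. Measurability of $(y,s)\mapsto\cH^0(E\cap\phi^{-1}(y,s))$ is inherited from the Euclidean statement through the countable decomposition.
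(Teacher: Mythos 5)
Your proof is correct and follows exactly the route the paper indicates: the paper gives no details, stating only that the lemma ``follows easily from the area formula in Euclidean space and a partition of unity,'' and your argument is a careful implementation of that localization (using countable Borel partitions of source and target in place of a partition of unity, which is an inessential variant). The chart-density cancellation via the weighted Euclidean area formula and the fiber bookkeeping across the two partitions are both handled correctly.
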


\begin{notation}
Let us summarize  the notations and definitions that  will be used.
\begin{itemize}
\item Let $r>0, \rho>0$,  $z_o\in M$ and $t_o\in\R$.  We denote   
$$K_{r,\,\rho}(z_o,t_o):=B_r(z_o)\times(t_o-\rho,t_o], $$  
where $B_r(z_o)$ is a geodesic ball of radius $r$ centered at $z_o$.
\item We denote  $K_{r}(z_o,t_o):= K_{r, \,r^2}(z_o,t_o)$.
\item  We say that  a constant $C$ is uniform if $C$ depends only on $n, \lambda,\Lambda$ and $a_L$.
\item We denote $\fint_Q f:=\frac{1}{\vol(Q)}\int_Q f.$
\item We denote $|Q|:=\vol(Q)$. 
\item We denote  the trace by $\tr$.
\end{itemize}
\end{notation}
\section{Key lemma}\label{sec-key-lemma}

In this section, we obtain  Aleksandrov-Bakelman-Pucci-Krylov-Tso type estimate (Lemma \ref{lem-abp-type})  for parabolic Harnack inequalities.
   We begin  with  direct computation of the Jacobian of the parabolic normal map $\Phi$ below, which is a parabolic analogue of  Lemma \ref{lem-Ca-jac}.
 
\begin{lemma}\label{lem-Jac-normal_map}
 Let $v$ be a smooth function in an open set $K$ of $M\times\R$. Define  the map $\phi:K\to M  $ by 
  $$\phi(x,t):=\exp_{x}\D_{x}v(x,t)$$
and  the map $\Phi:K\ra M\times \R$  by
$$\Phi(x,t):=\left(\phi(x,t), -\frac{1}{2}d\left(x, \phi(x,t)\right)^2-v(x,t)\right). $$
Let $(x,t)\in K$ and assume that $\D_x v(x,t)\in E_x.$ Set $y:=\phi(x,t).$ Then 
\begin{align*}
\Jac \Phi(x,t)
=\Jac \exp_{x}(\D_x v(x,t)) \cdot \Big|(-v_t)\det \left( D_x^2 \left(v+d^2_y/2\right)\right)\Big|,
\end{align*}
where $\Jac\exp_x(\D _xv(x,t))$ denotes the Jacobian of $\exp_x$ at the point $\D_x v(x,t) \in T_xM$.  
\end{lemma}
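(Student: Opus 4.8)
The plan is to compute the differential $d\Phi_{(x,t)}$ directly from the chain rule and then reduce the resulting Jacobian to the one already computed in Lemma~\ref{lem-Ca-jac}. Put $p:=\D_x v(x,t)\in E_x$ and $y:=\phi(x,t)=\exp_x p$. Since $\D_x v(x,t)\in E_x$, the point $y$ does not lie in $\Cut(x)$, hence also $x\notin\Cut(y)$ because the cut relation is symmetric; consequently $\psi(a,b):=\tfrac12 d(a,b)^2$ is smooth in a neighborhood of $(x,y)$ in $M\times M$ (the degenerate case $p=0$, where $y=x$, is included, $\psi$ being smooth near the diagonal). Writing $\Phi=(\phi,h)$ with
\[
h(x,t):=-\psi\bigl(x,\phi(x,t)\bigr)-v(x,t),
\]
the first observation is that, with the time variable frozen, $\phi(\cdot,t)$ is exactly the map of Lemma~\ref{lem-Ca-jac} associated with the function $v(\cdot,t)$, so
\[
\Jac\phi(\cdot,t)(x)=\Jac\exp_x\bigl(\D_x v(x,t)\bigr)\cdot\bigl|\det D^2_x(v+d^2_y/2)\bigr|.
\]

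Next I would differentiate $h$ along an arbitrary curve $r\mapsto(x(r),t(r))$ in $K$ through $(x,t)$ with velocity $(\xi,\sigma)\in T_xM\oplus\R$. The chain rule gives
\[
dh_{(x,t)}(\xi,\sigma)=-\langle\nabla_1\psi(x,y),\xi\rangle-\langle\nabla_2\psi(x,y),d\phi_{(x,t)}(\xi,\sigma)\rangle-\langle\D_x v,\xi\rangle-v_t\,\sigma,
\]
where $\nabla_1\psi,\nabla_2\psi$ are the gradients of $\psi$ in its first and second slots. By the Gauss-lemma identity $\D(d^2_z/2)(\cdot)=-\exp_\cdot^{-1}(z)$ recalled in Section~\ref{sec-pre}, applied in each slot,
\[
\nabla_1\psi(x,y)=\D(\tfrac12 d^2_y)(x)=-\exp_x^{-1}(y)=-p=-\D_x v(x,t),\qquad \zeta:=\nabla_2\psi(x,y)=\D(\tfrac12 d^2_x)(y)=-\exp_y^{-1}(x)\in T_yM .
\]
Hence the first and third terms above cancel identically, and we are left with the clean formula
\[
d\Phi_{(x,t)}(\xi,\sigma)=\Bigl(\,d\phi_{(x,t)}(\xi,\sigma)\,,\ -\langle\zeta,d\phi_{(x,t)}(\xi,\sigma)\rangle-v_t\,\sigma\,\Bigr).
\]

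Finally I would read off the Jacobian. Post-composing $d\Phi_{(x,t)}$ with the automorphism $(\eta,\mu)\mapsto(\eta,\mu+\langle\zeta,\eta\rangle)$ of $T_yM\oplus\R$, which has determinant $1$, turns it into $(\xi,\sigma)\mapsto\bigl(d\phi_{(x,t)}(\xi,\sigma),-v_t\,\sigma\bigr)$; relative to the splittings $T_xM\oplus\R$ and $T_yM\oplus\R$ this is block upper triangular, with diagonal blocks the differential of $\phi(\cdot,t)$ at $x$ (a map $T_xM\to T_yM$) and multiplication by $-v_t$. Therefore $\det d\Phi_{(x,t)}=(-v_t)\det d\phi(\cdot,t)(x)$, so that $\Jac\Phi(x,t)=|v_t|\cdot\Jac\phi(\cdot,t)(x)$, and combining this with the first paragraph gives the asserted identity. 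Note that no nondegeneracy of $d\phi(\cdot,t)(x)$ is needed: the displayed formula for $d\Phi_{(x,t)}$ and the triangular reduction hold unconditionally. I expect the one place demanding care to be the bookkeeping of base points in the chain rule for $h$ — namely that the contribution of the \emph{first} slot of $\psi$ is precisely what kills the differential of $-v$ — but this becomes transparent once $\D(d^2_z/2)=-\exp^{-1}(z)$ is invoked in both slots of $\psi$.
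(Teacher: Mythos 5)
Your proof is correct, and it takes a genuinely different route from the paper's. The paper re-runs the Jacobi-field construction behind Cabr\'e's Lemma~\ref{lem-Ca-jac} in the space-time setting: it builds the variation $\Pi(s,\tau)$, identifies $J(s)=\partial_\tau\Pi|_{\tau=0}$ as a Jacobi field along $X(s)$, splits it as $J=\tilde J_{\xi,\sigma}+J_{\xi,\sigma}$ with prescribed endpoint data, reads $d\Phi(x,t)\cdot(\xi,\sigma)$ off from $d\exp_{(x,t)}$ applied to $D_s\tilde J_{\xi,\sigma}(0)$, and only at the very end triangularizes the resulting block matrix by row operations. You instead take the stated conclusion of Lemma~\ref{lem-Ca-jac} as a black box for the spatial Jacobian $\Jac\phi(\cdot,t)(x)$ and get the last row of $d\Phi$ by an ordinary chain-rule computation on the smooth function $\psi=\tfrac12 d^2$, with the Gauss-lemma identity $\D(d_y^2/2)(x)=-\exp_x^{-1}(y)=-\D_x v(x,t)$ cancelling the first-slot contribution against $-\D_x v$; your unipotent shear is exactly the paper's row operation. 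Your route buys economy (no Jacobi fields, no explicit formula for $d\Phi$ beyond what the determinant requires) and is arguably more faithful to what Lemma~\ref{lem-Ca-jac} actually asserts, since the paper's proof quietly invokes the formula for the differential $d\phi$ from the \emph{proof} of Cabr\'e's lemma rather than its stated Jacobian identity; what the paper's longer computation buys is the explicit expression for $d\Phi(x,t)\cdot(\xi,\sigma)$, which is not used elsewhere. The two points needing care in your argument --- smoothness of $\psi$ near $(x,y)$, coming from $\D_x v(x,t)\in E_x$ and the symmetry of the cut relation, and the degenerate case $y=x$ --- are both addressed.
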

\begin{proof}
We may assume that $\D_x v(x,t)\neq 0$, which is equivalent to $x\neq y$.
Let $(\xi,\sigma)\in T_xM\times \R\bs \{(0,0)\}$ and let $\gamma=(\gamma_1,\gamma_2)$ be the geodesic  with  $\gamma (0)=(x,t)$ and $\gamma'(0)=(\xi,\sigma)$. We note that $\gamma_1(\tau)=\exp_x\tau\xi$ and $\gamma_2(\tau)=t+\sigma\tau$.  
Set $$Y(s,\tau):=\exp_{\gamma_1(\tau)}\left[ s\D_x v(\gamma(\tau))\right]. $$
Consider the family of geodesics   (in the parameter $s$)
$$\Pi (s,\tau):=\left(Y(s,\tau), \gamma_2(\tau)-s\left\{\frac12 d\left(\gamma_1(\tau), \phi(\gamma(\tau))\right)^2+\, v(\gamma (\tau))+\gamma_2(\tau)\right\}\right)$$
that joins $\Pi(0,\tau)=\gamma(\tau)$  to  $\Pi(1,\tau)=\Phi(\gamma (\tau))$. 
Then we define
$$J(s):=\frac{\partial}{\partial \tau}\Big|_{\tau=0}\Pi (s,\tau), $$
which is a { Jacobi field} along $$X(s):=\left(\exp_x(s\D_x v(x,t)),t-{\color{black}s}\left\{\frac12{ d\left(x,\phi(x,t)\right)^2}+ \,v(x,t)+t\right\}\right). $$
Simple computation says that
$${ J(0)=(\xi,  \sigma)}\quad\mbox{and}\quad J(1)=\frac{\partial}{\partial\tau}\Big|_{\tau=0}\Phi(\gamma(\tau))=d\Phi{(x,t)}\cdot(\xi,\sigma). $$
We also have 
\begin{align*}
D_sJ(0)&=\left(D_x^2v(x,t)\,\xi+\sigma\D_xv_t(x,t)\right., \quad-\sigma v_t(x,t)-\sigma\\
&\left.-\left\langle\D_x\left(d_x^2/2\right)(y),\,d\exp_x(\D_xv(x,t))\cdot\left(D^2_x\left(v+d_y^2/2\right)(x,t)\cdot\xi+\sigma\D_xv_t(x,t)\right)\right\rangle\right).
\end{align*}
In fact,  we have
\begin{align*}
&D_sJ(0)=D_s\frac{\partial\Pi}{\partial\tau}\Big|_{s=0,\tau=0} =D_\tau\frac{\partial\Pi}{\partial s}\Big|_{s=0,\tau=0}\\&=D_\tau\Big|_{\tau=0}\left(\D_xv(\gamma(\tau)),-\frac{1}{2}d(\gamma_1(\tau),\phi(\gamma(\tau)))^2-v(\gamma(\tau))-\gamma_2(\tau)\right)\\
&=\left(D^2_xv(x,t) \cdot\xi+\sigma\D_xv_t(x,t), \right.\\
&\,\,-\left\langle\D_x(d_{y}^2/2)(x),\,\xi\right\rangle-\left.\left\langle\D_x(d_{x}^2/2)(\phi(x,t)),\frac{\partial}{\partial\tau}\phi(\gamma(\tau))\Big|_{\tau=0} \right\rangle
-\left\langle\D_xv(x,t) ,\,\xi\right\rangle-\sigma v_t-\sigma\right)\\
&=\left(D^2_xv\cdot\xi+\sigma\D_xv_t,-\left\langle\D_x(d_{x}^2/2)(\phi(x,t)),\frac{\partial}{\partial\tau}\phi(\gamma(\tau))\Big|_{\tau=0} \right\rangle-\sigma v_t-\sigma\right),
\end{align*}  since  $\D_x(d^2_y/2)(x)= -\exp^{-1}_x(y)=-\D_xv(x,t)$. Then we use Lemma \ref{lem-Ca-jac}  to obtain
$$ \frac{\partial}{\partial\tau}\phi(\gamma(\tau))\Big|_{\tau=0}=d\exp_x(\D_xv(x,t))\cdot\left(D_x^2\left(v+ d_{y}^2/2\right)(x,t)\cdot\xi+\sigma\D_xv_t(x,t)\right). $$

On the other hand, consider the Jacobi field $J_{\xi,\sigma}$ along $X(s)$ satisfying 
$$J_{\xi,\sigma}(0)=(\xi,\sigma)\quad\mbox{and}\quad J_{\xi,\sigma}(1)=(0,0). $$ Then we can check that
 $$ J_{\xi,\sigma}(s)=\frac{\partial}{\partial\tau}\Psi\Big|_{\tau=0}\quad \mbox{and}\quad D_sJ_{\xi,\sigma}(0)=\left(-D^2_x\left( d_{y}^2/2\right)(x)\cdot\xi,-\sigma\right), $$where 
 $$\Psi(s,\tau):=\left(\exp_{\gamma_1(\tau)}s\exp^{-1}_{\gamma_1(\tau)}\phi(x,t),\gamma_2(\tau)-s\left\{\frac{1}{2}d(x,\phi(x,t))^2+v(x,t)+\gamma_2(\tau)\right\}\right).$$  (We refer \cite[Lemma 3.2]{Ca}  for the proof.)

Define $\tilde J_{\xi,\sigma}:=J-J_{\xi,\sigma}$. The Jacobi field $\tilde J_{\xi,\sigma}$ along $X(s)$ satisfying
$$\tilde J_{\xi,\sigma}(0)=(0,0)\quad\mbox{and}\quad D_s\tilde J_{\xi,\sigma}(0)=D_sJ(0)-D_sJ_{\xi,\sigma}(0)$$ is  written by
$$d\exp_{(x,t)}(sX'(0))\cdot\left(sD_s\tilde J_{\xi,\sigma}(0)\right). $$
Therefore, 
we have
\begin{align*}
J(1)=\tilde J_{\xi,\sigma}(1)=  d \exp_{(x,t)}\left(\D_x v(x,t),-\frac{1}{2}d(x,y)^2-v(x,t)-t\right)\cdot\left(D_sJ(0)-D_sJ_{\xi,\sigma}(0)\right), 
\end{align*}
which means 
\begin{align*} 
&d\Phi(x,t) \cdot(\xi,\sigma)=d\exp_{(x,t)}\left(\D_x v(x,t),-\frac{1}{2}d(x,y)^2-v(x,t)-t\right)\cdot
\\&\left(D_x^2\left(v+ d_y^2/2\right)(x,t)\cdot\xi+\sigma \D_x v_t(x,t)\,,\right.\\& \left.-\sigma v_t -\left\langle\D_x\left(d_x^2/2\right)(y),\,d\exp_x(\D_xv(x,t))\cdot\left(D^2_x\left(v+ d_y^2/2\right)(x,t)\cdot\xi+\sigma\D_xv_t(x,t)\right)\right\rangle\right)\\
&=\left( d\exp_{x}\left(\D_x v(x,t) \right)\cdot \left(D_x^2\left(v+d_y^2/2\right)(x,t)\cdot\xi+\sigma \D_x v_t(x,t)\right)\,,\right.\\& \left.-\sigma v_t -\left\langle\D_x\left(d_x^2/2\right)(y),\,d\exp_x(\D_xv(x,t))\cdot\left(D^2_x\left(v+ d_y^2/2\right)(x,t)\cdot\xi+\sigma\D_xv_t(x,t)\right)\right\rangle\right).
\end{align*} 
To calculate the Jacobian of $\Phi$,  we introduce an orthonormal basis $\{e_1, \cdots, e_n\}$ of $T_xM$ and an orthonormal basis $\{\overline e_1,\cdots,\overline e_n\}$ of $T_yM=T_{\exp_x\D v(x,t)}M$. By setting for $i,j=1,\cdots,n$, 
\begin{align*}
A_{ij}&:=\left\langle \overline e_i, \,d\exp_{x}\left(\D_x v(x,t) \right)\cdot \left(D_x^2\left(v+ d_y^2/2\right)(x,t)e_j\right)\right\rangle,\\
b_i&:=\left\langle \overline e_i, \,d\exp_{x}\left(\D_x v(x,t) \right) \cdot\D v_t(x,t) \right\rangle,\\
c_i&:=\left\langle \overline e_i, \,\D_x\left(d_x^2/2\right)(y)\right\rangle,
\end{align*} the Jacobian matrix of  $\Phi$ at $(x,t)$   is 
\begin{equation*}\displaystyle
 \left(
\begin{array}{cc}
 A_{ij} &   b_i\\
-c_{k}A_{kj} & -v_t-b_kc_k
\end{array}
\right).
\end{equation*}
Lastly, we use  the row operations  to deduce that
\begin{equation*}\displaystyle
\Jac\Phi(x,t)=\left|\det \left(
\begin{array}{cc}
 A_{ij} &   b_i\\
0 & -v_t
\end{array}
\right)\right|=\left|(-v_t)\det (A_{ij})\right|.
\end{equation*}
This completes the proof.
\end{proof}

The following lemma will play a key role to estimate sublevel sets of $u$ in Lemma \ref{lem-decay-est-1-step}  and then to prove   a decay estimate of   the distribution function of $u$ in Lemma \ref{lem-decay-est}.
This ABP-type lemma corresponds to \cite[Lemma 4.1]{Ca}.  
\begin{lemma}\label{lem-abp-type} Suppose that $M$ satisfies the condition 
 \eqref{cond-M-2}.
Let $z_o\in M$,   $R>0$,  and $0<\eta<1$.
Let $u$ be a smooth function in $K_{\alpha_1 R,\, \alpha_2R^2}(z_o,0) \subset M\times\R$ satisfying 
\begin{equation}\label{cond-abp}
u\geq 0\quad\mbox{in}\quad K_{\alpha_1R,\, \alpha_2R^2}(z_o,0)\bs K_{\beta_1R,\,\beta_2R^2}(z_o,0) \quad\mbox{
and} \quad\inf_{K_{2R}(z_o,0)}u\leq1,\end{equation}
where $\alpha_1:=\frac{11}{\eta}$, $\alpha_2:=4+\eta^2+\frac{\eta^4}{4}$, $\beta_1:=\frac{9}{\eta}$, and $\beta_2:=4+\eta^2$.  
 Then we have 
\begin{equation}\label{eq-abp-type}
|B_R(z_o)|\cdot R^2\leq C(\eta,n,\lambda)\int_{\{u\leq M_\eta\}\cap K_{\beta_1R,\,\beta_2R^2}(z_o,0)}\left\{\left(R^2\sL u+ a_{L}+\Lambda+1 \right)^+ \right\}^{n+1}
\end{equation}
where the constant  $M_\eta>0$ depends only on $\eta>0$ and $C(\eta,n,\lambda)>0$ depends only on $\eta, n$ and $\lambda.$
\end{lemma}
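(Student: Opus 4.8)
The plan is to carry Cabr\'e's proof of \cite[Lemma~4.1]{Ca} over to the parabolic situation, replacing the elliptic normal map by the parabolic one $\Phi$ and the elliptic Jacobian identity by Lemma~\ref{lem-Jac-normal_map}, and using Lemma~\ref{lem-jac-bishop} (so condition \eqref{cond-M-1}, in force throughout, is needed) together with the bound $L(d_y^2/2)=d_yLd_y+\langle A\D d_y,\D d_y\rangle\le a_L+\Lambda$, which comes from \eqref{cond-M-2} and $|\D d_y|=1$. By parabolic rescaling we may keep $R$ fixed; fix $\eta\in(0,1)$ and let all constants below depend only on $\eta,n,\lambda$. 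Choose a small $\rho=\rho(\eta)>0$ and a large $c=c(\eta)>0$, set $M_\eta:=1+8c+\tfrac12(2+\rho)^2$, and put
\[
v(x,t):=R^2u(x,t)+\frac{c}{2R^2}\,t^2 .
\]
For a vertex $y\in B_{\rho R}(z_o)$ and a height $h\in\bR$ consider the space--time paraboloid $P_{y,h}(x):=-\tfrac12 d(x,y)^2+h$, and let $\Sigma$ be the parabolic lower--contact set of $v$ relative to this family: $(x_0,t_0)\in\Sigma$ if there is $y\in B_{\rho R}(z_o)$ such that the paraboloid $P_{y,h}$ with $h=v(x_0,t_0)+\tfrac12 d(x_0,y)^2$ satisfies $P_{y,h}(x)\le v(x,s)$ for all $(x,s)\in\overline{B_{\beta_1 R}(z_o)}\times[-\beta_2R^2,t_0]$ (with equality at $(x_0,t_0)$).

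\emph{Step 1 (localization of the contact set).} On the lateral part $\partial B_{\beta_1 R}(z_o)\times[-\beta_2R^2,0]$ and the bottom $\overline{B_{\beta_1 R}(z_o)}\times\{-\beta_2R^2\}$ one has $u\ge0$, because these sets lie in $K_{\alpha_1R,\,\alpha_2R^2}(z_o,0)\setminus K_{\beta_1R,\,\beta_2R^2}(z_o,0)$ --- and this is exactly what the inequalities $\beta_1<\alpha_1$, $\beta_2<\alpha_2$ (coming from $\beta_1=9/\eta<11/\eta=\alpha_1$ and $\beta_2=4+\eta^2<4+\eta^2+\eta^4/4=\alpha_2$) buy us. Hence there $v+\tfrac12 d(\cdot,y)^2\ge\tfrac12 d(\cdot,y)^2+\tfrac{c}{2R^2}t^2$, which is $\ge\tfrac12(\beta_1-\rho)^2R^2$ on the lateral part and $\ge\tfrac{c}{2}\beta_2^2R^2$ on the bottom. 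On the other hand, since $\inf_{K_{2R}(z_o,0)}u\le1$ and $K_{2R}(z_o,0)\subset\overline{B_{\beta_1 R}(z_o)}\times[-\beta_2R^2,0]$, there is a point at which $v+\tfrac12 d(\cdot,y)^2\le M_\eta R^2$; choosing $c$ large and $\rho$ small (this is where the particular values $\beta_1=9/\eta$, $\beta_2=4+\eta^2$ are exploited) makes $M_\eta R^2$ strictly smaller than both $\tfrac12(\beta_1-\rho)^2R^2$ and $\tfrac{c}{2}\beta_2^2R^2$. Consequently, for every $y\in B_{\rho R}(z_o)$ and every $h\in[h_0(y),M_\eta R^2]$, where $h_0(y):=\min[v+\tfrac12 d(\cdot,y)^2]$, the paraboloid $P_{y,h}$ stays strictly below $v$ on the lateral and bottom boundary, so going forward in time it first meets $v$ at an interior point $(x_0,t_0)=(x_0(y,h),t_0(y,h))\in\Sigma$ with $t_0\in(-\beta_2R^2,0]$. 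At such a point the semiconcavity of $d(\cdot,y)^2$ forces $x_0\notin\Cut(y)$, so $\D_xv(x_0,t_0)=\exp_{x_0}^{-1}(y)\in E_{x_0}$, whence $\exp_{x_0}\D_xv(x_0,t_0)=y$ and $\Phi(x_0,t_0)=(y,-h)$; moreover $D_x^2(v+d_y^2/2)(x_0,t_0)\ge0$, $v_t(x_0,t_0)\le0$ (since $v(x_0,s)\ge v(x_0,t_0)$ for $s\le t_0$), and $R^2u(x_0,t_0)\le v(x_0,t_0)\le h\le M_\eta R^2$, so $\Sigma$ lies in $\{u\le M_\eta\}\cap K_{\beta_1R,\,\beta_2R^2}(z_o,0)$ up to a null set.

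\emph{Step 2 (Jacobian estimate).} At a contact point, Lemma~\ref{lem-Jac-normal_map} and Lemma~\ref{lem-jac-bishop}(i) give $\Jac\Phi(x_0,t_0)\le(-v_t)\,\bigl|\det D_x^2(v+d_y^2/2)\bigr|$. Writing $S:=\tr\bigl(A_{x_0,t_0}D_x^2(v+d_y^2/2)\bigr)\in\bigl[0,\,R^2Lu+a_L+\Lambda\bigr]$, one has $\bigl|\det D_x^2(v+d_y^2/2)\bigr|\le\lambda^{-n}(S/n)^n$ and $-v_t=R^2\sL u-R^2Lu+\tfrac{c}{R^2}|t_0|\le\bigl(R^2\sL u+a_L+\Lambda+\tfrac{c}{R^2}|t_0|\bigr)-S$; maximizing $(P-S)(S/n)^n$ over $S\ge0$ by the arithmetic--geometric mean inequality and using $|t_0|\le\beta_2R^2$ yields, at contact points,
\[
\Jac\Phi(x_0,t_0)\ \le\ \frac{\lambda^{-n}}{(n+1)^{n+1}}\bigl(R^2\sL u(x_0,t_0)+a_L+\Lambda+c\beta_2\bigr)^{n+1}\ \le\ C(\eta,n,\lambda)\Bigl\{\bigl(R^2\sL u+a_L+\Lambda+1\bigr)^+\Bigr\}^{n+1},
\]
the last bound because at a contact point $R^2\sL u+a_L+\Lambda\ge-c\beta_2$, so the extra offset $c\beta_2=c\beta_2(\eta)$ only affects the constant.

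\emph{Step 3 (conclusion).} Since $\Sigma$ is contained in an open set on which $\D_xv\in E_x$ and $\Phi$ is therefore smooth, the area formula and Step~2 give
\[
\int_{B_{\rho R}(z_o)}\bigl(M_\eta R^2-h_0(y)\bigr)^+\,dy\ \le\ \bigl|\Phi(\Sigma)\bigr|\ \le\ \int_\Sigma\Jac\Phi\ \le\ C(\eta,n,\lambda)\int_{\{u\le M_\eta\}\cap K_{\beta_1R,\,\beta_2R^2}(z_o,0)}\Bigl\{\bigl(R^2\sL u+a_L+\Lambda+1\bigr)^+\Bigr\}^{n+1},
\]
using in the first inequality that Step~1 realizes every pair $(y,-h)$ with $y\in B_{\rho R}(z_o)$, $h_0(y)\le h\le M_\eta R^2$ in $\Phi(\Sigma)$. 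Finally one shows that the left side is $\ge c(\eta)\,|B_{\rho R}(z_o)|\,R^2$: since $\inf_{K_{2R}}u\le1$, a point $(x^*,t^*)$ with $u(x^*,t^*)$ close to that infimum gives $h_0(y)\le R^2u(x^*,t^*)+\tfrac{c}{2R^2}(t^*)^2+\tfrac12 d(x^*,y)^2$, which is bounded away from $M_\eta R^2$ by a multiple of $R^2$ after the choices of $c$ and $\rho$ (here one uses $|t^*|<4R$ and $d(x^*,y)<(2+\rho)R$), so $M_\eta R^2-h_0(y)\gtrsim R^2$. Combined with Bishop's comparison $|B_{\rho R}(z_o)|\ge\rho^n|B_R(z_o)|$ from Lemma~\ref{lem-jac-bishop}(ii), this yields $|B_R(z_o)|\cdot R^2\le C(\eta,n,\lambda)\int_{\{u\le M_\eta\}\cap K_{\beta_1R,\,\beta_2R^2}(z_o,0)}\{(R^2\sL u+a_L+\Lambda+1)^+\}^{n+1}$, which is \eqref{eq-abp-type}.

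The step I expect to be the main obstacle is \emph{Step~1 together with the measure lower bound in Step~3}: choosing the auxiliary coefficients $\rho(\eta),c(\eta)$ and checking, with the precise values of $\alpha_1,\alpha_2,\beta_1,\beta_2$, that the slid paraboloids necessarily touch $v$ at interior points of $K_{\beta_1R,\,\beta_2R^2}$ where $u\le M_\eta$, and that the image of the contact set under $\Phi$ genuinely has $(n+1)$--dimensional measure $\gtrsim|B_R(z_o)|\,R^2$ --- i.e.\ that the extra ``$t^2$'' term in $v$ produces a height-range of size $\gtrsim R^2$. This is the point where the time variable genuinely enters and where the argument departs from, and is more delicate than, the elliptic \cite[Lemma~4.1]{Ca}; the Jacobian computation in Step~2 is then a routine consequence of Lemma~\ref{lem-Jac-normal_map} and the arithmetic--geometric mean inequality.
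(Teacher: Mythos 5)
Your overall architecture is the same as the paper's: a parabolic normal map, a contact set generated by sliding concave paraboloids $-\tfrac12 d_y^2+h$ forward in time, the area formula, and the Jacobian bound via Lemma \ref{lem-Jac-normal_map}, the arithmetic--geometric mean inequality and \eqref{cond-M-2} (plus Lemma \ref{lem-jac-bishop}, hence \eqref{cond-M-1}). The cosmetic differences --- a quadratic drift $\tfrac{c}{2R^2}t^2$ instead of the paper's linear drift $-C_\eta t$, and vertices in $B_{\rho R}(z_o)$ instead of $B_R(z_o)$ --- are admissible in principle. However, there are two genuine gaps.

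The serious one is that you dispose of the cut locus in a single clause: ``the semiconcavity of $d(\cdot,y)^2$ forces $x_0\notin\Cut(y)$.'' This is not a proof. A smooth lower support at $x_0$ does exclude contact points joined to $y$ by two distinct minimizing geodesics, but it does not obviously exclude $x_0$ being a first \emph{conjugate} cut point of $y$; semiconcavity (an upper Hessian bound) is irrelevant here --- what one would need is the nontrivial fact that $d_y^2$ admits no second-order support from below at conjugate cut points, which neither you nor the paper proves. At such a point $\D_xv(x_0,t_0)$ lies only in $\overline{E_{x_0}}$, $d_y^2$ need not be twice differentiable, and Lemma \ref{lem-Jac-normal_map} does not apply as stated, so your Step 2 estimate and the smoothness of $\Phi$ near $\Sigma$ in Step 3 are unestablished exactly where they are needed. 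The paper spends roughly half of its proof on this case: it replaces $y$ by $y_\sigma=\exp_x\D_x(\tfrac\sigma2 R^2u)(x,t)$, uses Calabi's upper-barrier trick together with the Hessian comparison theorem to bound $D^2 d_{y_\sigma}$ uniformly for $\sigma\in[1/2,1)$, and passes to the limit $\sigma\uparrow1$ in $\Jac\Phi_\sigma$. Some such argument must be supplied.

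The second gap is quantitative. With $M_\eta:=1+8c+\tfrac12(2+\rho)^2$ and $|t^*|$ possibly as large as $4R^2$, your estimate gives only $h_0(y)\le M_\eta R^2$ with no gap, so the height range $[h_0(y),M_\eta R^2]$ may have length $o(R^2)$ and the lower bound $\int_{B_{\rho R}}(M_\eta R^2-h_0(y))^+\,dy\ge c(\eta)|B_{\rho R}(z_o)|R^2$ fails as stated. You must enlarge $M_\eta$ by a definite amount and then verify the binding boundary inequality $\tfrac c2\beta_2^2>M_\eta$, i.e. $4c\eta^2+\tfrac{c\eta^4}{2}>2+\tfrac12(2+\rho)^2$, which forces $c$ of order $\eta^{-2}$ (the analogue of the paper's $C_\eta=6/\eta^2$); you never check this, and it is precisely where $\beta_2=4+\eta^2$ versus the time-length $4R^2$ of $K_{2R}$ enters. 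The paper's linear drift avoids the issue entirely: the running minimum of $w_{\overline y}$ sweeps monotonically through an interval of length exactly $R^2$, independent of $\overline y$ and $u$. Finally, your replacement of $+c\beta_2$ by $+1$ inside the positive part is not justified by the reason you give, since $(X+c\beta_2)^+\le C(X+1)^+$ fails for $-c\beta_2\le X<-1$; the paper's own passage from \eqref{eq-abp-jac} to \eqref{eq-abp-type} glosses over the identical point, and both are repaired by keeping the $\eta$-dependent constant in the statement and adjusting property (iii) of Lemma \ref{lem-barrier} accordingly.
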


\begin{proof}

\begin{figure}[b] 
  \centering
    \includegraphics[width=0.95\textwidth]{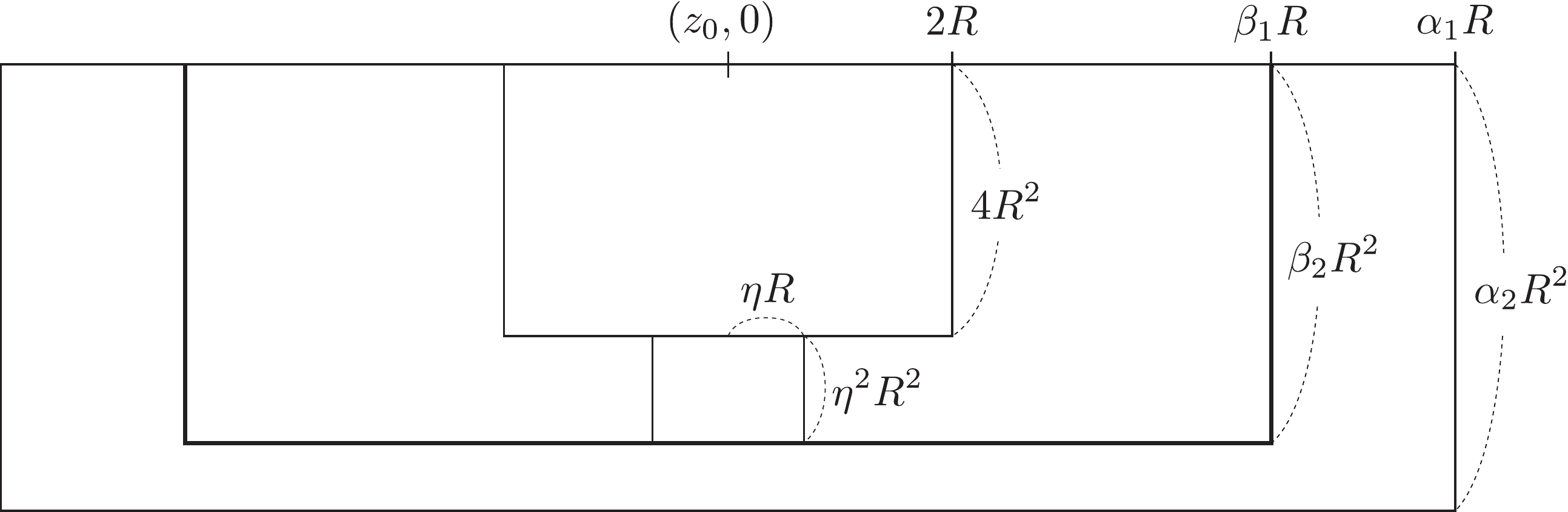}
      \caption{  $\alpha_1:=\frac{11}{\eta}, \alpha_2:=4+\eta^2+\frac{\eta^4}{4}, \beta_1:=\frac{9}{\eta}$,   $\beta_2:=4+\eta^2$.}\label{boxes}
\end{figure}
For any $\overline y\in B_R(z_o)$,  we define 
$$w_{\overline y}(x,t):=\frac{1}{2}R^2u(x,t)+\frac{1}{2}d_{\overline y}^2(x)-C_{\eta}t,\,\,\,\,C_\eta:=\frac{6}{\eta^2}.$$
From the assumption \eqref{cond-abp}, it is easy to check that 
$$\inf_{K_{2R}(z_o,0)}w_{\overline y}\leq
\left( 5+\frac{24}{\eta^2}\right)R^2=:A_\eta R^2, $$
and $$w_{\overline y}\geq\left(6+\frac{24}{\eta^2}\right)R^2=(A_\eta +1)R^2\quad\mbox{on}\,\,\,K_{\alpha_1R,\,\alpha_2R^2}(z_o,0)\bs K_{\beta_1R,\,\beta_2R^2}(z_o,0). $$
From the above observation, for any $\displaystyle (\overline y, \overline h)\in B_R(z_o)\times\left(A_\eta R^2,(A_\eta +1)R^2\right)$,  we can find a time  $\displaystyle \overline t\in\left(-\beta_2R^2,0\right)$ such that 
$$\overline h=\inf_{B_{\beta_1R}(z_o)\times(-\beta_2R^2,\overline t]}w_{\overline y}(z,\tau)=w_{\overline y}\left(\overline x,\overline t\right), $$
where the infimum is achieved at an interior point  $\overline x$ of $ B_{\beta_1R}(z_o)$. 
By the same argument as in  \cite[pp. 637-638]{Ca}, we have the following relation:
$$\overline y=\exp_x\D_x\left(\frac{1}{2}R^2u\right)\left(\overline x, \overline t\right). $$

Now, we consider the map $\Phi:K_{\alpha_1R,\,\alpha_2R^2}(z_o,0)\to M\times\R$ (with $v(x,t)=\frac{1}{2}R^2u(x,t)-C_\eta t$ in Lemma \ref{lem-Jac-normal_map}) 
 defined as
\[
\Phi(x,t):=\left(\exp_x\D_x\left(\frac{1}{2}R^2u\right)(x,t), -\frac{1}{2}d\left(x,\exp_x\D_x\left(\frac{1}{2}R^2u\right)(x,t)\right)^2-\frac{1}{2}R^2u(x,t)+C_\eta t\right).
\]
Define  a set  
\begin{align*}
E:=\left\{(x,t)\in K_{\beta_1R,\,\beta_2R^2}(z_o,0)\,:\, \exists y\in B_{R}(z_o)\quad\mbox{s.t.}\quad \displaystyle w_{y}(x,t)=\inf_{B_{\beta_1R}(z_o)\times(-\beta_2R^2,t]}w_y\leq(A_\eta+1 )R^2  \,\right\}.\end{align*}
The set $E$ is a subset of the contact set in $K_{\beta_1R,\,\beta_2R^2}(z_o,0)$ that contains a point $(x,t),$ where a concave paraboloid $-\frac{1}{2} d_{y}^2(x)+C_\eta t+C $ (for some $C$) touches $\frac{1}{2}R^2u$ from below. 
 Thus we have proved that for any $(y,s)\in B_R(z_o)\times(-(A_\eta +1)R^2,-A_\eta R^2)$,  there is at least one $(x,t)\in E$ such that
$(y,s)=\Phi(x,t)$,  namely,   $$B_R(z_o)\times\left(-(A_\eta +1)R^2,-A_\eta R^2\right)\subset \Phi(E). $$ So Area formula gives
\begin{align}\label{eq-abp-area}
|B_R(z_o)|\cdot  R^2\leq \int_{M\times\R} \cH^0\left[E\cap\Phi^{-1}(y,s)\right] dV(y,s)= \int_E\Jac \Phi (x,t)dV(x,t).
\end{align}
We notice that  for $(x,t)\in E$ and $y\in B_R(z_o)$,   $w_y(x,t)=\frac{1}{2}R^2u(x,t)+\frac{1}{2}d_y^2(x)-C_\eta t\leq (A_\eta +1)R^2$ and hence $u(x,t)\leq 2(A_\eta +1)=:M_\eta $ for $(x,t)\in E$.

Lastly, we claim that for $(x,t)\in E$, 
\begin{equation}\label{eq-abp-jac}
\Jac\Phi(x,t)\leq \frac{1}{(n+1)^{n+1}\lambda^n}\left\{\left(\frac{1}{2}R^2\sL u(x,t)+ a_{L}+\Lambda+C_\eta \right)^+ \right\}^{n+1}.\end{equation}
Fix  $(x,t)\in E$ and  $y\in B_R(z_o)$ to satisfy $$\displaystyle w_{y}(x,t)=\inf_{B_{\beta_1R}(z_o)\times(-\beta_2R^2,t]}w_y. $$ We recall  that $y=\exp_x\D_x\left(\frac{1}{2}R^2u\right)(x,t)$ (see \cite[pp. 637-638]{Ca}). 

 If $x$ is not a cut point of $y$,  then 
Lemma \ref{lem-Jac-normal_map} (with $v(x,t)=\frac{1}{2}R^2u(x,t)-C_\eta t$)  and Lemma \ref{lem-jac-bishop} (i)  imply that 
$$\Jac\Phi(x,t)\leq  \left|\left(-\frac{1}{2}R^2u_t+C_\eta\right)\det \left( D_x^2 \left(\frac{1}{2}R^2u+\frac{1}{2}d^2_y\right)\right)(x,t)\right| . $$
Since the minimum of $w_y$ in $B_{\beta_1R}(z_o)\times(-\beta_2R^2,t]$ is achieved at $(x,t)$,  we have 
$$0\leq D^2_x w_y(x,t)=D_x^2 \left(\frac{1}{2}R^2u+\frac{1}{2}d^2_y\right) \,\,\, \mbox{and }\,\,\,0\geq \partial_tw_y(x,t)=\frac{1}{2}R^2u_t-C_\eta, $$ 
where   $D^2_x w_y(x,t)\geq 0$ means that the Hessian of $w_y$ at $(x,t)$ is positive semidefinite. 
Therefore, by using the geometric and arithmetic means inequality, we get
\begin{align*}
\Jac\Phi(x,t)&\leq \left(-\frac{1}{2}R^2u_t+C_\eta\right)\det \left( D_x^2 \left(\frac{1}{2}R^2u+\frac{1}{2}d^2_y\right)\right)(x,t) \\
&\leq \frac{1}{\lambda^n}\left(-\frac{1}{2}R^2u_t+C_\eta\right)\det A_{x,t}\det \left( D_x^2\left(\frac{1}{2}R^2u+\frac{1}{2}d^2_y\right)\right) \\
&\leq \frac{1}{(n+1)^{n+1}\lambda^n}\left\{ \tr\left( A_{x,t}\circ D_x^2 \left(\frac{1}{2}R^2u+\frac{1}{2}d^2_y\right)\right)-\frac{1}{2}R^2u_t+C_\eta \right\}^{n+1} \\
&= \frac{1}{(n+1)^{n+1}\lambda^n}\left\{ \frac{1}{2}R^2\sL u(x,t) + L\left[\frac{1}{2}d^2_y\right]+C_\eta\right\}^{n+1} \\
&\leq \frac{1}{(n+1)^{n+1}\lambda^n}\left\{ \frac{1}{2}R^2\sL u+a_L+\Lambda+C_\eta\right\}^{n+1}\\
 &=\frac{1}{(n+1)^{n+1}\lambda^n}\left\{ \left(\frac{1}{2}R^2\sL u +a_L+\Lambda+C_\eta\right)^+\right\}^{n+1}, 
\end{align*} 
where we used
$$L\left[d_y^2/2\right]=d_y L d_y +\left<A_{x,t}\D d_y,\D d_y\right>\leq a_L+\Lambda|\D d_y|^2. $$

When $x$ is a cut point of $y, $  we make use of upper barrier technique due to Calabi \cite{Cal}.  
 Since $y=\exp_x\D_x \left(\frac{1}{2}R^2u\right)(x,t)$,  $x$ is not a cut point of $y_{\sigma}:=\phi_\sigma(x,t):=\exp_x\D_x\left( \frac{\sigma}{2}R^2u\right)(x,t)$ for $0\leq\sigma<1$.  Now we consider 
 $$\Phi_{\sigma}(z,\tau):=\left(\phi_\sigma(z,\tau), - \frac{\sigma}{2}R^2u(z,\tau)-\frac{1}{2}d\left(z, \phi_{\sigma}(z,\tau)\right)^2+  C_\eta \tau\right)$$
 instead of $\Phi$ since $\displaystyle\Jac\Phi(x,t)=\lim_{\sigma\uparrow1}\Jac\Phi_{\sigma}(x,t)$. As before, we have 
$$\Jac\Phi_\sigma(x,t)\leq  \left|\left(-\frac{\sigma}{2}R^2u_t+C_\eta\right)\det \left( D_x^2 \left( \frac{\sigma}{2}R^2u+\frac{1}{2}d^2_{y_\sigma}\right)\right)(x,t)\right| . $$
We note that 
\begin{align*}
&\liminf_{\sigma\uparrow1} \left|\left(-\frac{\sigma}{2}R^2u_t+C_\eta\right)\det \left( D_x^2 \left(\frac{\sigma }{2}R^2u+\frac{1}{2}d^2_{y_\sigma}\right)\right)(x,t)\right|\\ 
=&\liminf_{\sigma\uparrow1} \left|\left(- \p_t w_{y_\sigma}(x,t) \right)\det \left(  D_x^2  w_{y_\sigma}\right)(x,t)\right|
\end{align*}
for $w_{y_\sigma}(z,\tau):= \frac{1}{2}R^2u(z,\tau)+\frac{1}{2}d_{y_\sigma}^2(z)-C_\eta\tau$. 
According to the triangle inequality, we have
\begin{align*}
w_y(z,\tau)&\leq \frac{1}{2}R^2u(z,\tau)+\frac{1}{2}\left(d_{y_{\sigma}}(z)+d(y_{\sigma}, y)\right)^2-C_\eta\tau\\
&=w_{y_{\sigma}}(z,\tau)+d(y_{\sigma},y)d_{y_{\sigma}}(z)+\frac{1}{2}d(y_{\sigma},y)^2,
\end{align*}
where the equality holds at $(z,\tau)=(x,t)$. Since   $w_{y}$ has the minimum at $(x,t)$  in $B_{\beta_1R}(z_o)\times(-\beta_2R^2,t]$,   the  minimum of    $ w_{y_{\sigma}}(z,\tau)+d(y_{\sigma},y)d_{y_{\sigma}}(z) $ (in $B_{\beta_1R}(z_o)\times(-\beta_2R^2,t]$) is also    achieved at $(x,t)$,  that implies that
$$D^2_x\left(w_{y_{\sigma}}+d(y_{\sigma},y)d_{y_{\sigma}} \right)(x,t)\geq 0,\quad \p_tw_{y_{\sigma}}(x,t)\leq 0. $$ 
To bound $D^2y_{\sigma}(x)$ uniformly in $\sigma\in[1/2,1)$,  we recall  the Hessian comparison theorem (see \cite{S},\cite{SY}): Let $-k^2$ ($k>0$) be a lower bound of sectional curvature along the minimal geodesic joining $x$ and $y$.    Then for $0<\sigma<1$, 
$$D^2d_{y_\sigma} (x)\leq k \coth(kd_{y_\sigma}(x)) \mathrm{Id}$$ and hence we find a  constant $N>0$ independent of $\sigma$ such that $$D^2d_{y_\sigma}(x)\leq N \mathrm{Id}\quad\mbox{for}\,\,\,\frac{1}{2}\leq \sigma<1. $$ 
Following the above argument, for $\frac{1}{2}\leq \sigma<1$,  we obtain
\begin{align*}
0&\leq \liminf_{\sigma\uparrow1} \left(- \p_t w_{y_\sigma}(x,t) \right)\det \left(  D_x^2  w_{y_\sigma}+d(y_{\sigma},y)D^2 d_{y_{\sigma}} \right)(x,t)\\
&\leq\liminf_{\sigma\uparrow1}
\left(- \p_t w_{y_\sigma}(x,t) \right)\det \left(  D_x^2  w_{y_\sigma}+d(y_{\sigma},y)N\mathrm{Id}\right)(x,t)\\
&\leq \liminf_{\sigma\uparrow1}\frac{1}{(n+1)^{n+1}\lambda^n}\left\{ \frac{1}{2}R^2\sL u+a_L+\Lambda+C_\eta + d(y_\sigma,y)n\Lambda N \right\}^{n+1}\\
&\leq  \frac{1}{(n+1)^{n+1}\lambda^n}\left\{ \left(\frac{1}{2}R^2\sL u +a_L+\Lambda+C_\eta  \right)^+\right\}^{n+1}.
\end{align*}
Then we deduce  that  $$\Jac\Phi(x,t)\leq\frac{1}{(n+1)^{n+1}\lambda^n}\left\{\left( \frac{1}{2}R^2\sL u(x,t)+a_L+\Lambda+C_\eta\right)^+\right\}^{n+1}$$ since 
\begin{align*}\liminf_{\sigma\uparrow1} \left| \det \left(  D_x^2  w_{y_\sigma}\right)(x,t)\right|&=\liminf_{\sigma\uparrow1} \left| \det \left(  D_x^2  w_{y_\sigma}+d(y_{\sigma},y)N\mathrm{Id} \right)(x,t)\right|\\
&=\liminf_{\sigma\uparrow1}  \det \left(  D_x^2  w_{y_\sigma}+d(y_{\sigma},y)N\mathrm{Id} \right)(x,t).\end{align*} We conclude that \eqref{eq-abp-jac} is true for  $(x,t)\in E.$
Therefore the estimate  \eqref{eq-abp-type} follows from \eqref{eq-abp-area}  
since $E\subset \left\{u\leq M_\eta\right\}\cap K_{\beta_1R,\, \beta_2R^2}(z_o,0)$. 
\end{proof}

\section{Barrier functions}\label{sec-barrier}

We modify the barrier function of \cite{W}  to construct a barrier function in the Riemannian case.  First, we fix some constants that will be used frequently (see Figure \ref{boxes}); for a given $0<\eta<1$, 
$$\alpha_1:=\frac{11}{\eta},\,\alpha_2:=4+\eta^2+\frac{\eta^4}{4},\, \beta_1:=\frac{9}{\eta}\,\,\,\,\mbox{and}\,\,\,\beta_2:=4+\eta^2. $$   
\begin{lemma}\label{lem-barrier} Suppose that $M$ satisfies the condition 
 \eqref{cond-M-2}. 
 Let $z_o\in M$, $R>0$ and $0<\eta<1$. There exists a continuous function $v_{\eta}(x,t)$ in $\displaystyle K_{\alpha_1R,\,\alpha_2R^2}(z_o,\beta_2R^2) 
 $,  which is smooth in $\left(M\bs \Cut (z_o)\right)\cap K_{\alpha_1R,\,\alpha_2R^2}(z_o,\beta_2R^2)$ such that  
\begin{enumerate}[(i)]
\item $v_{\eta}(x,t)\geq 0$ in $K_{\alpha_1R,\,\alpha_2R^2}(z_o, \beta_2R^2)\,\bs\,  K_{\beta_1R,\,\beta_2R^2} (z_o,\beta_2R^2)$, 
\item $v_{\eta}(x,t)\leq 0$ in $K_{2R}(z_o,\beta_2R^2)$,
\item {  $R^2\sL v_{\eta}+a_{L}+\Lambda+1\leq 0\ $} a.e. in $K_{\beta_1R, \,\beta_2R^2}(z_o, \beta_2R^2)\bs  K_{\frac{\eta}{2} R} (z_o, \frac{\eta^2}{4}R^2), $
\item $R^2\sL  v_{\eta} \leq C_{\eta}\quad a.e. $ in $K_{\beta_1R,\, \beta_2R^2}(z_o, \beta_2R^2)$,
\item  $v_{\eta}(x,t)\geq -C_{\eta}$ in $K_{\alpha_1R,\,\alpha_2R^2}(z_o,  \beta_2R^2)$.
\end{enumerate}
Here, the constant $C_{\eta}>0$  depends only on $\eta, n,\lambda,\Lambda, a_L$ ( independent of  $R$ and $z_o$ ).
\end{lemma}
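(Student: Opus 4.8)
The plan is to follow the barrier of Wang \cite{W}, replacing the Euclidean distance by $d_{z_o}$ and keeping track of the cut locus. Concretely I would seek $v_\eta$ in the self-similar form
\[
v_\eta(x,t)=\Psi\!\left(\frac{d_{z_o}(x)^2}{R^2},\ \frac{t}{R^2}\right),
\]
where $\Psi=\Psi(s,\tau)$ is an explicit elementary function (a suitably rescaled combination of rational and exponential pieces in $s$ and $\tau$), \emph{smooth}, \emph{nondecreasing} and \emph{concave} in the spatial variable $s$. The monotonicity and concavity in $s$ are what make the construction work with only the one-sided hypothesis \eqref{cond-M-2}: using the Gauss lemma ($|\D d_{z_o}|=1$, $\D(d_{z_o}^2/2)=-\exp^{-1}_{\cdot}(z_o)$), a direct computation, and the bounds $\lambda\le\langle A_{x,t}\D d_{z_o},\D d_{z_o}\rangle\le\Lambda$ and $L[d_{z_o}^2/2]=d_{z_o}L d_{z_o}+\langle A_{x,t}\D d_{z_o},\D d_{z_o}\rangle\le a_L+\Lambda$ (which, by \eqref{cond-M-2}, holds for $x\notin\Cut(z_o)\cup\{z_o\}$, and also at $x=z_o$, where $L[d_{z_o}^2/2]=\tr A_{z_o,t}$), one obtains, at every point where $d_{z_o}$ is smooth and with $s=d_{z_o}^2/R^2,\ \tau=t/R^2$,
\[
R^2\sL v_\eta=4s\,\Psi_{ss}\,\langle A_{x,t}\D d_{z_o},\D d_{z_o}\rangle+\Psi_s\,L[d_{z_o}^2]-\Psi_\tau\ \le\ 4\lambda s\,\Psi_{ss}+2(a_L+\Lambda)\,\Psi_s-\Psi_\tau .
\]
Only \emph{upper} bounds on $L d_{z_o}$ enter, which is essential since $L d_{z_o}$ has no lower bound ($\to-\infty$ near conjugate cut points); requiring $\Psi_s\ge0$ is precisely what lets us pair the sign of $\Psi_s$ with the one-sided bound on $L[d_{z_o}^2]$, and requiring $\Psi$ smooth in $s$ makes $v_\eta$ smooth at $z_o$.

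With this reduction, (iii) and (iv) become one-variable inequalities for $\Psi$ on, respectively, $\{0\le s<\beta_1^2,\ 0<\tau\le\beta_2\}\setminus\{0\le s<\eta^2/4,\ 0<\tau\le\eta^2/4\}$ and $\{0\le s<\beta_1^2,\ 0<\tau\le\beta_2\}$, while (i), (ii), (v) become $\Psi\ge0$ on $\{\tau\le0\}\cup\{s\ge\beta_1^2\}$, $\Psi\le0$ on $\{s<4,\ \eta^2<\tau\le\beta_2\}$, and $\Psi\ge-C_\eta$, over the box $\{0\le s<\alpha_1^2,\ -\eta^4/4<\tau\le\beta_2\}$; here $\beta_1^2=81/\eta^2,\ \beta_2=4+\eta^2,\ \alpha_1^2=121/\eta^2$, and one checks that the choices of $\alpha_i,\beta_i$ in Figure~\ref{boxes} are exactly what makes these compatible. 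Properties (i), (ii), (v) are then routine comparisons for the explicit $\Psi$. The real work is (iii). Away from $s=0$ one creates the required strict negativity from the spatial concavity term $4\lambda s\,\Psi_{ss}$; but at $x=z_o$ that term vanishes and $R^2\sL v_\eta|_{z_o}=2\,\tr(A_{z_o,t})\,\Psi_s(0,\tau)-\Psi_\tau(0,\tau)$, so there the inequality $R^2\sL v_\eta+a_L+\Lambda+1\le0$ for $\tau\in(\eta^2/4,\beta_2]$ can only be driven by a large positive $\partial_\tau\Psi(0,\cdot)$ on that interval. On the other hand (i) and (ii) force $\Psi(0,\cdot)$ to pass from $\ge0$ at $\tau\le0$ to $\le0$ at $\tau>\eta^2$, hence to dip below $0$; this is consistent only because the inner cylinder $K_{\frac{\eta}{2}R}(z_o,\tfrac{\eta^2}{4}R^2)$ — where merely $\sL v_\eta\le C_\eta$ is asked — is deleted in (iii), so the steep drop of $\Psi$ can be placed there (and, at even earlier times $\tau\le0$, where (iv) imposes nothing). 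I would therefore design the $\tau$-profile so that near $s=0$: $\Psi(0,\cdot)$ decreases steeply across $\{-\eta^4/4<\tau\le\eta^2/4\}$ from a positive value to well below $0$, then increases with $\partial_\tau\Psi(0,\cdot)\ge a_L+\Lambda+1$ on $\{\eta^2/4<\tau\le\beta_2\}$ while staying $\le0$; the constant $C_\eta$ (bounding both the depth of the dip, via $\int\partial_\tau\Psi$, and the bad-cylinder bound in (iv)) then comes out of order $(a_L+\Lambda)\eta^{-2}$, consistent with the stated dependence. Choosing the remaining parameters in $\Psi$ large in terms of $n,\lambda,\Lambda,a_L,\eta$ turns the displayed one-variable bound into (iii) and (iv).

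This $v_\eta$ is continuous everywhere (as $d_{z_o}$ and $\Psi$ are) and smooth on $M\setminus\Cut(z_o)$ (as $d_{z_o}^2$ is), so the differential inequalities (iii), (iv), being required only a.e. and $\Cut(z_o)$ having measure zero, are exactly the smooth-point bounds just established; no further argument at the cut locus is needed for the statement. (If one also wants them pointwise in the viscosity sense across $\Cut(z_o)$ — convenient when feeding the barrier into a comparison argument — this follows from Calabi's upper-barrier trick exactly as in the proof of Lemma~\ref{lem-abp-type}: at a cut point $x$ of $z_o$, with $z_\sigma$ the point at distance $\sigma\,d_{z_o}(x)$ along a minimizing geodesic from $z_o$ to $x$, one has $d_{z_o}\le d(z_o,z_\sigma)+d_{z_\sigma}$ with equality at $x$, $d_{z_\sigma}$ smooth near $x$, and — since $\Psi$ is nondecreasing in $s$ — the smooth function $\Psi\big((d(z_o,z_\sigma)+d_{z_\sigma})^2/R^2,\ t/R^2\big)$ touches $v_\eta$ from above at $x$; apply the smooth-point estimate to it, now using \eqref{cond-M-2} at $z_\sigma$, and pass to the limit in $\sigma$.) I expect the main obstacle to be not the manifold geometry but the one-variable design of $\Psi$: reconciling the strict supersolution inequality (iii) — which at $z_o$ must be produced purely by $\partial_t v_\eta$ — with the sign constraints (i)–(ii), which force $v_\eta$ to be negative in the core and to dip below $0$ exactly around $K_{\frac{\eta}{2}R}(z_o,\tfrac{\eta^2}{4}R^2)$.
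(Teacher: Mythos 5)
Your proposal is correct and follows essentially the same route as the paper: compose Wang's barrier with the self-similar variable $s=d_{z_o}^2/R^2$, exploit monotonicity (and concavity) in $s$ so that only the one-sided bound \eqref{cond-M-2} on $L d_{z_o}$ enters, excise the inner cylinder where the profile is singular and only the weak bound (iv) is required, and obtain the strict negativity in (iii) from the time-derivative contribution (the paper implements this by adding the linear term $(a_L+\Lambda+1)t/R^2$ to Wang's $h$). The paper's proof is simply the explicit instantiation of your $\Psi$, namely $\psi(s,\tau)=-Ae^{-m\tau}(1-s/\beta_1^2)^l(4\pi\tau)^{-n/2}e^{-\alpha s/\tau}+(a_L+\Lambda+1)\tau$ with $\alpha,l,m,A$ chosen as you indicate.
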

\begin{proof} Fix $0<\eta<1$.  
Consider  $$h(s,t):=-Ae^{-mt}\left(1-\frac{s}{\beta_1^2}\right)^l\frac{1}{(4\pi t)^{n/2}}\exp\left(-\alpha\frac{s}{t}\right)\quad\mbox{for $t>0$,}$$ 
as in Lemma 3.22 of  \cite{W} and   define 
  $$\psi(s,t):= h(s,t)+(a_L+\Lambda+1)t\quad\mbox{in $[0,\beta_1^2]\times[0,\beta_2]\bs [0,\frac{\eta^2}{4}]\times[0,\frac{\eta^2}{4}]$, }$$ where the positive constants $A,m,l,\alpha$ ( depending only on $\eta, n,\lambda,\Lambda, a_L$) will be chosen later. In particular, $l$ will be an odd number in $\N$.   We extend $\psi$ smoothly   in $[0, \alpha_1^2]\times[-\frac{\eta^4}{4}, \beta_2]$ to satisfy 
\begin{align*}
\psi\geq0  \quad&\mbox{on $[0, \alpha_1^2]\times[-\frac{\eta^4}{4}, \beta_2]\bs[0,\beta_1^2]\times[0, \beta_2]$, }\\
 \psi\geq-C_\eta \quad&\mbox{on $[0, \alpha_1^2]\times[-\frac{\eta^4}{4}, \beta_2]$, }
\end{align*} and 
$$\displaystyle\sup_{[0, \beta_1^2]\times[0,\beta_2]}\left\{2a_L\left|\p_s\psi\right|+\Lambda\left(2|\p_s\psi|+4s|\p_{ss}\psi|\right)+|\p_t\psi| \right\}(s,t)< C_\eta
$$
for some $C_\eta>0$.
We also assume that $\psi(s,t)$ is nondecreasing with respect to $s$ in $[0,\alpha_1^2]\times[-\frac{\eta^4}{4}, \beta_2]$.
We define 
$$v_\eta(x,t)=v(x,t):=\psi\left(\frac{d^2_{z_o}(x)}{R^2},\frac{t}{R^2}\right) \quad\mbox{for }\,\,\,\,(x,t)\in K_{\alpha_1R,\,\alpha_2R^2}(z_o, \beta_2R^2) , $$ where $d_{z_o}$ is the  distance function to $z_o$.
Properties (i) and (v) are trivial.

 We denote $ d_{z_o}(x) $ and  $h\left(\frac{d^2_{z_o}(x)}{R^2},\frac{t}{R^2}\right)$ by $d(x)$ and $\phi(x,t)$ for simplicity and we notice that for $(x,t)\in K_{\beta_1R,\,\beta_2R^2}(z_o, \beta_2R^2)\bs K_{\frac{\eta}{2}R}(z_o, \frac{\eta^2}{4}R^2)$, 
\begin{align*}
v(x,t)&= h\left(\frac{d^2(x)}{R^2}, \frac{t}{R^2}\right)+(a_L+\Lambda+1)\frac{t}{R^2}=\phi(x,t) + (a_L+\Lambda+1)\frac{t}{R^2}\end{align*}
and $\phi(x,t)$ is negative in $K_{\beta_1R,\,\beta_2R^2}(z_o, \beta_2R^2)$.

Now, we claim that 
\begin{equation}
\label{eq-barrier-phi}
\sL \phi \leq0 \quad\mbox{a.e.  in $K_{\beta_1R,\,\beta_2R^2}(z_o, \beta_2R^2)\bs K_{\frac{\eta}{2} R}(z_o,\frac{\eta^2}{4}R^2) $.}
\end{equation} 
Once \eqref{eq-barrier-phi} is proved, then property (iii) follows from the simple calculation that $R^2\sL\left[(a_L+\Lambda+1)\frac{t}{R^2}\right]=-(a_L+\Lambda+1)$ in $K_{\beta_1R, \beta_2R^2}(z_o, \beta_2R^2)$.
Now we use the identity 
$$\sL[\vp(u(x),t)]=\partial_u\vp(u,t)\sL u+\partial_{uu}\vp(u,t)\langle A_{x,t}\D u,\D u\rangle -\partial_t\vp(u,t)$$ 
to obtain 
\begin{align*}
\sL \phi &=\frac{2d}{R^2}\p_sh\left(\frac{d^2}{R^2},\frac{t}{R^2}\right)Ld\\
&+\left\{\frac{2}{R^2}\p_sh +\frac{4d^2}{R^4}\p_{ss}h\right\}\left(\frac{d^2}{R^2},\frac{t}{R^2}\right)\langle A_{x,t}\D d,\D d\rangle-\frac{1}{R^2}\p_th\left(\frac{d^2}{R^2},\frac{t}{R^2}\right).
\end{align*}
Since $d \cdot Ld\leq a_L$ and $\lambda\leq \langle A_{x,t}\D d,\D d\rangle \leq \Lambda$ in $M\bs \Cut (z_o), $ we have   that 
 \begin{align*}
&\frac{(\beta_1^2R^2-d^2)^2}{(-\phi)}\sL \phi \\&= (\beta_1^2R^2-d^2)\left\{2l+(\beta_1^2R^2-d^2)\frac{2\alpha}{t}\right\}d\sL d\\ 
&-\left\{l(l-1)4d^2 +2l(\beta_1^2R^2-d^2)\frac{4\alpha d^2}{t}+(\beta_1^2R^2-d^2)^2\frac{4\al^2d^2}{t^2}\right\}\langle A_{x,t}\D d,\D d\rangle\\
&+(\beta_1^2R^2-d^2)\left\{ 2l +(\beta_1^2R^2-d^2)\frac{2\al}{t}\right\}\langle A_{x,t}\D d,\D d\rangle\\
&+(\beta_1^2R^2-d^2)^2\frac{\al d^2}{t^2}-(\beta_1^2R^2-d^2)^2\left(\frac{n}{2t}+\frac{m}{R^2}\right)\\
&\leq  2l(\beta_1^2R^2-d^2)(a_L+\Lambda)+(\beta_1^2R^2-d^2)^2\left\{\frac{2\al}{t}(a_L+\Lambda)+\frac{\al d^2}{t^2}\right\}\\
&-l(l-1)4d^2\lambda-(\beta_1^2R^2-d^2)^2\left(\frac{4\al^2d^2}{t^2}\lambda+\frac{n}{2t}\right)\\&-2l(\beta_1^2R^2-d^2)\frac{4\al d^2}{t}\lambda-\frac{m}{R^2}(\beta_1^2R^2-d^2)^2\quad\mbox{a.e. in   $K_{\beta_1R,\,\beta_2R^2}(z_o, \beta_2R^2)$}.
\end{align*} By choosing
\begin{equation}\label{eq-choice-const}
\begin{split}
\al&:=
\frac{1}{4\lambda},\,\,\,\,\frac{2\beta_1^2}{\eta^2\lambda} (a_L+\Lambda)+1\leq l:= 2l'+1 \,\,\,\mbox{(for some  $l'\in\N$)} ,\\
m&:=2\cdot\max\left\{\frac{8\al }{\eta^2}(a_L+\Lambda),\,\, \frac{2l (a_L+\Lambda)}{\beta_1^2-\frac{\eta^2}{4}}\right\},
\end{split}\end{equation} 
we deduce
$$\frac{(\beta_1^2R^2-d^2)^2}{(-\phi)}\sL \phi\leq 0\quad\mbox{a.e. in $K_{\beta_1R, \beta_2R^2}(z_o, \beta_2R^2) \bs K_{\frac{\eta}{2} R}(z_o, \frac{\eta^2}{4}R^2)$.}$$ Indeed,  we 
divide the domain $K_{\beta_1R, \beta_2R^2}(z_o, \beta_2R^2) \bs K_{\frac{\eta}{2} R}(z_o, \frac{\eta^2}{4}R^2)$ into three regions such that
$$K_{\beta_1R, \beta_2R^2}(z_o, \beta_2R^2) \bs K_{\frac{\eta}{2} R}(z_o, \frac{\eta^2}{4}R^2)=:A_1\cup A_2\cup A_3, $$ where
$A_1:= \{0\leq \frac{t}{R^2}\leq \frac{\eta^2}{4},  \frac{\eta}{2}\leq \frac{d}{R}\leq\beta_1 \}, $ $A_2:=\{\frac{\eta^2}{4}\leq \frac{t}{R^2}\leq \beta_2,  \frac{\eta}{2}\leq \frac{d}{R} \leq  \beta_1 \}$ and  $A_3:=\{\frac{\eta^2}{4}\leq \frac{t}{R^2}\leq \beta_2, 0\leq \frac{d}{R}\leq\frac{\eta}{2} \}$.     We can  check that  $$\frac{(\beta_1^2R^2-d^2)^2}{(-\phi)}\sL \phi\leq 0\quad\mbox{a.e. in  $K_{\beta_1R, \beta_2R^2}(z_o, \beta_2R^2) \bs K_{\frac{\eta}{2} R}(z_o, \frac{\eta^2}{4}R^2)$}$$  by choosing $\al$ and $l$ large  in $A_1$,  $m$ large   in $A_2$ and  $A_3$ as in  \eqref{eq-choice-const}. Therefore, we have proved \eqref{eq-barrier-phi}.

 From the assumption on $\psi$,  we have  that for a.e. $(x,t)\in K_{\beta_1R,\,\beta_2R^2}(z_o, \beta_2R^2)$, 
\begin{align*}
R^2\sL v(x,t)
&=2\p_s\psi\left(\frac{d^2}{R^2},\frac{t}{R^2}\right)d\cdot Ld +\left\{2\p_s\psi+\frac{4d^2}{R^2}\p_{ss}\psi\right\}\left(\frac{d^2}{R^2},\frac{t}{R^2}\right)\langle A_{x,t}\D d,\D d\rangle\\&- \p_t\psi\left(\frac{d^2}{R^2},\frac{t}{R^2}\right)\\
&\leq  \displaystyle\sup_{[0, \beta_1^2]\times[0,\beta_2]}\left\{2a_L{\p_s\psi}+\Lambda\left(2\p_s\psi+4s|\p_{ss}\psi|\right)+|\p_t\psi| \right\}(s,t)< C_\eta.
\end{align*} This proves property (iv).

In order to show (ii), we take  $A>0$ large enough  so that for $(x,t)\in K_{2R}(z_o, \beta_2R^2)$, 
\begin{align*}
v(x,t)&\leq -Ae^{-\beta_2m}\left(1-\frac{4}{\beta_1^2}\right)^l\frac{1}{{(4\pi\beta_2)}^{n/2}}e^{-4\al/\eta^2}+(a_L+\Lambda+1)\beta_2\leq 0.
\end{align*}
This finishes the proof of the lemma. 
\end{proof}

Now we  apply  Lemma \ref{lem-abp-type} to $u+v_\eta$ with $v_\eta$ constructed in Lemma \ref{lem-barrier} and translated in time.   
Since the barrier function $v_\eta(x,t)=\psi_\eta\left(\frac{d_{z_o}^2(x)}{R^2},\frac{t}{R^2}\right)$ is not smooth on  $\Cut(z_o)$,  
we need to approximate $v_\eta$ by a sequence of  smooth functions as Cabr\'{e}'s approach at \cite{Ca}. We recall that the cut locus of $z_o$ is  closed and has measure zero.  It is not hard to verify the following lemma and   we just  refer to \cite{Ca} Lemmas 5.3, 5.4. 

\begin{lemma}\label{lem-barrier-approx}
 Let $z_o\in M,\, R>0$ and let $\psi:\R^+\times[0,T]\to\R$ be a smooth  function such that 
$\psi(s,t)$ is nondecreasing with respect to $s$ for any $t\in[0,T]$. Let $v(x,t):=\psi\left({d^2_{z_o}(x)} , t\right)$. Then there exist a smooth function $0\leq\zeta(x)\leq1$ on $M$ satisfying  $$\zeta\equiv1 \,\,\,\,\mbox{in $B_{\beta_1R}(z_o)$ and }\,\,\,\,\supp\zeta\subset B_{\frac{10}{\eta}R}(z_o) $$ and a sequence $\{w_k\}_{k=1}^\infty$ of smooth functions in $M\times[0,T] $  such that
\begin{equation*} 
\left\{
\begin{array}{ll}
 w_k\to \zeta v\qquad &\mbox{uniformly in $M\times[0,T]$, }\\ 
 \p_tw_k\to \zeta \p_tv\qquad &\mbox{uniformly in $M\times[0,T]$, }\\
 D^2_x w_k\leq C \mathrm{Id}\qquad &\mbox{  in $M\times[0,T]$, }\\
 D^2_xw_k\to D_x^2v\qquad &\mbox{a.e. in $B_{\beta_1R}(z_o)\times[0,T]$, }
 \end{array}\right. \qquad \qquad
\end{equation*}
where the constant $C>0$ is independent of $k$.
\end{lemma}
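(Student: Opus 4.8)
The plan is to follow \cite[Lemmas 5.3--5.4]{Ca}, treating $t$ as a passive parameter. The only non-smooth ingredient of $v(x,t)=\psi(d_{z_o}^2(x),t)$ is the distance function $d_{z_o}$, which is smooth off the closed null set $\Cut(z_o)$, so the whole issue is to mollify $\zeta v$ across $\Cut(z_o)$ without destroying an upper bound on $D_x^2$. For the cutoff itself there is essentially nothing to do: since $\beta_1=9/\eta<10/\eta$ there is room to choose $\zeta\in C^\infty(M)$ with $0\le\zeta\le1$, $\zeta\equiv1$ on $B_{\beta_1R}(z_o)$ and $\supp\zeta\subset B_{10R/\eta}(z_o)$ (for instance $\zeta=\chi(\rho/R)$, where $\rho$ is a smooth approximation of $d_{z_o}$ with $\|\rho-d_{z_o}\|_\infty$ as small as we like and $\chi$ is a one-variable cutoff; $\zeta$ need not be radial).

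The structural fact I would isolate is that $v(\cdot,t)$ is uniformly semiconcave in $x$ on $\supp\zeta$. Near $z_o$ it is genuinely smooth. Away from $z_o$, on $M\setminus\Cut(z_o)$ one differentiates, using $\partial_s\psi\ge0$ and the Hessian comparison theorem $D^2d_{z_o}\le k\coth(kd_{z_o})\,\mathrm{Id}$ (with $-k^2$ a lower sectional curvature bound on the compact set $\overline{B_{10R/\eta}(z_o)}$), to get
$$D_x^2v=\partial_{ss}\psi\,\nabla(d_{z_o}^2)\otimes\nabla(d_{z_o}^2)+\partial_s\psi\,D^2(d_{z_o}^2)\le C\,\mathrm{Id}$$
in the barrier sense, with $C$ depending on $\eta,n,R,z_o,\psi$ and the metric, in particular independent of $k$. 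Equivalently, and this is the form convenient for the smoothing, near any point $v(\cdot,t)$ is the minimum of finitely many smooth functions with locally uniform $C^2$ bounds: pulling back by $\exp_{z_o}$, which is smooth and maps $E_{z_o}$ diffeomorphically onto $M\setminus\Cut(z_o)$, one has $v(\exp_{z_o}X,t)=\psi(|X|^2,t)$ for $X\in\overline{E_{z_o}}$; the finitely many preimages $X_1(x),\dots,X_m(x)$ of a nearby point $x$ lying near $\partial E_{z_o}$ give smooth branches $v^{(i)}(x,t)=\psi(|X_i(x)|^2,t)$, and since $\psi$ is nondecreasing, $v(x,t)=\min_i v^{(i)}(x,t)$. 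In particular $\zeta v$ is Lipschitz in $x$ with $D_x^2(\zeta v)\le C\,\mathrm{Id}$ in the barrier sense, and $v$, $\partial_tv=\partial_t\psi(d_{z_o}^2,t)$ are continuous.

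With semiconcavity in hand the smoothing is, locally, routine: mollifying a $(C\,\mathrm{Id})$-semiconcave function in $x$ produces a smooth function that is still $(C\,\mathrm{Id})$-semiconcave (because for a quadratic $Q$ with $D^2Q=C\,\mathrm{Id}$ one has $Q\ast\rho=Q+\mathrm{const}$), converges to it uniformly, converges in $C^1_{\mathrm{loc}}$ on the open dense set $M\setminus\Cut(z_o)$, and has $\partial_t$ converging uniformly to $\zeta\partial_tv$ since $\partial_tv$ is continuous. The a.e. convergence $D_x^2w_k\to D_x^2v$ inside $B_{\beta_1R}(z_o)$ then follows from the fact that the distributional Hessian of a semiconcave function splits as $D_x^2v\,dx$ plus a non-positive singular measure carried by $\Cut(z_o)$, whose density vanishes at Lebesgue-a.e. point. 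Since $\zeta v$ has compact support in $x$, extending its chart-wise mollifications by $0$ produces $w_k\in C^\infty(M\times[0,T])$.

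The delicate point --- and the step I expect to be the real obstacle --- is to carry this smoothing out \emph{globally} while keeping $D_x^2w_k\le C\,\mathrm{Id}$: a naive gluing $\sum_\alpha\theta_\alpha w_k^{(\alpha)}$ of chart-wise mollifications generates cross terms $\nabla\theta_\alpha\otimes\nabla(w_k^{(\alpha)}-w_k^{(\beta)})$ which go to $0$ locally uniformly off $\Cut(z_o)$ but need not be small \emph{near} $\Cut(z_o)$, where $\nabla v$ jumps and two chart-mollifications can have $O(1)$-different gradients, and the sign of these terms is not controlled. This is exactly the point addressed in \cite[Lemmas 5.3--5.4]{Ca}: one regularizes compatibly with the branch structure above --- replacing $\min_i v^{(i)}$ by a smooth, concave-in-its-arguments regularized minimum $m_\delta$ with $\partial_im_\delta\ge0$ and $\sum_i\partial_im_\delta=1$, so that $D_x^2[m_\delta(v^{(1)},\dots,v^{(m)})]\le\max_iD_x^2v^{(i)}\le C\,\mathrm{Id}$ automatically (the remaining term being the pullback of the non-positive form $D^2m_\delta$) --- and then patches by a partition of unity subordinate to the branch cover, compatible on overlaps, rather than to arbitrary charts. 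Because $\Cut(z_o)$ is closed of measure zero and the construction reproduces $v$ away from it, all four asserted properties of $w_k$ follow, uniformly for $t\in[0,T]$ by compactness and the smoothness of $\psi$ in $t$.
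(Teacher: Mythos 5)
The paper itself gives no proof of this lemma---it only cites \cite{Ca}, Lemmas 5.3--5.4---so the relevant comparison is to Cabr\'e's argument, and your sketch does capture its essential structure. You correctly isolate the two non-trivial ingredients: (a) the uniform one-sided Hessian bound $D_x^2 v\le C\,\mathrm{Id}$ on $\supp\zeta$, coming from the Hessian comparison theorem (curvature bounded below on a compact set) together with $\partial_s\psi\ge 0$, exactly as in the cut-point treatment in the proof of Lemma~\ref{lem-abp-type}; and (b) a smoothing of $\zeta v$ that keeps this bound across $\Cut(z_o)$. You are also right to single out the partition-of-unity cross terms as the genuine obstruction: because $\nabla_x v$ jumps across $\Cut(z_o)$, the difference $\nabla(w_k^{(\alpha)}-w_k^{(\beta)})$ between two chart-wise mollifications stays of size $O(1)$ near the cut locus and does not tend to zero with $k$, so a naive gluing would destroy the uniform upper Hessian bound.

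Two points in your branch-based construction deserve a little more care. First, the claim that $v$ is locally the minimum of \emph{finitely many smooth} branches fails precisely at those points of $\Cut(z_o)$ that are conjugate to $z_o$, where $d\exp_{z_o}$ degenerates along $\partial E_{z_o}$ and the local inverses $X_i$ of $\exp_{z_o}$ are not smooth. The bound $D_x^2 v\le C\,\mathrm{Id}$ still holds there by Calabi's advanced-point upper barrier (the same device used at the end of the proof of Lemma~\ref{lem-abp-type}), and since the conjugate locus has measure zero the a.e.\ convergence of $D_x^2 w_k$ is unaffected, but the regularized-min formula by itself does not produce a $C^2$ function near such points and must be supplemented by a separate (barrier-type) argument in an arbitrarily small neighbourhood of the conjugate set. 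Second, the gluing across regions where the set of relevant branches changes is indeed automatic, but it is worth saying why: off a tubular neighbourhood of $\Cut(z_o)$ the dominant branch is separated from all the others by a definite amount, so a regularized minimum $m_\delta$ that agrees with $\min$ whenever its arguments are $\delta$-separated collapses there to a single smooth branch, and the patchings of $m_\delta$-expressions over different covers agree identically on that set. With these two points supplied, your sketch is sound and reproduces what the cited lemmas of \cite{Ca} deliver, with $t$ correctly carried along as a smooth parameter to get the statements about $\partial_t w_k$.
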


\begin{lemma} \label{lem-decay-est-1-step}Suppose that $M$ satisfies the conditions \eqref{cond-M-1},\eqref{cond-M-2}. Let $z_o\in M, R>0$,  and $0<\eta<1$.
Let $u$ be a smooth function such that $\sL u \leq f$  in $K_{\alpha_1R,\, \alpha_2R^2}(z_o,4R^2)$ such that
$$u\geq 0\quad\mbox{in}\quad K_{\alpha_1R,\,\alpha_2R^2}(z_o, 4R^2)\bs  K_{\beta_1R,\,\beta_2R^2}(z_o,4R^2) $$
and $$\inf_{K_{2R}(z_o,4R^2)}u\leq1. $$
Then, there exist uniform constants $M_\eta>1, 0<\mu_\eta<1$,  and $0<\ve_\eta<1$ such that 
\begin{equation}\label{eq-decay-est-1-step}
\frac{\left|\left\{u\leq M_{\eta}\right\}\cap K_{\eta R}(z_o ,0)\right|}{\left|K_{\alpha_1R,\, \alpha_2R^2}(z_o, 4R^2)\right|}\geq \mu_\eta,\end{equation}
provided 
\begin{equation}\label{cond-f}
R^2\left(\fint_{K_{\alpha_1R,\,\alpha_2R^2}(z_o, 4R^2)} |f^+|^{n+1}\right)^{\frac{1}{n+1}}\leq \ve_\eta,
\end{equation}
where $ M_\eta>0,\, 0<\ve_\eta, \,\mu_\eta<1 $ depend only on $\eta, n, \lambda,\Lambda$ and $a_L$.
\end{lemma}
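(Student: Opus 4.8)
The plan is to combine the ABP--Krylov--Tso estimate of Lemma~\ref{lem-abp-type} with the barrier $v_\eta$ of Lemma~\ref{lem-barrier}, applying the former not to $u$ but to $u+v_\eta$. Since the barrier lives on $K_{\alpha_1R,\,\alpha_2R^2}(z_o,\beta_2R^2)$ while $u$ lives on $K_{\alpha_1R,\,\alpha_2R^2}(z_o,4R^2)$, and Lemma~\ref{lem-abp-type} is stated for cylinders based at time $0$, I would first pass to the translates $\hat u(x,t):=u(x,t+4R^2)$ and $\hat v_\eta(x,t):=v_\eta(x,t+\beta_2R^2)$ (recall $\beta_2R^2=(4+\eta^2)R^2$), both defined on $K_{\alpha_1R,\,\alpha_2R^2}(z_o,0)$, and set $\hat w:=\hat u+\hat v_\eta$. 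One technical point: $v_\eta$ is not smooth across $\Cut(z_o)$, so one really works with the smooth functions $w_k$ approximating a cutoff of $\hat v_\eta$ provided by Lemma~\ref{lem-barrier-approx}, applies Lemma~\ref{lem-abp-type} to $\hat u+w_k$, and sends $k\to\infty$; the uniform bounds on $w_k$ there guarantee that every estimate below survives the limit. For readability I will argue directly with $\hat w$ and suppress the approximation.

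Next I would verify the hypotheses of Lemma~\ref{lem-abp-type} for $\hat w$. Property~(i) gives $\hat v_\eta\ge0$ on $K_{\alpha_1R,\,\alpha_2R^2}(z_o,0)\bs K_{\beta_1R,\,\beta_2R^2}(z_o,0)$, which together with the hypothesis $u\ge0$ outside $K_{\beta_1R,\,\beta_2R^2}(z_o,4R^2)$ gives $\hat w\ge0$ on $K_{\alpha_1R,\,\alpha_2R^2}(z_o,0)\bs K_{\beta_1R,\,\beta_2R^2}(z_o,0)$; property~(ii) gives $\hat v_\eta\le0$ on $K_{2R}(z_o,0)$, so $\hat w\le\hat u$ there and hence $\inf_{K_{2R}(z_o,0)}\hat w\le\inf_{K_{2R}(z_o,4R^2)}u\le1$. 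Thus Lemma~\ref{lem-abp-type} applies and yields, for the constant $M_\eta$ appearing there,
\begin{equation*}
|B_R(z_o)|\cdot R^2\le C(\eta,n,\lambda)\int_{\{\hat w\le M_\eta\}\cap K_{\beta_1R,\,\beta_2R^2}(z_o,0)}\bigl\{\bigl(R^2\sL\hat w+a_L+\Lambda+1\bigr)^+\bigr\}^{n+1}.
\end{equation*}

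Then I would estimate the integrand using $\sL u\le f$ together with barrier properties (iii)--(iv). Write $\widehat T:=K_{\frac{\eta}{2}R}\bigl(z_o,\tfrac{\eta^2}{4}R^2-\beta_2R^2\bigr)$ for the time-translate of the exceptional set in~(iii). Off $\widehat T$, property~(iii) gives $R^2\sL\hat v_\eta+a_L+\Lambda+1\le0$ a.e., hence $R^2\sL\hat w+a_L+\Lambda+1\le R^2\hat f^+$ there; on $\widehat T$, property~(iv) gives $R^2\sL\hat w+a_L+\Lambda+1\le R^2\hat f^++\bar C_\eta$ with $\bar C_\eta:=C_\eta+a_L+\Lambda+1$. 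Splitting the integral along $\widehat T$ and its complement, using $(a+b)^{n+1}\le2^n(a^{n+1}+b^{n+1})$, and bounding $\int_{K_{\alpha_1R,\,\alpha_2R^2}}(R^2\hat f^+)^{n+1}\le\ve_\eta^{\,n+1}\,|K_{\alpha_1R,\,\alpha_2R^2}|$ by the hypothesis~\eqref{cond-f}, I arrive at
\begin{equation*}
|B_R(z_o)|\cdot R^2\le C(\eta,n,\lambda)\Bigl\{(1+2^n)\,\ve_\eta^{\,n+1}\,|K_{\alpha_1R,\,\alpha_2R^2}|+2^n\,\bar C_\eta^{\,n+1}\,\bigl|\{\hat w\le M_\eta\}\cap\widehat T\bigr|\Bigr\}.
\end{equation*}

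Finally, Bishop's volume comparison (Lemma~\ref{lem-jac-bishop}(ii), where condition~\eqref{cond-M-1} enters) gives $|K_{\alpha_1R,\,\alpha_2R^2}|=|B_{\alpha_1R}(z_o)|\cdot\alpha_2R^2\le\alpha_1^n\alpha_2\,|B_R(z_o)|\cdot R^2$; choosing $\ve_\eta$ small, depending only on $\eta,n,\lambda$, so that $C(\eta,n,\lambda)(1+2^n)\ve_\eta^{\,n+1}\le\tfrac12\alpha_1^{-n}\alpha_2^{-1}$, I can absorb the first term on the right, which leaves $|\{\hat w\le M_\eta\}\cap\widehat T|\ge\mu_\eta\,|K_{\alpha_1R,\,\alpha_2R^2}|$ for a uniform $\mu_\eta\in(0,1)$. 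Undoing the time translations, $\widehat T$ corresponds to $B_{\frac{\eta}{2}R}(z_o)\times(-\eta^2R^2,-\tfrac34\eta^2R^2\,]$, and the explicit values of $\alpha_i,\beta_i$ show this lies inside $K_{\eta R}(z_o,0)$; moreover property~(v) gives $v_\eta\ge-C_\eta$, so $\{u+v_\eta\le M_\eta\}\subset\{u\le M_\eta+C_\eta\}$, and after renaming $M_\eta:=M_\eta+C_\eta$ we obtain \eqref{eq-decay-est-1-step}. The real work is already done in Lemmas~\ref{lem-abp-type} and~\ref{lem-barrier}; the main obstacle here is the careful bookkeeping of the time translations that align all the cylinders, together with the approximation argument of Lemma~\ref{lem-barrier-approx} forced by the non-smoothness of $v_\eta$ on the cut locus.
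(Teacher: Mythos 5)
Your proposal is correct and follows essentially the same route as the paper: apply Lemma~\ref{lem-abp-type} to $u+v_\eta$ (via the approximants of Lemma~\ref{lem-barrier-approx}), verify its hypotheses from barrier properties (i)--(ii), control the integrand with (iii)--(iv), absorb the $f$-term using Bishop's comparison and a small choice of $\ve_\eta$, and pass from $u+v_\eta$ back to $u$ with property (v). The only cosmetic difference is that you split the contact set along the exceptional set $\widehat T$ rather than along $K_{\eta R}(z_o,0)$ as the paper does (which gives a marginally sharper but equivalent conclusion); just note that, as in the paper, the approximants must be corrected to $(u+w_k+\ve_k)/(1+2\ve_k)$ so that the hypotheses of Lemma~\ref{lem-abp-type} hold exactly before passing to the limit.
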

\begin{proof}
Let $v_\eta$ be the barrier function in Lemma \ref{lem-barrier} after translation in time (by $-\eta^2R^2$) and let $\{w_k\}_{k=1}^\infty$ be a sequence of smooth functions approximating $v_\eta$ as in  Lemma \ref{lem-barrier-approx}. We notice that $u+v_\eta\geq0 $ in $K_{\alpha_1R, \,\alpha_2R^2}(z_o, 4R^2)\bs K_{\beta_1R, \,\beta_2R^2}(z_o, 4R^2)$ and $\displaystyle\inf_{K_{2R}(z_o,4R^2)} (u+v_{\eta})\leq1$. 
Thanks to the uniform convergence of $w_k$ to $\zeta v_\eta$, we consider a sequence $\{\ve_k\}_{k=1}^{\infty}$ converging to $0$ such that $\displaystyle\sup_{K_{2R}(z_o, 4R^2)}w_k\leq \ve_k$ and  $$w_k \geq -\ve_k\,\,\,\mbox{in}\,\,\, {K_{\alpha_1R,\,\alpha_2R^2}(z_o,4R^2)\bs K_{\beta_1R,\,\beta_2R^2}(z_o,4R^2)}, $$ and 
define 
$$\overline{w}_k:= \frac{u+w_k+\ve_k}{1+2\ve_k}. $$
Then $\overline{w}_k$ satisfies the hypotheses of Lemma \ref{lem-abp-type} (after translation in time by $4R^2$). 
Now we replace $u $ by $\overline w_k$ in \eqref{eq-abp-type} and then the uniform convergence implies that for a given $0<\delta<1$,  we have  
\begin{align*}
|B_R(z_o)|R^2
&\leq   C(\eta,n,\lambda)\int_{\{u+v_\eta\leq M_\eta+\delta\}\cap K_{\beta_1R, \,\beta_2R^2}(z_o,4R^2)}\left\{\left(R^2\sL \overline w_k + a_{L}+\Lambda+1 \right)^+ \right\}^{n+1} 
\end{align*}
if $k$ is sufficiently large.  Since $D_x^2w_k\leq C{\rm Id}$ and $|\p_tw_k|<C$ uniformly in $k$ on $K_{\beta_1 R,\beta_2 R^2}(z_o,4R^2)$, we  use the dominated convergence theorem to let $k$ go to $+\infty$. Letting $\delta$ go to $0$,  we  obtain 
\begin{align*}
|B_R(z_o)|\cdot R^2&\leq C(\eta,n,\lambda)\int_{\{u+v_\eta\leq M_\eta\}\cap K_{\beta_1R, \beta_2R^2}(z_o, 4R^2)}\left\{\left(R^2\sL[u+v_\eta]+ a_{L}+\Lambda+1 \right)^+ \right\}^{n+1}\\
&= C(\eta,n,\lambda)\int_{E_1\cup E_2} \left\{\left(R^2\sL[u+v_\eta]+ a_{L}+\Lambda+1 \right)^+ \right\}^{n+1},
\end{align*}
where  $E_1:=\{u+v_\eta\leq M_\eta\}\cap \left(K_{\beta_1R,\,\beta_2R^2}(z_o,4R^2)\bs K_{\eta R}(z_o,0)\right) $ and $E_2:=\{u+v_\eta\leq M_\eta\}\cap K_{\eta R}(z_o,0)$. 
From properties (iii) and (iv) of $v_\eta$ in Lemma \ref{lem-barrier} and Bishop's volume comparison theorem in Lemma \ref{lem-jac-bishop}, we deduce that
\begin{align*}{|K_{\alpha_1R,\,\alpha_2R^2}(z_o,4R^2)|^{\frac{1}{n+1}}} 
&\leq C_\eta \left\|\left(R^2\sL u\right)^+\right\|_{L^{n+1}\left(K_{\beta_1R, \beta_2R^2}(z_o,4R^2)\right)}+C_\eta\left\|\chi_{E_{2}}\right\|_{L^{n+1}\left(K_{\beta_1R, \,\beta_2R^2}(z_o,4R^2)\right)} \\&\leq C_\eta ||R^2f^+||_{L^{n+1}\left(K_{\alpha_1R,\,\alpha_2R^2}(z_o,4R^2)\right)} +C_\eta\left|\left\{u+v_\eta\leq M_\eta\right\}\cap K_{\eta R }(z_o,0)\right|^{\frac{1}{n+1}},\end{align*}
where $C_\eta>0$ depends only on $n,\lambda$ and $\eta>0$. 
We note that $\{u\leq M_\eta-v_\eta\}\subset \{u\leq M_\eta+C_\eta\}$ from (v) in Lemma \ref{lem-barrier}.
 Therefore, by taking $$  \ve_\eta=\frac{1}{2C_\eta},\,\,\, M'_\eta= M_\eta+C_\eta\,\,\,\,\mbox{and}\,\,\,\,\mu_\eta^{\frac{1}{n+1}}=\frac{1}{2C_\eta}, $$  we conclude that  $\displaystyle\frac{\left|\left\{u\leq M'_{\eta}\right\}\cap K_{\eta R}(z_o,0) \right|}{\left|K_{\alpha_1R,\,\alpha_2R^2}(z_o,4R^2)\right|}\geq \mu_\eta>0.$\end{proof}


Using iteration of Lemma \ref{lem-decay-est-1-step}, we have the following corollaries. 

\begin{cor}\label{cor-decay-1}
Suppose that $M$ satisfies the conditions \eqref{cond-M-1},\eqref{cond-M-2}. Let $z_o\in M$ and $0<\eta<1$.  
  For $i\in \N$,  let $\overline R_i:=\left(\frac{2}{\eta}\right)^{i-1} R$ and  $\overline t_i:=\sum_{j=1}^i 4\overline R_j^2$. 
Let $u$ be a nonnegative  smooth function such that $\sL u\leq f$ in $\bigcup_{i=1}^k K_{\alpha_1\overline R_i, \alpha_2\overline R_i^2}(z_o,\overline t_i)$ for some $k\in\N$.   
 We assume that for $h>0$,  $\displaystyle\inf_{\bigcup_{i=1}^kK_{2\overline R_i}(z_o,\overline t_i)}u\leq h$ and $$
 \overline R_i^2\left(\fint_{ K_{\alpha_1\overline R_i,  \alpha_2\overline R_i^2}(z_o, \overline t_i)}|f^+|^{n+1}\right)^{\frac{1}{n+1}}\leq \ve_\eta h M_\eta^{k-i},\,\,\,\forall 1\leq i\leq k.
$$ 
 Then we have 
\begin{equation}\label{eq-cor-decay-1}
\frac{\left|\left\{u\leq h M^k_{\eta}\right\}\cap K_{\eta R}(z_o,0)\right|}{\left|K_{\alpha_1R,\,\alpha_2R^2}(z_o,4R^2)\right|}\geq \mu_\eta,\end{equation}
where $M_\eta, \ve_\eta,\mu_\eta$ are the same uniform constants as  in Lemma \ref{lem-decay-est-1-step}.
\end{cor}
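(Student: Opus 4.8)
The plan is a straightforward induction on $k$: Lemma~\ref{lem-decay-est-1-step} serves both as the base case and as the final step of the inductive argument, while the inductive step invokes the corollary itself for $k-1$ steps at the larger base scale $\overline R_2=(2/\eta)R$. Everything is driven by the elementary identities
\[
\eta\,\overline R_{i+1}=2\,\overline R_i,\qquad \eta^{2}\overline R_{i+1}^{2}=4\,\overline R_i^{2},\qquad \overline t_{i+1}-\overline t_i=4\,\overline R_{i+1}^{2},\qquad \overline t_1=4R^{2},
\]
which say precisely that the ``output cylinder'' $K_{\eta\overline R_{i+1}}(z_o,\overline t_i)$ produced by one application of Lemma~\ref{lem-decay-est-1-step} at scale $\overline R_{i+1}$ is literally the ``infimum cylinder'' $K_{2\overline R_i}(z_o,\overline t_i)$ used at scale $\overline R_i$; combined with the trivial scaling $\sL(cu)=c\,\sL u$ for $c>0$, these let the smallness conditions on $f$ telescope.

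For $k=1$ the hypotheses read $\sL u\le f$ in $K_{\alpha_1R,\alpha_2R^2}(z_o,4R^2)$, $\inf_{K_{2R}(z_o,4R^2)}u\le h$, and $R^2\big(\fint_{K_{\alpha_1R,\alpha_2R^2}(z_o,4R^2)}|f^+|^{n+1}\big)^{1/(n+1)}\le\ve_\eta h$; applying Lemma~\ref{lem-decay-est-1-step} to the nonnegative smooth function $u/h$ (with right-hand side $f/h$) gives \eqref{eq-cor-decay-1} at once. For the inductive step, assume the statement for $k-1$ at every center and base scale. Since $\inf_{\bigcup_{i=1}^k K_{2\overline R_i}(z_o,\overline t_i)}u\le h$, at least one of $\inf_{K_{2R}(z_o,4R^2)}u\le h$ and $\inf_{\bigcup_{i=2}^k K_{2\overline R_i}(z_o,\overline t_i)}u\le h$ holds. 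In the first case, as $M_\eta>1$ we already have $\inf_{K_{2R}(z_o,4R^2)}u\le h\le hM_\eta^{k-1}$. In the second, apply the induction hypothesis to $\tilde u(x,t):=u(x,t+4R^2)$ at base scale $\overline R_2$: writing $\hat R_j:=\overline R_{j+1}$ and $\hat t_j:=\sum_{l=1}^j4\hat R_l^2=\overline t_{j+1}-4R^2$, the differential inequality and infimum hypotheses for $\tilde u$ are exactly the given ones restricted to the cylinders $i=2,\dots,k$, and the smallness condition for $\tilde u$ with index $j$ is precisely the given one with index $i=j+1$ (since $M_\eta^{(k-1)-j}=M_\eta^{k-(j+1)}$). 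Its conclusion, read back through $\eta\hat R_1=\eta\overline R_2=2R$, says that $\{u\le hM_\eta^{k-1}\}\cap K_{2R}(z_o,4R^2)$ has positive volume, hence is nonempty, so again $\inf_{K_{2R}(z_o,4R^2)}u\le hM_\eta^{k-1}$. In either case a final application of Lemma~\ref{lem-decay-est-1-step} to $u/(hM_\eta^{k-1})$ at base scale $R$ — whose infimum over $K_{2R}(z_o,4R^2)$ is $\le M_\eta^{-(k-1)}\le1$, and for which the index-$i=1$ smallness hypothesis divided by $hM_\eta^{k-1}$ is exactly the threshold $\ve_\eta$ — yields \eqref{eq-cor-decay-1}, because $\{u/(hM_\eta^{k-1})\le M_\eta\}=\{u\le hM_\eta^{k}\}$.

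I expect no genuine obstacle: granting Lemma~\ref{lem-decay-est-1-step}, this is bookkeeping. The only points needing care are (a) matching the powers of $M_\eta$ in the smallness conditions so that they telescope without loss, which is exactly why the hypothesis carries the factor $M_\eta^{k-i}$, and (b) keeping the time-translations straight so that the nested parabolic cylinders line up as in the displayed identities above.
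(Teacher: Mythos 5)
Your proof is correct and follows essentially the same route as the paper: induction on $k$ driven by Lemma \ref{lem-decay-est-1-step} and the identity $K_{\eta\overline R_{i+1}}(z_o,\overline t_i)=K_{2\overline R_i}(z_o,\overline t_i)$, with the powers $M_\eta^{k-i}$ telescoping exactly as you describe. The only (harmless) difference is organizational: the paper locates the scale $j_o$ at which the infimum over the union is attained, applies the one-step lemma once there, and then invokes the induction hypothesis on the lower $j_o-1$ scales, whereas you invoke the induction hypothesis on the upper $k-1$ scales first and apply the one-step lemma last at the base scale $R$.
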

\begin{proof} 
We may assume $h=1$ since $v:=\frac{u}{h}$ satisfies $\sL v=\frac{1}{h}\sL u\leq \frac{f}{h}$. We use the induction on $k$ to show the lemma.  When $k=1$,  it is immediate from Lemma \ref{lem-decay-est-1-step}.  

Now suppose that \eqref{eq-cor-decay-1} is true for $k-1$. By assumption, we find a $j_o\in\N$ such that $1\leq j_o\leq k$ and 
$\displaystyle\inf_{ K_{2\overline R_{j_o}}(z_o, \overline t_{j_o})}u =\inf_{\bigcup_{i=1}^kK_{2\overline R_{i}}(z_o, \overline t_{i})}u\leq 1$. Define $v:={u}/{M^{k-j_o}_{\eta}}$.   Then $v$ satisfies  that $\displaystyle\sL v\leq {f}/{M^{k-j_o}_{\eta}},\, \inf_{K_{2\overline R_{j_o}}(z_o, \overline t_{j_o})}v\leq 1$   and 
$$
\overline R_{j_o}^2\left(\fint_{ K_{\alpha_1\overline R_{j_o}, \,\alpha_2\overline R_{j_o}^2}(z_o,\overline t_{j_o})}\left|f^+/ M^{k-j_o}_\eta\right|^{n+1}\right)^{\frac{1}{n+1}}
\leq \ve_\eta.
$$
 Applying Lemma \ref{lem-decay-est-1-step} to $v$ in $K_{\alpha_1\overline R_{j_o},\,\alpha_2\overline R_{j_o}^2}(z_o,\overline t_{j_o})$,  we deduce 
$$\frac{\left|\left\{v\leq M_{\eta}\right\}\cap K_{\eta\overline R_{j_o}}(z_o,\overline t_{j_o}-4R_{j_o}^2)\right|}{\left| K_{\alpha_1\overline R_{j_o}, \,\alpha_2\overline R_{j_o}^2}(z_o,\overline t_{j_o})\right|}=\frac{\left|\left\{v\leq M_{\eta}\right\}\cap K_{2\overline R_{j_o-1}}(z_o,\overline t_{j_o-1})\right|}{\left| K_{\alpha_1\overline R_{j_o}, \,\alpha_2\overline R_{j_o}^2}(z_o,\overline t_{j_o})\right|}\geq \mu_\eta>0$$ which implies that $\displaystyle\inf_{\bigcup_{i=1}^{j_o-1}K_{2\overline R_i}(z_o,\overline t_i)}u \leq \inf_{K_{2\overline R_{j_o-1}}(z_o,\overline t_{j_o-1})} u\leq M^{k-j_o+1}_{\eta}$. Therefore, we use the induction hypothesis for $j_o-1 (\leq k-1)$ to conclude 
$$\frac{\left|\left\{u/M_\eta^{k-j_o+1}\leq M^{j_o-1}_{\eta}\right\}\cap K_{\eta R}(z_o,0)\right|}{\left|K_{\alpha_1R,\,\alpha_2R^2}(z_o , 4R^2)\right|}\geq \mu_\eta>0,$$
which implies \eqref{eq-cor-decay-1}.
\end{proof}

We remark that Lemma \ref{lem-decay-est-1-step} and  Corollary~\ref{cor-decay-1} hold  for any $M'_\eta\geq M_\eta$.  
The following is a simple technical lemma that will be used in the proof of Proposition \ref{cor-decay-2}.
 
\begin{lemma}\label{lem-ext-time}
Let $A,D>0$ and  $\ve>0$. Let $u$  be a nonnegative smooth function such that $\sL u \leq f$ in $ \overline{B_R(z_o)}\times(-AR^2,0]$  with 
$$R^2\left(\fint_{B_R(z_o)\times(-AR^2,0]}|f^+|^{n+1}\right)^{\frac{1}{n+1}}\leq\ve. $$
 Then, there exists a sequence $u_k$ of nonnegative smooth functions in $B_R(z_o)\times(-AR^2,DR^2]$ such that $u_k$ converges to $u$ locally uniformly in $B_R(z_o)\times(-AR^2,0]$ and $\sL u_k \leq g_k$ in $B_R(z_o)\times(-AR^2,DR^2]$ with 
 $$R^2\left(\fint_{B_R(z_o)\times(-AR^2,DR^2]}|g_k^+|^{n+1}\right)^{\frac{1}{n+1}}\leq\ve. $$
\end{lemma}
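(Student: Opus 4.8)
The plan is to extend $u$ forward in time, onto the slab $0<t\le DR^2$, in such a way that $\sL$ of the extension becomes nonpositive there, so that the new slab contributes essentially nothing to $\int\abs{g_k^+}^{n+1}$; the extra volume coming from $D>0$ then absorbs a small gluing error. The key point is that lowering $\sL$ is cheap: if we add to $u$ a function $\vp(t)$ of time alone, then $\sL[u+\vp]=\sL u-\vp'(t)$, so a sufficiently steep \emph{increasing} ramp in $t$ forces $\sL$ of the result to be $<0$. I would keep $u_k$ equal to $u$ on the lower part of the cylinder, both to obtain locally uniform convergence and to stay away from $t=-AR^2$ where the derivatives of $u$ need not be controlled; then, over a short window ending at $t=0$, I would freeze $u$ in time and simultaneously switch on the ramp.

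Concretely: fix $C_0\ge1$, independent of $k$, such that $\bigabs{\tr\bigl(A_{x,\tau}\,D_x^2u(x,s)\bigr)}\le C_0-1$ for all $x\in\overline{B_R(z_o)}$, $\tau\in\bR$ and $s\in[-\tfrac{A}{2}R^2,0]$; this is possible since $u$ is smooth up to $t=0$ and $\abs{A_{x,\tau}}\le\Lambda$. For $k$ large enough that $-2/k\ge-\tfrac{A}{2}R^2$, pick a smooth nondecreasing $\sigma_k:\bR\to\bR$ with $\sigma_k(t)=t$ for $t\le-2/k$, $\sigma_k\equiv-1/k$ for $t\ge-1/k$, $0\le\sigma_k'\le2$, and a smooth nondecreasing $\zeta_k:\bR\to[0,\infty)$ with $\zeta_k\equiv0$ for $t\le-2/k$, $\zeta_k'\equiv1$ for $t\ge-1/k$, $0\le\zeta_k'\le2$ (hence $0\le\zeta_k\le2/k$ on $(-\infty,0]$). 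Set
$$u_k(x,t):=u\bigl(x,\sigma_k(t)\bigr)+C_0\,\zeta_k(t),\qquad(x,t)\in B_R(z_o)\times(-AR^2,DR^2].$$
Then $u_k$ is smooth and nonnegative, it equals $u$ on $B_R(z_o)\times(-AR^2,-2/k]$, and on the remaining slab $(-2/k,0]$ one has $\abs{u_k-u}\le\omega(2/k)+2C_0/k\to0$, where $\omega$ is a modulus of continuity of $u$ on a compact neighborhood; hence $u_k\to u$ locally uniformly on $B_R(z_o)\times(-AR^2,0]$.

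For the bound I would estimate $\sL u_k$ on the two new regions. For $t\ge-1/k$ we have $D_x^2u_k(x,t)=D_x^2u(x,-1/k)$ and $\partial_tu_k\equiv C_0$, so $\sL u_k=\tr\bigl(A_{x,t}D_x^2u(x,-1/k)\bigr)-C_0\le-1<0$; on the gluing window $(-2/k,-1/k]$ the chain rule and $0\le\sigma_k',\zeta_k'\le2$ give $\abs{\sL u_k}\le C_1$ for some $C_1$ depending only on $n,\Lambda,C_0$ and $\sup_{\overline{B_R(z_o)}\times[-\frac{A}{2}R^2,0]}\abs{\partial_tu}$, again independent of $k$. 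Take $g_k:=(\sL u_k)^+$; then $g_k\le f^+$ on $(-AR^2,-2/k]$, $g_k\le C_1$ on $(-2/k,-1/k]$, and $g_k\equiv0$ for $t>-1/k$, so
$$\int_{B_R(z_o)\times(-AR^2,DR^2]}\abs{g_k^+}^{n+1}\le\int_{B_R(z_o)\times(-AR^2,0]}\abs{f^+}^{n+1}+C_1^{\,n+1}\,\abs{B_R(z_o)}\,k^{-1}.$$
Dividing by $\vol\bigl(B_R(z_o)\times(-AR^2,DR^2]\bigr)=\abs{B_R(z_o)}(A+D)R^2$ and using $\fint_{B_R(z_o)\times(-AR^2,0]}\abs{f^+}^{n+1}\le(\ve/R^2)^{n+1}$ yields $\fint_{B_R(z_o)\times(-AR^2,DR^2]}\abs{g_k^+}^{n+1}\le\bigl(A(\ve/R^2)^{n+1}+C_1^{n+1}/(kR^2)\bigr)/(A+D)$, which is $\le(\ve/R^2)^{n+1}$ once $k\ge C_1^{\,n+1}R^{2n}/(D\ve^{n+1})$. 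For those $k$ the required inequality $R^2(\fint\abs{g_k^+}^{n+1})^{1/(n+1)}\le\ve$ holds, and re-indexing to drop the finitely many initial terms gives the sequence.

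The one delicate point — the hard part — is the gluing window $(-2/k,-1/k]$, on which $u_k$ is neither $u$ nor the frozen-plus-ramp extension, so $\sL u_k$ is merely bounded rather than small. This is precisely where $D>0$ enters: the bad window has time-length $1/k\to0$ while $C_1$ stays fixed, so its $L^{n+1}$-contribution is $O(1/k)$, and enlarging the cylinder multiplies its volume by $(A+D)/A>1$, leaving a fixed margin that swallows that contribution for $k$ large.
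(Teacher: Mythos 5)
Your proof is correct. The core idea is the same as the paper's: extend $u$ forward in time by freezing the spatial profile and adding a linear ramp $C_0 t$ whose slope dominates $\operatorname{trace}(A\circ D_x^2u)$ near $t=0$, so that $\sL$ of the extension is negative on the new slab, and then use the enlarged time-length $(A+D)R^2$ in the denominator of the average. Where you diverge is in how smoothness is restored. The paper defines the extension as a merely Lipschitz-in-time piecewise function ($0$ for $t\le -AR^2$, $u$ on $(-AR^2,0]$, $u(x,0)+St$ afterwards) and then mollifies in time with a \emph{one-sided} (backward-looking) kernel; this yields $\sL u_k\le g_k$ with $g_k$ a mollification of $\overline f^{\,+}$, so $\|g_k^+\|_{L^{n+1}}\le\|f^+\|_{L^{n+1}}$ with no extra error, and the bound $\le (A/(A+D))^{1/(n+1)}\ve<\ve$ holds for every $k$. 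You instead build $u_k$ smooth from the outset via the reparametrization $u(x,\sigma_k(t))+C_0\zeta_k(t)$, at the price of a gluing window of time-length $1/k$ on which $\sL u_k$ is only bounded; you then spend part of the $D$-margin to absorb that $O(1/k)$ contribution, which forces you to discard finitely many initial terms. Your variant has the advantage that $u_k\equiv u$ on most of the original cylinder (so locally uniform convergence is immediate and $g_k^+\le f^+$ there pointwise), and it sidesteps the mild issue of mollifying across the jump of $\overline u$ at $t=-AR^2$; the paper's variant has the advantage of producing the clean factor $(A/(A+D))^{1/(n+1)}$ uniformly in $k$. (Two cosmetic slips that do not affect the argument: with $\zeta_k'\equiv1$ on $[-1/k,0]$ your bound on $\zeta_k$ over $(-\infty,0]$ should be $3/k$ rather than $2/k$, and the constant $C_0$ should be recorded as depending on $n$ and $\Lambda$ as well as on $\sup|D_x^2u|$.)
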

\begin{proof}
First, 
we 
define  for $(x,t)\in B_R(z_o)\times(-\infty, DR^2]$, 
\begin{equation*} 
\overline u(x,t):= \left\{
\begin{array}{ll}
0\qquad &\mbox{for $\,\,t\in  (-\infty,-AR^2]$, }\\ 
u(x,t)\qquad &\mbox{for $\,\,t\in  (-AR^2,0]$, }\\ 
 u(x,0)+St\qquad &\mbox{for $\,\,t\in(0, DR^2]$},
 \end{array}\right. \qquad \qquad
\end{equation*}
where $\displaystyle S:=\sup_{B_R(z_o)}\left\{(\sL u)^+(x,0)+|u_t(x,0)|\right\}$. Then $\overline u$ is Lipschitz continuous with respect to time  in $B_R(z_o)\times(-AR^2,DR^2]$ and satisfies 
\begin{equation*} 
\sL \overline u (x,t)\leq \overline f(x,t):=\left\{
\begin{array}{ll}
0\quad &\mbox{for $ t\in  (-\infty,-AR^2)$, }\\ 
f(x,t)\quad &\mbox{for $ t\in  (-AR^2,0)$, }\\ 
  \sL u(x,0)+u_t(x,0)-S\leq0\,\,\,&\mbox{for $t\in(0,DR^2]$}.
 \end{array}\right. \qquad \qquad
\end{equation*}
Let $\ve_k>0$ converge to $0$ as 
$k\to+\infty,$   and let $\vp$ be a nonnegative smooth function such that $\vp(t)=0$ for $t\not\in(0,1)$ and $\int_\R\vp(t)dt=1$.   We define  $\displaystyle\vp_k(t):=\frac{1}{\ve_k}\vp\left(\frac{t}{\ve_k}\right)$ and  
$$u_k(x,t):=  \int_\R\overline u(x,s)\vp_k(t-s)ds,\,\,\, \forall(x,t)\in B_R(z_o)\times(-\infty, DR^2], $$ where we notice that the above integral is calculated over $(t-\ve_k, t)\subset \R$. Then, a smooth function $u_k$  satisfies  $$\sL u_k (x,t)=\int_\R\sL \overline u(x,s)\vp_k(t-s)ds  \leq g_k(x,t),\quad\mbox{$\forall(x,t)\in B_R(z_o)\times(-\infty,DR^2],$ }$$ where $g_k(x,t):=\displaystyle\int_\R \overline f^+(x,s)\vp_k(t-s)ds  \geq 0$. We also have 
\begin{align*}
R^2\left(\fint_{B_R(z_o)\times(-AR^2,DR^2]}|g_k^+|^{n+1}\right)^{\frac{1}{n+1}}
&\leq \frac{R^2}{\left\{|B_R(z_o)|\cdot(A+D)R^2\right\}^{\frac{1}{n+1}}}||\overline f^+||_{L^{n+1}\left(B_R(z_o)\times(-AR^2-\ve_k,DR^2-\ve_k]\right)}\\
&\leq\frac{R^2}{\left\{|B_R(z_o)|\cdot(A+D)R^2\right\}^{\frac{1}{n+1}}}||\overline f^+||_{L^{n+1}\left(B_R(z_o)\times(-AR^2,0]\right)}\\
&\leq \left(\frac{A}{A+D}\right)^{\frac{1}{n+1}}\ve  < \ve,
\end{align*}which  finishes the proof.
\end{proof}

\begin{prop}\label{cor-decay-2} 
Suppose that $M$ satisfies the conditions \eqref{cond-M-1},\eqref{cond-M-2}. Let $z_o\in M, R>0, 0<\eta<\frac{1}{2}$ and $\tau\in[3,16]. $   
Let $u$ be a nonnegative smooth function such that $\sL u \leq f $  in $
 B_{\frac{49}{\eta^3}R}(z_o)\times\left(-3R^2,\frac{\tau R^2}{\eta^2}\right]$. 
Assume that  
$\displaystyle\inf_{B_{R}(z_o)\times\left[\frac{2R^2}{\eta^2}, \frac{\tau R^2}{\eta^2} \right]}u\leq 1$ and 
$$ R^2\left( \fint_{B_{\frac{49}{\eta^3} R}(z_o)\times\left(-3R^2,\frac{\tau R^2}{\eta^2}\right]}|f^+|^{n+1}\right)^{\frac{1}{n+1}} \leq  \ve'_\eta$$
for a uniform constant $0<\ve'_\eta<1.$ 
Let $r>0$ satisfy $\left(\frac{\eta}{2}\right)^{N}R\leq r <\left(\frac{\eta}{2}\right)^{N-1}R$ for some $N\in\N$  and let $(z_1,t_1)$ be a point such that $d(z_o,z_1)<R$ and $|t_1|< R^2$. 
  Then there exists a uniform constant $ M'_\eta>1$ (independent of $r, N, z_1$ and $t_1$)  such that 
  \begin{equation*}
\frac{\left|\left\{u\leq  {M'_{\eta}}^{N+2} \right\}\cap K_{\eta r}(z_1,t_1)\right|}{\left|K_{\alpha_1r,\,\alpha_2r^2}(z_1 , t_1+4r^2)\right|}\geq \mu_\eta>0,\end{equation*}
where $0<\mu_\eta<1$  is the constant in Lemma \ref{lem-decay-est-1-step}.
\end{prop}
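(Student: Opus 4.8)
The plan is to reduce Proposition~\ref{cor-decay-2} to a finite chain of applications of Corollary~\ref{cor-decay-1}, after first rescaling and relocating so that the point $(z_1,t_1)$ plays the role of the base point. The key observation is that $d(z_o,z_1)<R$ and $|t_1|<R^2$, together with $0<\eta<\frac12$, guarantee that the nested cylinders $K_{\alpha_1\overline R_i,\,\alpha_2\overline R_i^2}(z_1,t_1+\overline t_i)$ built from the dyadic radii $\overline R_i:=(2/\eta)^{i-1} r$ — run from $i=1$ up to roughly $i=N+2$, which is the number of steps needed for the outermost radius $\overline R_{N+2}\asymp (2/\eta)^{N+1} r \asymp (2/\eta)^2 R$ to reach a fixed multiple of $R$ — all sit inside the large cylinder $B_{\frac{49}{\eta^3}R}(z_o)\times(-3R^2,\tau R^2/\eta^2]$ on which $u$ and $f$ are controlled. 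One checks the spatial containment via the triangle inequality ($d(z_o,z_1)+\alpha_1\overline R_{N+2}\lesssim R+\frac{11}{\eta}\cdot\frac{4R}{\eta^2}<\frac{49}{\eta^3}R$, with room to spare because $\eta<\frac12$) and the temporal containment by tracking $t_1+\overline t_i$ against the interval $(-3R^2,\tau R^2/\eta^2]$, using $\tau\ge 3$ at the top end and $|t_1|<R^2$ together with $\overline t_i>0$ at the bottom.

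First I would invoke Lemma~\ref{lem-ext-time} (with base point $z_o$, the ball of the appropriate radius, $A=3$, and $D$ chosen so that $DR^2$ exceeds $\tau R^2/\eta^2$) to extend $u$ forward in time to a sequence $u_k$ of nonnegative smooth functions satisfying $\sL u_k\le g_k$ on a full cylinder, with the same $L^{n+1}$-smallness of $g_k^+$; this is harmless because the conclusion is a measure lower bound on a set contained in the original time range, and local uniform convergence $u_k\to u$ lets us pass to the limit at the end (together with the remark following Corollary~\ref{cor-decay-1} that the statement is stable under enlarging $M_\eta$). Working with a fixed $u_k$, I then apply Corollary~\ref{cor-decay-1} with the chain length $k:=N+2$, base point $z_1$, base radius $r$, and the infimum hypothesis: the assumption $\inf_{B_R(z_o)\times[2R^2/\eta^2,\tau R^2/\eta^2]}u\le 1$ furnishes a point in one of the cylinders $K_{2\overline R_i}(z_1,t_1+\overline t_i)$ at which $u\le 1$, because for $i$ near $N+2$ the cylinder $K_{2\overline R_i}(z_1,t_1+\overline t_i)$ contains $B_R(z_o)\times[\text{something}\subset 2R^2/\eta^2,\tau R^2/\eta^2]$ once the radii and time offsets are large enough — here one uses $\overline t_{N+2}\asymp 4\overline R_{N+2}^2\asymp 16R^2/\eta^2$ landing in the window $[2R^2/\eta^2,\tau R^2/\eta^2]$ since $\tau\le 16$, which is exactly why the hypothesis restricts to $\tau\in[3,16]$.

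The hypothesis of Corollary~\ref{cor-decay-1} that must be verified step by step is the scale-by-scale smallness $\overline R_i^2\big(\fint_{K_{\alpha_1\overline R_i,\alpha_2\overline R_i^2}}|g_k^+|^{n+1}\big)^{1/(n+1)}\le \ve_\eta M_\eta^{k-i}$. Since all these cylinders lie inside the big cylinder, the numerator is bounded by $\|g_k^+\|_{L^{n+1}(\text{big cylinder})}$, while the denominator $|K_{\alpha_1\overline R_i,\alpha_2\overline R_i^2}|\asymp |B_{\overline R_i}(z_1)|\,\overline R_i^2$ grows like a fixed power of $\overline R_i$ by Bishop's volume comparison (Lemma~\ref{lem-jac-bishop}(ii)); comparing this growth in $\overline R_i\asymp (2/\eta)^{i-1} r$ against the geometric weight $M_\eta^{k-i}$ on the right shows the required inequality holds provided the global quantity $R^2(\fint_{\text{big}}|f^+|^{n+1})^{1/(n+1)}$ is smaller than a uniform $\ve'_\eta$ — which is precisely the standing hypothesis. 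This bookkeeping, matching the volume-doubling exponent against the $M_\eta$-geometric factor while absorbing the ratio $R/r\asymp(2/\eta)^{N}$, is the main technical obstacle; it is routine in spirit but requires care to make sure the constant $\ve'_\eta$ can be chosen independent of $N$, $r$, $z_1$, $t_1$. With the chain hypothesis verified, Corollary~\ref{cor-decay-1} applied at scale $r$ (its conclusion is stated relative to $K_{\alpha_1 r,\alpha_2 r^2}(z_1,t_1+4r^2)$ and $K_{\eta r}(z_1,t_1)$) gives the density bound with $h\,M_\eta^{k}=M_\eta^{N+2}$; finally, letting $k\to\infty$ in the extension and renaming $M'_\eta:=M_\eta$ (enlarged if necessary) yields the stated conclusion.
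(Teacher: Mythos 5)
Your proposal follows essentially the same route as the paper's proof: extend $u$ forward in time via Lemma \ref{lem-ext-time}, apply Corollary \ref{cor-decay-1} with chain length $N+2$ based at $(z_1,t_1)$ and radius $r$, verify the infimum hypothesis by showing the top cylinders of the chain contain $B_R(z_o)\times[2R^2/\eta^2,\tau R^2/\eta^2]$, and verify the scale-by-scale smallness by volume comparison, absorbing the per-step growth $(2/\eta)^{n+2}$ of the volume ratio by enlarging $M'_\eta$ (not merely by shrinking $\ve'_\eta$), exactly as in the paper. One small correction: the chain cylinders $K_{\alpha_1\overline r_i,\,\alpha_2\overline r_i^2}(z_1,\overline t_i)$ do \emph{not} all lie in the original time interval $(-3R^2,\tau R^2/\eta^2]$ --- the top time $\overline t_{N+2}$ can be of order $R^2/\eta^4$ --- which is precisely why the extension of Lemma \ref{lem-ext-time} must reach up to a time of that order rather than just past $\tau R^2/\eta^2$ as you state.
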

\begin{proof} (i)  From Lemma \ref{lem-ext-time}, we approximate $u$ by nonnegative smooth functions $u_k,$ which  are  defined on $ B_{\frac{48}{\eta^3} R}(z_o)\times\left(-3R^2,\frac{64R^2}{(4-\eta^2)\eta^6}+R^2\right]. $ We  find  functions $u_k$ and $g_k$ such that $u_k$ converges locally uniformly to $u$ in $B_{\frac{48}{\eta^3} R}(z_o)\times\left(-3R^2, \frac{\tau R^2}{\eta^2}\right],$  and satisfies $$\sL u_k\leq g_k\quad\mbox{in}\,\,\,B_{\frac{48}{\eta^3} R}(z_o)\times\left(-3R^2,\frac{64R^2}{(4-\eta^2)\eta^6}+R^2\right], $$   and 
$$ R^2\left( \fint_{B_{\frac{48}{\eta^3} R}(z_o)\times\left(-3R^2,\frac{64R^2}{(4-\eta^2)\eta^6}+R^2\right]} |g_k^+|^{n+1}\right)^{\frac{1}{n+1}} \leq \frac{49}{48}\ve'_\eta< 2\ve'_\eta $$ by using the volume comparison theorem and Lemma \ref{lem-ext-time}.  
For a small $\delta>0$,  we consider  $w_k:=\displaystyle\frac{u_k}{1+\delta}$ and then   for large $k,$ $w_k$ satisfies  $\displaystyle\inf_{B_{R}(z_o)\times\left[\frac{2R^2}{\eta^2}, \frac{\tau R^2}{\eta^2} \right]}w_k\leq 1,$  $\sL w_k\leq g_k\,\,\,\mbox{in}\,\,\,B_{\frac{48}{\eta^3} R}(z_o)\times\left(-3R^2,\frac{64R^2}{(4-\eta^2)\eta^6}+R^2\right], $  and 
$$ R^2\left( \fint_{B_{\frac{48}{\eta^3} R}(z_o)\times\left(-3R^2,\frac{64R^2}{(4-\eta^2)\eta^6}+R^2\right]} |g_k^+|^{n+1}\right)^{\frac{1}{n+1}} < 2\ve'_\eta, $$
 according to the local uniform convergence of $u_k$ to $u$ in Lemma \ref{lem-ext-time}.  So if we show the proposition  for $w_k$,  the local uniform convergence will imply that the result holds for $u$  by letting $k\to+\infty$ and $\delta\to0$. 
 Now we assume that $u$ is a nonnegative smooth function in $ B_{\frac{48}{\eta^3} R}(z_o)\times\left(-3R^2,\frac{64R^2}{(4-\eta^2)\eta^6}+R^2\right]$ satisfying the same hypotheses  as $w_k.$ 

(ii) We use  Corollary \ref{cor-decay-1} so we need to check the two  hypotheses with $k=N+2$ and $h=1.$ As in the corollary,   we define  for $i\in \N$,   $$\overline r_i:=\left(\frac{2}{\eta}\right)^{i-1} r\quad \mbox{and}\quad \overline t_i:=t_1+\sum_{j=1}^i 4\overline r_j^2. $$
Using the conditions on $r, $ $z_1$,  and $t_1,$  simple computation says that  for $0<\eta<1/2,$
\begin{align*}&B_{2\overline r_{N+1}}(z_1)\supset B_{2R}(z_1)\supset B_{R}(z_o), 
\\&\overline t_N  <R^2+\frac{16R^2}{4-\eta^2}<\frac{2R^2}{\eta^2} <\frac{16R^2}{\eta^2}< -R^2+ \frac{4(4+\eta^2)R^2}{\eta^2}  < \overline t_{N+2}.\end{align*}  Thus 
  we have $ B_{2\overline r_{N+1}}(z_1)\times(\overline t_N,\overline t_{N+2})\supset B_{R}(z_o)\times\left[\frac{2R^2}{\eta^2}, \frac{16R^2}{\eta^2} \right]\supset B_{R}(z_o)\times\left[\frac{2R^2}{\eta^2}, \frac{\tau R^2}{\eta^2} \right]$ for $0<\eta<\frac{1}{2}$  
and hence  $\displaystyle\inf_{\bigcup_{i=1}^{N+2}K_{2\overline r_i}(z_1,\overline t_i)}u\leq \inf_{\bigcup_{i=N+1}^{N+2}K_{2\overline r_i}(z_1,\overline t_i)}u\leq1$. We remark that $\overline r_{N+2} $ is comparable to $R$. 

Now, it suffices to show 
 for some large $ M'_\eta\geq M_\eta$, and small $0<\ve'_\eta<\ve_\eta,$  we have
\begin{equation}\label{eq-cor-decay-2-f}
\overline r_i^2\left(\fint_{K_{\alpha_1\overline r_i , \alpha_2\overline r_i^2}(z_1,\overline t_i)}|f^+|^{n+1}\right)^{\frac{1}{n+1}}\leq \ve_\eta  { M'_{\eta}}^{N+2-i},\quad\forall 1\leq i\leq N+2,\end{equation} where $M_\eta$ and $\ve_\eta$ are the constants in Corollary \ref{cor-decay-1}. 
 We notice that  $
 B_{\beta_1\overline r_{N+2}}(z_o)\subset
  B_{\alpha_1\overline r_{N+2}}(z_1)\subset B_{\frac{12}{\eta}\cdot\frac{4}{\eta^2}R}(z_o) $ and $$\bigcup_{i=1}^{N+2}K_{\alpha_1\overline r_i , \alpha_2\overline r_i^2}(z_1,\overline t_i)\subset B_{\frac{48}{\eta^3} R}(z_o)\times\left(-3R^2,\frac{64R^2}{(4-\eta^2)\eta^6}+R^2\right]$$  since $d(z_o,z_1)<R,  |t_1|< R^2$ and $\frac{2}{\eta}R\leq  \overline r_{N+2} <\frac{4}{\eta^2}R$. 
 Then for $i=1,2,\cdots, N+2$,  we have
\begin{align*}
\overline r_i^{2(n+1)}\fint_{K_{\alpha_1\overline r_i , \alpha_2\overline r_i^2}(z_1,\overline t_i)}|f^+|^{n+1}
&
\leq  \left(\frac{4}{\eta^2}\right)^{2(n+1)}\frac{R^{2(n+1)}}{|K_{\alpha_1\overline r_i, \alpha_2\overline r_i^2}(z_1,\overline t_i)|}||f^+||^{n+1}_{L^{n+1}\left(B_{\frac{48}{\eta^3} R}(z_o)\times\left(-3R^2,\frac{64R^2}{(4-\eta^2)\eta^6}+R^2\right]\right)} 
\\&\leq \left(\frac{4}{\eta^2}\right)^{2(n+1)}(2\ve'_\eta)^{n+1} \frac{\left|B_{\frac{48}{\eta^3} R}(z_o)\times\left(-3R^2,\frac{64R^2}{(4-\eta^2)\eta^6}+R^2\right]\right|}{|K_{\alpha_1\overline r_i , \alpha_2\overline r_i^2}(z_1,\overline t_i)|}
\\&\leq C(n,\eta) {\ve'_\eta}^{n+1} \frac{| B_{\frac{48}{\eta^3}  R}(z_o)| R^2}{|B_{\alpha_1\overline r_{i}}(z_1)| \overline r_{i}^2}
\leq C(n,\eta) {\ve'_\eta}^{n+1} \frac{\left| B_{\beta_1\overline r_{N+2}}(z_o)\right| {\overline r_{N+2}^2}}{|B_{\alpha_1\overline r_{i}}(z_1)| \overline r_{i}^2},
\end{align*} 
where we use that $\frac{2}{\eta}R\leq \overline r_{N+2} <\frac{4}{\eta^2}R$ and  the volume comparison theorem in the last inequality and the constant $C(n,\eta)>0$ depending only on $n$ and $\eta,$ may change from line to line. Since $d(z_o,z_1)<R$,  we use  the volume comparison theorem again to obtain
\begin{align*}
\overline r_i^{2(n+1)}\fint_{K_{\alpha_1\overline r_i , \alpha_2\overline r_i^2}(z_1,\overline t_i)}|f^+|^{n+1}
& \leq C(n,\eta) {\ve'_\eta}^{n+1} \frac{\left| B_{\beta_1\overline r_{N+2}}(z_o)\right| {\overline r_{N+2}^2}}{|B_{\alpha_1\overline r_{i}}(z_1)| \overline r_{i}^2}\\&\leq C(n,\eta) {\ve'_\eta}^{n+1} \frac{\left| B_{\alpha_1\overline r_{N+2}}(z_1)\right|\overline r_{N+2}^2}{|B_{\alpha_1\overline r_{i}}(z_1)| \overline r_{i}^2}\\&\leq C(n,\eta){\ve'_\eta}^{n+1}\left(\frac{\overline r_{N+2}}{\overline r_i}\right)^{n+2}\leq C(n,\eta){\ve'_\eta}^{n+1}\left(\frac{2}{\eta}\right)^{(n+2)(N+2-i)}.
\end{align*}
We select $ M'_\eta>M_\eta$ large  and $0< \ve'_\eta<\ve_\eta$ small enough to satisfy 
$$C(n,\eta){\ve'_\eta}^{n+1} \left(\frac{2}{\eta}\right)^{(n+2)(N+2-i)}\leq \ve_\eta^{n+1} { M'_{\eta}}^{(n+1)(N+2-i)},\,\,\,\forall 1\leq i\leq N+2, $$ which proves \eqref{eq-cor-decay-2-f}. 
Therefore, Corollary \ref{cor-decay-1} (after translation in time by $t_1$) gives   \begin{equation*}
\frac{\left|\left\{u\leq {M'_{\eta}}^{N+2} \right\}\cap K_{\eta r}(z_1,t_1)\right|}{\left|K_{\alpha_1r,\,\alpha_2r^2}(z_1 , t_1+4r^2)] \right|}\geq \mu_\eta>0.\end{equation*}
\end{proof}

\section{Parabolic version of the Calder\'{o}n-Zygmund decomposition}\label{sec-cz}
Throughout this section, we assume that a complete Riemannian manifold $M$ satisfies the condition \eqref{cond-M-1}. 
We introduce a parabolic version of the Calder\'{o}n-Zygmund lemma ( Lemma \ref{lem-cz} )  to prove power decay of super-level sets in Lemma \ref{lem-decay-est} (see   \cite{W, Ca, CC}).
Christ \cite{Ch} proved that the following theorem holds for  so-called "spaces of homogeneous type",  which is a generalization of Euclidean dyadic decomposition. In harmonic analysis, a metric space  $X$ is called  a space of homogeneous type when $X$ equips a nonnegative Borel measure $\nu$ satisfying the doubling property
$$\nu(B_{2R}(x))\leq  A_1 \nu(B_R(x)) <+\infty,\,\,\,\,\forall x\in X,\,\, R>0, $$
for some constant $A_1$ independent of $x$ and $R$.  From Bishop's volume comparison (Lemma \ref{lem-jac-bishop}), a complete Riemannian manifold  $M$ satisfying the condition \eqref{cond-M-1} is a space of homogeneous type with $A_1=2^n$.

\begin{thm}[Christ]\label{thm-christ}
There exist a countable collection $\{Q^{k,\alpha}\subset M : k\in\Z,\alpha\in I_k\}$ of open subsets of $M$ and positive constants $0<\delta_0<1$, $c_1$ and $c_2$ (with $2c_1\leq c_2$ ) that depend only on $n$, such that
\begin{enumerate}[(i)]
\item $\left| M \backslash \bigcup_{\alpha}Q^{k,\alpha}\right|=0$ for $k\in\Z$,
\item if $l\leq k$, $\alpha\in I_k$, and $\beta\in I_l$, then either $Q^{k,\alpha}\subset Q^{l,\beta}$ or $Q^{k,\alpha}\cap Q^{l,\beta}=\emptyset$,
\item for any $(k,\alpha)$ and any $l<k$, there is a unique $\beta$ such that $Q^{k,\alpha}\subset Q^{l,\beta}$,
\item $\text{diam}(Q^{k,\alpha})\leq c_2\delta^k_0 $,
\item any $Q^{k,\alpha}$ contains some ball $B_{c_1\delta^k_0}(z^{k,\alpha})$.
\end{enumerate}
\end{thm}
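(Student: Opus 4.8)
The statement is precisely M.~Christ's dyadic-cube lemma for spaces of homogeneous type, so the plan is to deduce it from \cite{Ch} once the hypothesis is in place. The only thing that needs checking is that $(M,d,\vol)$ is a space of homogeneous type, i.e.\ that $\vol$ satisfies the doubling property with a constant independent of center and radius; but this is exactly Bishop's volume comparison, Lemma~\ref{lem-jac-bishop}(ii), which under condition \eqref{cond-M-1} gives $\vol(B_{2R}(x))\le 2^n\vol(B_R(x))$ for all $x\in M$ and $R>0$. Thus $A_1=2^n$ depends only on $n$, and Christ's construction applies verbatim; all the constants $\delta_0,c_1,c_2$ it produces inherit dependence only on $n$.

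For completeness I would recall the construction. Fix $\delta_0\in(0,1)$ small (to be chosen depending on $n$). For each $k\in\Z$ pick a maximal $\delta_0^k$-separated subset $\{z^{k,\alpha}\}_{\alpha\in I_k}$ of $M$: maximality forces the balls $B_{\delta_0^k}(z^{k,\alpha})$ to cover $M$, while separation together with doubling bounds the overlap of the $\{B_{2\delta_0^k}(z^{k,\alpha})\}$ by a constant $C(n)$. Next assign to each $(k{+}1,\beta)$ a parent $\pi(k{+}1,\beta)=\alpha\in I_k$ with $d(z^{k+1,\beta},z^{k,\alpha})<\delta_0^k$ (possible since the scale-$k$ balls cover), and compose these to obtain compatible ancestor maps $\pi_{l,k}\colon I_l\to I_k$ for $l\ge k$. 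One then defines the cubes from a refined nearest-point (Voronoi) assignment: fix a well-ordering of $\bigsqcup_k I_k$, at each scale $l$ let $\widetilde Q^{l,\beta}$ consist of the points of $M$ whose nearest center among $\{z^{l,\cdot}\}$ is $z^{l,\beta}$ (ties broken by the ordering), and set $Q^{k,\alpha}$ to be the interior of the set obtained by intersecting, over $l\ge k$, the unions $\bigcup_{\pi_{l,k}(\beta)=\alpha}\widetilde Q^{l,\beta}$. Properties (ii) and (iii) are then immediate from the compatibility of the $\pi_{l,k}$; property (i) holds because the ``boundary layers'' of the Voronoi pieces are $\vol$-null; and property (iv) follows by summing the parent displacements geometrically, $\mathrm{diam}(Q^{k,\alpha})\le 2\sum_{l\ge k}\delta_0^l\le c_2\delta_0^k$.

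The delicate point—and the step I expect to be the real obstacle—is establishing (v) together with (iv): one must guarantee that successive refinements never carve into a fixed ball $B_{c_1\delta_0^k}(z^{k,\alpha})$ around the generating center, while still keeping the cube confined to scale $\delta_0^k$. This is where $\delta_0$ must be taken small relative to the doubling constant $2^n$, and it is handled in \cite{Ch} by a measure-theoretic argument showing that the set of points whose Voronoi membership is ``unstable'' under passage to finer scales forms a layer of exponentially small relative measure; iterating this estimate pins a definite ball inside $Q^{k,\alpha}$. I would simply invoke that argument rather than reproduce it, noting that every quantitative input it uses is the volume doubling of Lemma~\ref{lem-jac-bishop}(ii), so the resulting $c_1,c_2,\delta_0$ are uniform in the sense of this paper (depending only on $n$).
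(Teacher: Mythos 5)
Your proposal matches the paper exactly: the paper gives no proof of this theorem but simply quotes Christ's result for spaces of homogeneous type, after observing (as you do) that Bishop's volume comparison under condition \eqref{cond-M-1}, i.e.\ Lemma \ref{lem-jac-bishop}(ii), makes $(M,d,\vol)$ doubling with constant $A_1=2^n$ depending only on $n$. Your additional sketch of Christ's construction is a bonus beyond what the paper records, but the logical route is identical.
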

For convenience, we will use the following notation. 
\begin{definition}[Dyadic cubes on  $M$]
\label{def-d-cube-m}\item
 \begin{enumerate}[(i)]
\item  The open set $ Q=Q^{k,\alpha}$ in Theorem \ref{thm-christ}  is called  a dyadic cube of generation $k$ on $M$. 
From the property (iii) in Theorem \ref{thm-christ},  for any $(k,\alpha)$,  there is  a unique $\beta$ such that 
$Q^{k,\alpha}\subset Q^{k-1, \beta}$. We call $Q^{k-1,\beta}$ the predecessor of $Q^{k,\alpha}$. 
When $Q:=Q^{k,\alpha}$,  we denote the predecessor $Q^{k-1,\beta}$ by $\widetilde Q$ for simplicity.
\item For a given $R>0$,  we define $k_R\in\N$ to satisfy
\begin{equation*}
c_2\delta_0^{k_R-1}<R\leq c_2\delta_0^{k_R-2}.
\end{equation*}
  \end{enumerate}
\end{definition}
 The number $k_R$ means that a dyadic cube of generation $k_R$ is comparable to a ball of radius $R$.

 For the rest of the paper, we fix  some small numbers;
$$\delta:=\frac{2c_1}{c_2}\delta_0\in(0,\delta_0),\,\ \delta_1:= \frac{\delta_0(1-\delta_0)}{2}\in\left(0,\frac{\delta_0}{2}\right),
$$
$${\eta}:=\min(\delta,\delta_1)\in\left(0,\frac{1}{2}\right)\quad\mbox{and}\quad{\kappa}:= \frac{\eta}{2} \sqrt{1-\delta_0^2} . $$

By using the dyadic decomposition of a manifold $M$,  we have the following decomposition of $M\times (T_1,T_2]$ in space and time. For time variable, we take the standard euclidean dyadic decomposition.
\begin{lemma}[]\label{lem-decomp-x-t} 
There exists a countable collection $\{K^{k,\alpha}\subset M\times (T_1,T_2]: k\in\Z,\alpha\in J_k\}$ of   subsets of $M\times (T_1,T_2]\subset M\times\R$ and positive constants $0<\delta_0<1$, $c_1$ and $c_2$ (with $2c_1\leq c_2$)  that depend only on $n,$ such that
\begin{enumerate}[(i)]
\item $\left| M\times(T_1,T_2]\backslash \bigcup_{\alpha}K^{k,\alpha}\right|=0$ for $k\in\Z$,
\item if $l\leq k$, $\alpha\in J_k$, and $\beta\in J_l$, then either $K^{k,\alpha}\subset K^{l,\beta}$ or $K^{k,\alpha}\cap K^{l,\beta}=\emptyset$,
\item for any $(k,\alpha)$ and any $l<k$, there is a unique $\beta$ such that $K^{k,\alpha}\subset K^{l,\beta}$,
\item $\text{diam}(K^{k,\alpha})\leq c_2\delta^k_0\times c_2^2\delta_0^{2k}$,
\item any $K^{k,\alpha}$ contains some cylinder $B_{c_1\delta^k_0}(z^{k,\alpha})\times (t^{k,\alpha}-c_1^2\delta_0^{2k},t^{k,\alpha}]$.
\end{enumerate}
\end{lemma}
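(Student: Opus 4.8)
The plan is to realize the decomposition of $M\times(T_1,T_2]$ as a product: the spatial factors are taken verbatim from Christ's cubes $\{Q^{k,\alpha}\}$ of Theorem~\ref{thm-christ}, and these are crossed with a nested family of half-open time intervals partitioning $(T_1,T_2]$, parabolically rescaled so that a cube of generation $k$ is paired with a time interval of length comparable to $\delta_0^{2k}$. Since every spatial property needed is already contained in Theorem~\ref{thm-christ}, the only genuine work is to build the temporal partitions with the matching scaling and to check that forming products preserves the dyadic structure (nesting, unique predecessor, covering up to a null set). An alternative would be to apply Theorem~\ref{thm-christ} directly to the space of homogeneous type $(M\times\R,\,d_{\mathrm{par}},\,\mathrm{vol}\otimes\mathcal L^1)$ with $d_{\mathrm{par}}((x,t),(y,s)):=\max\{d(x,y),|t-s|^{1/2}\}$, but the product construction yields the stated form with the \emph{same} constants $\delta_0,c_1,c_2$ more transparently.

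First I would construct the time partitions by induction on the generation $k$. For all sufficiently negative $k$ (so that $c_2^2\delta_0^{2k}$ already exceeds $|T_2-T_1|$) let the generation-$k$ partition be the trivial one $\{(T_1,T_2]\}$; this is the exact temporal analogue of the behavior of Theorem~\ref{thm-christ} at coarse generations, and for these $k$ properties (iv)--(v) are understood in the same conventional sense. Given the generation-$(k-1)$ partition, in which each interval $J$ has length in $[c_1^2\delta_0^{2(k-1)},\,c_2^2\delta_0^{2(k-1)}]$, split each $J$ into $m(J):=\lceil |J|/(c_2^2\delta_0^{2k})\rceil$ equal half-open subintervals and declare these to be the generation-$k$ intervals. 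By construction each generation-$k$ interval lies in a unique generation-$(k-1)$ interval, so the partitions form a nested refining family indexed by $k$.

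Next I would set $J_k$ to index pairs $\alpha=(a,I)$ with $Q^{k,a}$ a generation-$k$ cube of Theorem~\ref{thm-christ} and $I$ a generation-$k$ time interval, and put $K^{k,\alpha}:=Q^{k,a}\times I$. Then (i) follows from $|M\setminus\bigcup_a Q^{k,a}|=0$ together with the fact that the intervals $I$ partition $(T_1,T_2]$ (Fubini); (ii) follows because for $l\le k$ the spatial factors are nested or disjoint by Theorem~\ref{thm-christ}(ii) and the temporal factors are nested or disjoint as members of refining partitions, so the same dichotomy holds for the products; (iii) follows by pairing the unique generation-$l$ cube containing $Q^{k,a}$ (Theorem~\ref{thm-christ}(iii)) with the unique generation-$l$ interval containing $I$; (iv) holds since the spatial diameter is $\le c_2\delta_0^k$ by Theorem~\ref{thm-christ}(iv) and $|I|\le c_2^2\delta_0^{2k}$ by construction; and (v) holds since Theorem~\ref{thm-christ}(v) gives a ball $B_{c_1\delta_0^k}(z^{k,a})\subset Q^{k,a}$ while, writing $I=(t^{k,\alpha}-|I|,t^{k,\alpha}]$, the lower bound $|I|\ge c_1^2\delta_0^{2k}$ yields $(t^{k,\alpha}-c_1^2\delta_0^{2k},t^{k,\alpha}]\subset I$, so the asserted cylinder is contained in $K^{k,\alpha}$.

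The one point that is not pure bookkeeping is the scale matching in the inductive step, i.e.\ verifying that subdividing a generation-$(k-1)$ interval by the bounded integer factor $m(J)$ keeps the resulting lengths trapped in the parabolic window $[c_1^2\delta_0^{2k},\,c_2^2\delta_0^{2k}]$ for every $k$ at once. The upper bound is immediate from the choice of $m(J)$; for the lower bound one computes $|J|/m(J)\ge c_2^2\delta_0^{2k}/\bigl(1+(c_2/c_1)^2\delta_0^2\bigr)$, which is $\ge c_1^2\delta_0^{2k}$ precisely when $(c_1/c_2)^2\le 1-\delta_0^2$; this is guaranteed by $2c_1\le c_2$ once $\delta_0$ is small enough, and if the $\delta_0$ furnished by Theorem~\ref{thm-christ} is not small one first passes to the sparser dyadic family obtained by keeping only every $N$-th generation (replacing $\delta_0$ by $\delta_0^N$), which is still of the required type. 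Everything else is inherited directly from Theorem~\ref{thm-christ} and from the elementary theory of refining interval partitions.
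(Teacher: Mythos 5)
Your proof reaches the right conclusion but by a genuinely different (and heavier) route than the paper's. The paper does not refine the time partitions inductively at all: for \emph{each} $k$ independently it splits $(T_1,T_2]$ into $2^{2N_k}$ equal subintervals, where $N_k\in\Z$ is chosen to be the largest integer with $\tfrac14 c_2^2\delta_0^{2k}\le (T_2-T_1)/2^{2N_k}$. This forces the generation-$k$ length into $[\tfrac14 c_2^2\delta_0^{2k},\,c_2^2\delta_0^{2k})$; nesting across generations is then automatic because $N_k$ is nondecreasing in $k$ and the subinterval counts are all powers of $4$, so one partition exactly refines the other; and property (v) falls out immediately because $\tfrac14 c_2^2\ge c_1^2$ is precisely the hypothesis $2c_1\le c_2$, with no side condition on $\delta_0$.

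Your inductive, ceiling-based subdivision does produce the correct upper bound $\le c_2^2\delta_0^{2k}$ by construction, but the lower bound that comes out of it is $c_2^2\delta_0^{2k}/\bigl(1+(c_2/c_1)^2\delta_0^2\bigr)$, which only dominates $c_1^2\delta_0^{2k}$ when $(c_1/c_2)^2\le 1-\delta_0^2$. You spot this and correctly note it can be arranged by sparsifying the Christ family (replacing $\delta_0$ by $\delta_0^N$ and relabeling, which indeed preserves properties (i)--(v) of Theorem~\ref{thm-christ} and leaves $c_1,c_2$ unchanged), but this is exactly the extra step the paper's power-of-four device avoids: in the paper, $2c_1\le c_2$ is used \emph{once}, verbatim, and no constraint on $\delta_0$ ever appears. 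The remaining checks you make (forming products of Christ cubes with nested time intervals, and verifying (i)--(v) componentwise) are the easy bookkeeping and match the paper's intent; your alternative remark about applying Christ's theorem directly to $(M\times\R,d_{\mathrm{par}},\vol\otimes\mathcal L^1)$ is also a legitimate route, though as you say it would not yield the explicit product form. In short: correct, and essentially the same product strategy, but your time construction needs an extra normalization step that the paper's dyadic (power-of-four) choice makes unnecessary.
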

\begin{proof} To decompose in time variable, for each $k\in\Z$,  we select the largest  integer $N_k\in\Z$ to satisfy
$$ 
\frac{1}{4}c_2^2 \delta_0^{2k}\leq\frac{T_2-T_1}{2^{2N_k}}< c_2^2\delta_0^{2k}. $$  
For $k$-th generation, we split the interval $(T_1,T_2]$ into $2^{2N_k}$ disjoint  subintervals which have the same length. Then  we obtain $|J_k|=|I_k|\cdot 2^{2N_k}$ disjoint  subsets on $M\times(T_1,T_2]$ satisfying  properties  (i)-(v).
\end{proof}

For the rest of this section, let  $\{K^{k,\alpha}\subset M\times (T_1,T_2]: k\in\Z,\alpha\in J_k\}$ be the parabolic dyadic decomposition of $M\times(T_1,T_2]$ as in Lemma \ref{lem-decomp-x-t}.
\begin{definition}[Parabolic dyadic cubes ]\item
 \begin{enumerate}[(i)]
\item  $ K=K^{k,\alpha}$ is called  a parabolic dyadic cube of generation $k$. If $ K:=K^{k,\alpha}\subset K^{k-1,\beta}=:\widetilde{K}$, we say
$\widetilde{K}$ is the predecessor of $K$.
\item For a parabolic dyadic cube $K$ of generation $k$,  we define $l(k)$ to be the length of $K$ in time variable, namely, $l(k)=\frac{T_2-T_1}{2^{2N_k}}$ for $M\times(T_1,T_2]$ in Lemma \ref{lem-decomp-x-t}.
\end{enumerate}
\end{definition}

We quote the following technical lemma proven by Cabr\'e  \cite[Lemma 6.5]{Ca}.
\begin{lemma}[Cabr\'e]\label{lem-decomp-cube-ball}
 Let $z_o\in M$ and $R>0$. Then we have the following.
  \begin{enumerate}[(i)]
 \item  If $Q$ is a dyadic cube of generation $k$ such that
 $$k\geq k_R\quad\mbox{and}\quad Q\subset B_{R}(z_o), $$
 then there exist $z_1\in Q$ and $r_k\in(0,R/2)$ such that
 \begin{equation}\label{eq-decomp-m-1}B_{\delta r_k}(z_1)\subset Q\subset\widetilde Q\subset \overline{B_{2r_k}(z_1)}\subset B_{\frac{11}{\eta}r_k}(z_1)\subset B_{\frac{11}{\eta}R}(z_o)
 \end{equation}
 and 
 \begin{equation}\label{eq-decomp-m-2}
 B_{\frac{9}{\eta}R}(z_o)\subset B_{\frac{11}{\eta}R}(z_1).\end{equation} 
In fact, for $k\geq k_R,$  the above radius $r_k$ is defined  by $$r_k:=\frac{1}{2}c_2\delta_0^{k-1}=\frac{c_1}{\delta}\delta_0^k.$$ 
 \item If $Q$ is a dyadic cube of generation $k_R$ and $d(z_o, Q)\leq \delta_1R$,  then $Q\subset B_{R}(z_o) $ and hence \eqref{eq-decomp-m-1} and \eqref{eq-decomp-m-2} hold for some $z_1\in Q$ and $r_{k_R}\in \left[\frac{\delta_0 R}{2},\frac{R}{2}\right)$. Moreover, 
 $$B_{\delta_1R}(z_o)\subset B_{2r_{k_R}}(z_1). $$
 \item There exists at least one dyadic cube $Q$ of generation $k_R$ such that $d(z_o, Q)\leq\delta_1R$.  \end{enumerate}
\end{lemma}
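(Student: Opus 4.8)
The statement is a purely metric–combinatorial property of the Christ decomposition, and the plan is to follow Cabr\'e's argument in \cite[Lemma 6.5]{Ca}, being careful to track the constants $\delta,\delta_1,\eta$ fixed above. Throughout I use only properties (i)--(v) of Theorem~\ref{thm-christ} and the elementary relations $r_k=\frac12 c_2\delta_0^{k-1}=\frac{c_1}{\delta}\delta_0^k$, $c_1\delta_0^k=\delta r_k$, and $c_2\delta_0^{k-1}=2r_k$ coming from the definition of $\delta$.

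\textbf{Part (i).} I would let $z_1:=z^{k,\alpha}$ be the center of the ball provided by property~(v), so $B_{c_1\delta_0^k}(z_1)\subset Q$; since $c_1\delta_0^k=\delta r_k$ this is exactly the first inclusion $B_{\delta r_k}(z_1)\subset Q$. The predecessor $\widetilde Q$ has generation $k-1$, so property~(iv) gives $\operatorname{diam}(\widetilde Q)\le c_2\delta_0^{k-1}=2r_k$; as $z_1\in Q\subset\widetilde Q$, every point of $\widetilde Q$ lies within $2r_k$ of $z_1$, i.e.\ $\widetilde Q\subset\overline{B_{2r_k}(z_1)}$. The inclusion $\overline{B_{2r_k}(z_1)}\subset B_{\frac{11}{\eta}r_k}(z_1)$ is trivial because $\eta<\frac12$ forces $\frac{11}{\eta}>2$. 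For the last inclusion, $z_1\in Q\subset B_R(z_o)$ gives $d(z_o,z_1)<R$, while $k\ge k_R$ together with $c_2\delta_0^{k_R-1}<R$ gives $r_k=\frac12 c_2\delta_0^{k-1}\le\frac12 c_2\delta_0^{k_R-1}<\frac R2$; hence by the triangle inequality $d(z_o,z_1)+\frac{11}{\eta}r_k<R+\frac{11}{2\eta}R\le\frac{11}{\eta}R$, using $\eta\le\frac{11}{2}$, which is $B_{\frac{11}{\eta}r_k}(z_1)\subset B_{\frac{11}{\eta}R}(z_o)$; along the way we have recorded $r_k\in(0,R/2)$. Inequality~\eqref{eq-decomp-m-2} is the same estimate: $d(z_o,z_1)+\frac9\eta R<R+\frac9\eta R\le\frac{11}{\eta}R$ since $\eta<2$.

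\textbf{Part (ii).} Here the first task is to upgrade $d(z_o,Q)\le\delta_1R$ to $Q\subset B_R(z_o)$. From the definition of $k_R$ we have $\operatorname{diam}(Q)\le c_2\delta_0^{k_R}<\delta_0R$, so for every $x\in Q$, $d(z_o,x)\le d(z_o,Q)+\operatorname{diam}(Q)<(\delta_1+\delta_0)R$, and a short computation with $\delta_1=\frac12\delta_0(1-\delta_0)$ shows $\delta_1+\delta_0<1$; hence $Q\subset B_R(z_o)$ and part~(i) applies with $k=k_R$. The range $r_{k_R}\in[\delta_0R/2,R/2)$ is read off directly from $c_2\delta_0^{k_R-1}<R\le c_2\delta_0^{k_R-2}$. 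For the extra inclusion $B_{\delta_1R}(z_o)\subset B_{2r_{k_R}}(z_1)$, I would pick $q_\ast$ in the compact set $\overline Q$ with $d(z_o,q_\ast)=d(z_o,Q)\le\delta_1R$ and estimate, for $x\in B_{\delta_1R}(z_o)$, $d(x,z_1)\le d(x,z_o)+d(z_o,q_\ast)+d(q_\ast,z_1)<2\delta_1R+\operatorname{diam}(Q)$; the claim then follows from $2\delta_1R=\delta_0(1-\delta_0)R\le c_2\delta_0^{k_R-1}(1-\delta_0)=2r_{k_R}-c_2\delta_0^{k_R}\le 2r_{k_R}-\operatorname{diam}(Q)$, where the middle step uses $\delta_0R\le c_2\delta_0^{k_R-1}$.

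\textbf{Part (iii).} This is immediate from property~(i) of Theorem~\ref{thm-christ}: $B_{\delta_1R}(z_o)$ has positive volume while $M\setminus\bigcup_\alpha Q^{k_R,\alpha}$ is null, so the two sets intersect, and any $\alpha$ for which $Q^{k_R,\alpha}$ meets $B_{\delta_1R}(z_o)$ satisfies $d(z_o,Q^{k_R,\alpha})<\delta_1R$. The only delicate point in the whole argument is the bookkeeping of constants; the single genuinely nontrivial relation is $2\delta_1=\delta_0(1-\delta_0)$ combined with $\delta_0R\le c_2\delta_0^{k_R-1}$ in part~(ii), which is exactly why $\delta_1$ was defined as $\delta_0(1-\delta_0)/2$. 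I do not anticipate any substantive obstacle beyond this.
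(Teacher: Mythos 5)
Your proof is correct. The paper does not supply an argument for this lemma; it is stated as a quotation of Cabr\'e's \cite[Lemma 6.5]{Ca}, so there is no internal proof to compare against. Your argument is the standard one: center $z_1$ from property (v), diameter bound for $\widetilde Q$ from property (iv), and triangle inequalities driven by $r_k\le r_{k_R}<R/2$ and $\eta<1/2$; the bookkeeping with $2\delta_1=\delta_0(1-\delta_0)$, $\delta_0 R\le c_2\delta_0^{k_R-1}$, and $\mathrm{diam}(Q)\le c_2\delta_0^{k_R}$ in part (ii) is exactly the computation that makes the definition of $\delta_1$ work, and part (iii) is the measure-theoretic observation from property (i) of Theorem \ref{thm-christ}. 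I checked the specific inequalities $\delta_1+\delta_0<1$ and $2\delta_1 R+\mathrm{diam}(Q)\le 2r_{k_R}$ and they hold as you state; no gaps.
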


We remark that for $k\geq k_R, $$$ \eta^2r_k^2\leq\delta_0^2r_k^2 =\frac{1}{4}c_2^2\delta_0^{2k}\leq l(k) <c_2^2\delta_0^{2k}= 4r_{k+1}^2 $$
and \eqref{eq-decomp-m-1} gives that for any $a\in\R$, 
  \begin{align}\label{eq-decomp-m-3}
  K_{\eta r_k}(z_1,a)\subset  Q\times\big( a- l(k),a\big]\subset   \overline{K_{2r_k}(z_1,a)}  
\end{align}

  \begin{definition} Let $m\in\N.$ 
 For any parabolic  dyadic cube $K:=Q\times(a-l(k),a]$ of generation $k$,   the elongation of $K$ along time in $m$ steps  (see \cite{KL}), denoted by $\overline{K}^m$, is defined by$$ \overline{K}^m:=\widetilde{Q}\times\big(a,a+m\cdot l(k-1)\big], $$
 where  $l(k)$ is the length of a parabolic dyadic  cube of generation $k$ in time and  $\widetilde{Q}$ is the predecessor of $Q$ in space. The elongation $\overline{K}^m$ is the union of the stacks of parabolic dyadic cubes congruent to the predecessor of $K$.  
  \end{definition}
 Now we have a parabolic version of Calder\'on-Zygmund lemma. The proof of lemma is the same as Euclidean case so we  refer to  \cite{W} for the proof.
 
\begin{lemma}[Lemma 3.23, \cite{W}]\label{lem-cz}
Let $K_1=Q_1\times\big(a-l(k_0),a\big]$ be a parabolic  dyadic cube of generation $k_0$
 in $M\times(T_1,T_2],$ 
and  let $0<\alpha<1$ and $m\in\N$. Let $\cA\subset K_1$ be a measurable set such that $|\cA\cap K_1|\leq \alpha|K_1|$ and let
 $$\cA^m_{\alpha}:=\cup\left\{\overline{K}^m:|K\cap \cA| > \alpha|K|,\, K,\,\text{a parabolic dyadic cube in}\,\,\,K_1\right\}\cap \left(Q_1\times\R\right). $$ 
Then, we have $$|\cA_{\alpha}^m|\geq \frac{m}{(m+1)\alpha}|\cA|. $$
\end{lemma}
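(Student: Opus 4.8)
The statement is a purely measure-theoretic Calder\'on--Zygmund stopping-time estimate relative to the parabolic dyadic filtration $\{K^{k,\alpha}\}$ of Lemma~\ref{lem-decomp-x-t}, and --- as the authors indicate --- it transfers verbatim from the Euclidean proof of \cite[Lemma~3.23]{W} once that filtration is available, which is exactly what \eqref{cond-M-1}, through Bishop's comparison in Lemma~\ref{lem-jac-bishop}, furnishes. Here is the plan.

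\textbf{Step 1 (stopping-time selection).} Since $|\cA\cap K_1|\le\alpha|K_1|$, the cube $K_1$ is not ``heavy''. Starting from $K_1$ and subdividing along the filtration, stop at a parabolic dyadic cube $K\subset K_1$ the first time the inequality $|\cA\cap K|>\alpha|K|$ occurs. One obtains a countable family $\{K_j\}$ of pairwise disjoint parabolic dyadic cubes --- disjointness being immediate from the nesting property (ii) of Lemma~\ref{lem-decomp-x-t} --- with $|\cA\cap K_j|>\alpha|K_j|$ for every $j$, while the predecessor $\widetilde K_j$ (well defined, as $K_j\subsetneq K_1$ because $K_1$ is not heavy) satisfies $|\cA\cap\widetilde K_j|\le\alpha|\widetilde K_j|$. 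Since $M$ under \eqref{cond-M-1} is a space of homogeneous type, the Lebesgue differentiation theorem holds along the dyadic filtration, so $\cA$ is covered up to a null set by the $K_j$; hence $|\cA|=\sum_j|\cA\cap K_j|$. Finally $K_j\subsetneq K_1$ forces the spatial predecessor to satisfy $\widetilde Q_j\subset Q_1$, so each elongation $\overline{K_j}^m$ lies in $Q_1\times\bR$ and therefore in $\cA^m_\alpha$ (and $K_j$ itself, being heavy, is one of the cubes entering the definition of $\cA^m_\alpha$).

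\textbf{Step 2 (measure of an elongation).} Writing $K_j=Q_j\times(a_j-l(k_j),a_j]$ of generation $k_j$, the predecessor $\widetilde K_j=\widetilde Q_j\times I$ has $|I|=l(k_j-1)$, so $|\widetilde K_j|=l(k_j-1)\,|\widetilde Q_j|$, whereas by definition $\overline{K_j}^m=\widetilde Q_j\times(a_j,a_j+m\,l(k_j-1)]$, so $|\overline{K_j}^m|=m\,|\widetilde K_j|$. Combined with $|\cA\cap K_j|\le|\cA\cap\widetilde K_j|\le\alpha|\widetilde K_j|$ this gives, for every $j$,
\[
|\overline{K_j}^m|\ \ge\ \frac{m}{\alpha}\,|\cA\cap K_j|.
\]

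\textbf{Step 3 (from the sum to the union).} Since $\cA^m_\alpha\supseteq\bigcup_j\overline{K_j}^m$ and $|\cA|=\sum_j|\cA\cap K_j|$, what remains is to pass from $\sum_j|\overline{K_j}^m|$ to $\bigl|\bigcup_j\overline{K_j}^m\bigr|$, and this is where the denominator $m+1$ is created. The elongations $\overline{K_j}^m$, being towers of $m$ copies of the corresponding predecessor stacked above disjoint cubes, genuinely overlap, and the substance of the lemma is the combinatorial estimate showing that, after grouping the selected cubes by their common predecessor and accounting for how these towers can pile up along each time fiber, only the factor $\tfrac{m}{m+1}$ is lost rather than something worse. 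This overlap bookkeeping is the main obstacle --- essentially the only place any work is needed --- and it is carried out exactly as in \cite[Lemma~3.23]{W}. Feeding it back together with Step~2 yields
\[
|\cA^m_\alpha|\ \ge\ \frac{m}{(m+1)\alpha}\sum_j|\cA\cap K_j|\ =\ \frac{m}{(m+1)\alpha}\,|\cA|,
\]
which is the claimed inequality.
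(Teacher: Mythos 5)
The paper offers no proof of this lemma at all---it says only that ``the proof is the same as the Euclidean case'' and refers to \cite{W}---so your proposal, which correctly carries out the dyadic stopping-time selection (maximal heavy cubes, disjointness, light predecessors, a.e.\ covering of $\cA$ via the dyadic Lebesgue differentiation theorem on a space of homogeneous type) and the identity $|\overline{K_j}^m|=m|\widetilde K_j|$ before deferring the final stacking estimate to \cite[Lemma 3.23]{W}, is in fact more detailed than the paper's treatment and follows the same (outsourced) route. The one step you do not prove, namely $\bigl|\bigcup_j \overline{K_j}^m\bigr|\ge \tfrac{1}{m+1}\sum_j|\overline{K_j}^m|$ for the selected family---which genuinely requires the lightness of the predecessors and is not a mere bounded-overlap count---is precisely the step the paper also outsources to \cite{W}, so there is nothing in the paper to compare it against.
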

 


\section{Harnack inequality}\label{sec-harnack}
In order to prove the parabolic Harnack  inequality, we take the approach presented in \cite{W} and iterate Lemma \ref{lem-decay-est-1-step} with Christ decomposition (Theorem \ref{thm-christ}) and Calder\'on-Zygmund type lemma (Lemma \ref{lem-cz}). 
We begin this section with recalling that $\eta\in\left(0,\frac{1}{2}\right)$ is fixed as in the previous section. So the uniform constants $\mu_\eta, \ve'_\eta$ and $M'_{\eta}$ in Proposition \ref{cor-decay-2}  are also fixed and we denote them by $\mu,\ve_0$ and $M_0$ for simplicity.

We select an integer  $m>1$ large enough to satisfy 
$$ \frac{m}{(m+1)(1-\mu)} > \frac{1}{1-\frac{\mu}{2}}, $$ where $0<\mu<1$ is the constant in Lemma \ref{lem-decay-est-1-step}. For $T_1:=-3R^2$ and $T_2:=\left(\frac{16}{\eta^2}+1+m\right)R^2 $,  we consider a parabolic dyadic decomposition of $M\times(T_1,T_2]$  in Lemma \ref{lem-decomp-x-t} and fix the decomposition for Section \ref{sec-harnack}.

\subsection{Power decay estimate of  super-level sets}\label{sec-decay}

\begin{lemma}\label{lem-decay-est}
Suppose that $M$ satisfies the conditions \eqref{cond-M-1},\eqref{cond-M-2}. Let $z_o\in M, R>0$ and  $\tau \in[3,16]. $    
Let $u$ be a nonnegative smooth function such that $\sL u\leq f $  in $
 B_{\frac{50}{\eta^3} R}(z_o)\times\left(-3R^2,\frac{\tau R^2}{\eta^2}\right]$.  
 Assume that  
$$\displaystyle\inf_{B_{R}(z_o)\times\left[\frac{2R^2}{\eta^2}, \frac{\tau R^2}{\eta^2} \right]}u\leq 1$$ and $$ R^2\left( \fint_{B_{\frac{50}{\eta^3} R}(z_o)\times\left(-3R^2,\frac{\tau R^2}{\eta^2}\right]}|f^+|^{n+1}\right)^{\frac{1}{n+1}}\leq {\ve_1} $$
 for a uniform constant $0<\ve_1<\ve_0.$ 
Let $K_1$  be a parabolic dyadic cube of generation $k_R$ such that
\[
K_1:=Q_1\times\left(t_1-l(k_R),t_1\right]\subset Q_1\times(-R^2,R^2),
\]
where  $Q_1$ is a dyadic cube of generation $k_R$ such that $d(z_o, Q_1)\leq \delta_1 R$.
Then for $i=1,2,\cdots$,  we have
\begin{equation}\label{eq-decay-est}
\frac{\left|\{u>M_1^i\}\cap K_1\right|}{|K_1|}<\left(1-\frac{\mu}{2}\right)^i,
\end{equation}
where $0<\ve_1<\ve_0$ and $M_1>0$ depend only on $n, \lambda, \Lambda$,  and $ a_L$.
\end{lemma}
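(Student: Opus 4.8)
The plan is to prove \eqref{eq-decay-est} by induction on $i$, using Proposition~\ref{cor-decay-2} (which already encodes the geometric stacking of cubes) as the basic measure–decay input and the parabolic Calder\'on--Zygmund Lemma~\ref{lem-cz} to run the iteration. As preliminaries, I fix $\ve_1<\ve_0$ small enough that the hypothesis of Proposition~\ref{cor-decay-2} on $f$ over $B_{\frac{49}{\eta^3}R}(z_o)$ follows, via Bishop's theorem (Lemma~\ref{lem-jac-bishop}), from the assumed bound over $B_{\frac{50}{\eta^3}R}(z_o)$; I also fix a uniform integer $N_0$ (the value of ``$N$'' produced by Proposition~\ref{cor-decay-2} at the scales used below) and set $M_1:=M_0^{\,N_0+2}$, or any larger uniform constant. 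For the base case $i=1$, Lemma~\ref{lem-decomp-cube-ball}(ii) provides $z_1\in Q_1$ and $r_{k_R}\in[\delta_0R/2,R/2)$ with, by \eqref{eq-decomp-m-1} and the remark around \eqref{eq-decomp-m-3}, $K_{\eta r_{k_R}}(z_1,t_1)\subset K_1\subset\overline{K_{2r_{k_R}}(z_1,t_1)}$. Since $\eta<\delta_0$ one has $(\eta/2)R\le r_{k_R}<R$, so Proposition~\ref{cor-decay-2} applies to $u$ with big scale $R$, small scale $r_{k_R}$ (hence $N=1$), the given $\tau$, and the point $(z_1,t_1)$ (note $d(z_o,z_1)<R$ and $|t_1|<R^2$ because $K_1\subset Q_1\times(-R^2,R^2)$); comparing volumes with Bishop's theorem ($\alpha_1=11/\eta>2$, $\alpha_2>4>l(k_R)/r_{k_R}^2$) gives $|K_{\alpha_1 r_{k_R},\alpha_2 r_{k_R}^2}(z_1,t_1+4r_{k_R}^2)|\ge|K_1|$, whence $|\{u\le M_1\}\cap K_1|\ge\mu|K_1|$, i.e. $|\{u>M_1\}\cap K_1|\le(1-\mu)|K_1|$, which is stronger than \eqref{eq-decay-est} for $i=1$.

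For the inductive step, assume $|\{u>M_1^{i}\}\cap K_1|\le(1-\mu)|K_1|$ together with \eqref{eq-decay-est} for that $i$ (the first bound propagates with the induction). Put $\cA:=\{u>M_1^{i+1}\}\cap K_1$; then $|\cA|\le|\{u>M_1^{i}\}\cap K_1|\le(1-\mu)|K_1|$, so Lemma~\ref{lem-cz} applies with $\alpha=1-\mu$ and the fixed $m$. The key point is the geometric sub-claim: \emph{if $K\subset K_1$ is a parabolic dyadic cube with $|K\cap\cA|>(1-\mu)|K|$, then $\overline K^m\subseteq\{u>M_1^{i}\}$.} Granting this, $\cA^m_{1-\mu}\subseteq\{u>M_1^{i}\}\cap(Q_1\times\bR)$; since $m$ was chosen with $\frac{m}{(m+1)(1-\mu)}>\frac1{1-\mu/2}$, Lemma~\ref{lem-cz} gives $|\cA|<(1-\mu/2)\,|\cA^m_{1-\mu}|$, and estimating $|\cA^m_{1-\mu}|$ by the inductive estimate applied to the parabolic cubes making up $\cA^m_{1-\mu}$ yields $|\cA|<(1-\mu/2)^{i+1}|K_1|$, while also $|\{u>M_1^{i+1}\}\cap K_1|\le(1-\mu)|K_1|$, completing the induction.

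For the sub-claim, suppose instead $u(x^*,t^*)\le M_1^{i}$ for some $(x^*,t^*)\in\overline K^m$; then $v:=u/M_1^{i}\ge0$ satisfies $\sL v\le f/M_1^{i}$ and $v(x^*,t^*)\le1$. Write $K=Q\times(b-l(k),b]$, $k\ge k_R$, and use Lemma~\ref{lem-decomp-cube-ball}(i), \eqref{eq-decomp-m-1} and \eqref{eq-decomp-m-3} to get $z_1'\in Q$ with $\widetilde Q\subset\overline{B_{2r_k}(z_1')}$, $K_{\eta r_k}(z_1',b)\subset K$, and $|K_{\eta r_k}(z_1',b)|\ge c(\eta,n)|K|$. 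Because $x^*\in\widetilde Q$, $t^*\in(b,b+m\,l(k-1)]$, and $l(k-1)$ is comparable to $r_k^2$ (by the remark $\eta^2r_k^2\le l(k)<4r_{k+1}^2$), the point $(x^*,t^*)$ lies — for a suitable scale $\rho$ comparable to $r_k$ (uniformly, so the associated power in Proposition~\ref{cor-decay-2} is the fixed $N_0+2$) and a suitable $\tau'\in[3,16]$ — inside $B_\rho(z_1')\times[\,b+\tfrac{2\rho^2}{\eta^2},\,b+\tfrac{\tau'\rho^2}{\eta^2}\,]$, while the full cylinder $B_{\frac{49}{\eta^3}\rho}(z_1')\times(b-3\rho^2,b+\tfrac{\tau'\rho^2}{\eta^2}]$ sits inside $B_{\frac{50}{\eta^3}R}(z_o)\times(-3R^2,\tfrac{\tau R^2}{\eta^2}]$ (this uses $d(z_o,Q_1)\le\delta_1R$, the radius $\tfrac{50}{\eta^3}R$ and the choice of $T_2$). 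Applying Proposition~\ref{cor-decay-2} to $v$ with these data gives $|\{v\le M_0^{\,N_0+2}\}\cap K_{\eta r_k}(z_1',b)|\ge\mu\,|K_{\alpha_1 r_k,\alpha_2 r_k^2}(z_1',b+4r_k^2)|\ge\mu|K|$, i.e. $|\{u\le M_1^{i}M_0^{\,N_0+2}\}\cap K|\ge\mu|K|$; since $M_1^{i}M_0^{\,N_0+2}\le M_1^{i+1}$, this contradicts $|K\cap\cA|>(1-\mu)|K|$, proving the sub-claim.

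The main obstacle is exactly the space--time alignment inside the sub-claim: one must verify that $m$ can be chosen large (uniformly) so that the forward elongation $\overline K^m$ of every dyadic cube $K\subset K_1$ is long enough in time to reach the ``payoff'' slab $[\,b+\tfrac{2\rho^2}{\eta^2},\,b+\tfrac{\tau'\rho^2}{\eta^2}\,]$ of Proposition~\ref{cor-decay-2} at a scale $\rho$ that is still admissible for the small scale $r_k$, while keeping $\overline K^m$ and the accompanying cylinders inside the domain on which $u$ is defined — it is precisely this requirement that pins down the constants $\alpha_1,\beta_1,\alpha_2,\beta_2$, the endpoints $T_1=-3R^2$ and $T_2=(\tfrac{16}{\eta^2}+1+m)R^2$, and the radius $\tfrac{50}{\eta^3}R$. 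A secondary bookkeeping point is that $\cA^m_{1-\mu}$ extends forward in time beyond $K_1$, so the inductive bound must be invoked on the appropriate parabolic sub-cubes and time-translates rather than on $K_1$ alone; this is again absorbed by the generous choice of $T_2$.
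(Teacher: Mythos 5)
Your base case and your overall template (Calder\'on--Zygmund plus a sub-claim about elongations) match the paper, but the inductive step has two genuine gaps, and the second one is the heart of the argument.

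First, the closing of the induction. You write that $\cA^m_{1-\mu}\subseteq\{u>M_1^i\}\cap(Q_1\times\bR)$ and that ``estimating $|\cA^m_{1-\mu}|$ by the inductive estimate applied to the parabolic cubes making up $\cA^m_{1-\mu}$ yields $|\cA|<(1-\mu/2)^{i+1}|K_1|$.'' This needs $|\cA^m_{1-\mu}|\le(1-\mu/2)^i|K_1|$, but the inductive hypothesis controls only $|\{u>M_1^i\}\cap K_1|$, while $\cA^m_{1-\mu}$ extends forward in time by up to $m\cdot l(k_R-1)\sim mR^2$, far beyond $Q_1\times(-R^2,R^2)$; the lemma's hypotheses (and in particular the relation between the cube and the fixed payoff slab $[2R^2/\eta^2,\tau R^2/\eta^2]$ where $\inf u\le 1$) fail for those time-translates, and even granting bounds of the form $(1-\mu/2)^i$ on each of the $\sim m+1$ stacked translates you would only get $|\cA^m_{1-\mu}|\lesssim(m+1)(1-\mu/2)^i|K_1|$, which does not close the induction. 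The paper's mechanism for the escaped mass is entirely different and is missing from your proposal: one bounds $|\cA^m_{1-\mu}\cap K_1|\le|\{u>M_1^i\}\cap K_1|<(1-\mu/2)^i|K_1|$ and deduces that a definite fraction $\alpha|\cA|$ of $\cA^m_{1-\mu}$ lies \emph{outside} $K_1$, hence at time at least $\tfrac{\alpha}{2}(1-\mu/2)^{i+1}l(k_R)$ beyond $t_1$; since a cube $K$ of radius $r_k$ can only send its elongation a time $m\cdot l(k-1)\le 4mr_k^2$ forward, this forces $r_k\ge(\eta/2)^{Ni}R$ for a uniform $N$, and Proposition~\ref{cor-decay-2} applied at that scale (with power $M_0^{Ni+2}\le M_1^i$ once $M_1\ge M_0^{N+2}$) contradicts $|\cA\cap K|>(1-\mu)|K|$. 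This is exactly where the $i$-dependence of the threshold $M_1^i$ is used, and no choice of $T_2$ absorbs it.

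Second, your proof of the sub-claim does not work as stated. You apply Proposition~\ref{cor-decay-2} once, at a scale $\rho$ ``comparable to $r_k$,'' to transport smallness of $u$ at $(x^*,t^*)\in\overline K^m$ back to $K$. But the proposition requires the point where $u$ is small to lie in the slab $[\,b+2\rho^2/\eta^2,\,b+\tau'\rho^2/\eta^2\,]$, whose bottom is at height $2\rho^2/\eta^2>8\rho^2$ above the cube, whereas $t^*$ may lie anywhere in $(b,\,b+m\,l(k-1)]$ with $l(k-1)\approx 4r_k^2$; for $t^*$ in the first stacks (in particular $t^*-b$ arbitrarily small) no admissible $\tau'$ and $\rho$ comparable to $r_k$ exist, and shrinking $\rho$ to fit makes the scale ratio, hence $N$ and the power $M_0^{N+2}$, unbounded. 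The paper avoids this by iterating Lemma~\ref{lem-decay-est-1-step} through the $m$ stacks of $\overline K^m$ one step at a time (each step moving down one slab of height $4r_k^2$ at the cost of a factor $M_0$), which yields the uniform loss $M_0^m$ and the conclusion $\overline K^m\subset\cB_{hM_1^i/M_0^m}$. You should replace the single application of Proposition~\ref{cor-decay-2} in the sub-claim by this $m$-fold iteration, and replace the ``inductive estimate on translates'' by the escape-forces-a-large-cube contradiction.
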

\begin{proof} (i) As Proposition \ref{cor-decay-2}, we use  Lemma \ref{lem-ext-time} to assume that
 a nonnegative smooth function $u$ defined  on $ B_{\frac{49}{\eta^3} R}(z_o)\times\left(T_1,T_2\right]$ satisfies that $\displaystyle\inf_{B_{R}(z_o)\times\left[\frac{2R^2}{\eta^2}, \frac{\tau R^2}{\eta^2} \right]}u\leq 1$ and $\sL u\leq f$ in $ B_{\frac{49}{\eta^3} R}(z_o)\times\left(T_1,T_2\right]$ for some $f$ with 
    $$ R^2\left( \fint_{B_{\frac{49}{\eta^3} R}(z_o)\times\left(T_1,T_2\right]}|f^+|^{n+1}\right)^{\frac{1}{n+1}} \leq \frac{50}{49}\ve_1<2\ve_1. $$

(ii) According to Lemma \ref{lem-decomp-cube-ball},  there exists a dyadic cube $Q_1\subset B_R(z_o)$ of generation $k_R$  such that $d(z_o, Q_1)\leq\delta_1 R$.  We  find
  $z_1\in Q_1$ and $r_{k_R}\in [\frac{\delta_0}{2}R,\frac{1}{2}R)$ satisfying 
 \eqref{eq-decomp-m-1},\eqref{eq-decomp-m-2} and  $B_{\delta_1R}(z_o)\subset B_{2r_{k_R}}(z_1)$.  Since $ \eta^2r_{k_R}^2\leq l(k_R)<4r_{k_R+1}^2=4\delta_0^2r_{k_R}^2<\delta_0^2R^2$,   we find $t_1\in(-R^2+l(k_R), R^2)$  such that   $K_1:=Q_1\times(t_1-l(k_R), t_1]$ is a parabolic dyadic cube of generation $k_R$ of  $M\times(T_1,T_2]$.  From \eqref{eq-decomp-m-3}, we also have that
   \begin{align*}
  K_{\eta r_{k_R}}(z_1,t_1) &\subset K_1 \subset \overline {K_{2r_{k_R}}(z_1,t_1) }.
\end{align*}

We use the induction  to prove \eqref{eq-decay-est} so we first check  the case $i=1$.  
We notice that $d(z_o,z_1)<R$,   $r_{k_R}\in[\frac{\delta_0}{2}R,\frac{1}{2}R)\subset(\frac{\eta}{2}R,R)$ and $|t_1|< R^2$. We set $\ve_1:=\left(\frac{3/\eta^2+3}{16/\eta^2+m+4}\right)^{\frac{1}{n+1}}\frac{\ve_0}{2}.$  
Then,   $u$ satisfies the hypotheses of Proposition \ref{cor-decay-2}  with $r=r_{k_R}$  and $N=1$,    so we   deduce that
\begin{align*}
0 <\mu&\leq\frac{\left|\{u\leq M_0^3\}\cap K_{\eta r_{k_R}}(z_1,t_1) \right|}{|K_{\alpha_1r_{k_R},\,\alpha_2r_{k_R}^2}(z_1 , t_1+4r_{k_R}^2) |}
=\frac{\left|\{u\leq M_0^3\}\cap K_{\eta r_{k_R}}(z_1,t_1) \right|}{|K_{\alpha_1r_{k_R},\,\alpha_2r_{k_R}^2}(z_1 , t_1) |}<\frac{\left|\{u\leq M_0^3\}\cap K_1\right|}{|K_1|}.
\end{align*}
Thus,  we have for $M_1\geq M_0^3$,   $$\frac{\left|\{u >M_1\}\cap K_1\right|}{|K_1|}\leq1-\mu<1-\frac{\mu}{2}. $$ 

(iii) Now, suppose that \eqref{eq-decay-est} is true for $i$,  that is, \begin{equation*} 
\frac{\left|\{u>M_1^i\}\cap K_1\right|}{|K_1|}<\left(1-\frac{\mu}{2}\right)^i.
\end{equation*}
To show the (i+1)-th step, define for $h>0$,  $$\cB_h:=\{u>h\}\cap {\color{black}B_{\frac{49}{\eta^3} R}(z_o)\times(T_1,T_2]}. $$  We know  $ \displaystyle\frac{\left|\cB_{M_1^i}\cap K_1 \right|}{|K_1|} <\left(1-\frac{\mu}{2}\right)^i$.  If $h>0$ is a constant such that 
 $$\frac{|\cA|}{|K_1|}\geq \left(1-\frac{\mu}{2}\right)^{i+1}\quad\mbox{for}\quad\cA:= \cB_{hM_1^i}\cap K_1, $$ then we will show that $h< M_1$ for a uniform constant $M_1>M_0>1$,  that will be fixed later. 


   Suppose on the contrary that $h\geq M_1$.       
   From (ii),  we  have
    $\displaystyle\frac{|\cA|}{|K_1|}\leq \frac{|\cB_{M_0^3}\cap K_1|}{|K_1|} \leq 1-\mu$ for $M_1\geq M^3_0$  and    $h\geq1$.    
Applying Lemma~\ref{lem-cz}  to $\cA$ with $\alpha=1-\mu$, it follows that 
$$|\cA_{1-\mu}^m|\geq \frac{m}{(m+1)(1-\mu)}|\cA| > \frac{1}{1-\frac{\mu}{2}}|\cA|. $$
We claim that 
\begin{equation}\label{eq-decay-est-elongation}\cA_{1-\mu}^m\subset \cB_{\frac{hM_1^i}{M_0^m}}\end{equation} for  $h\geq C_1M_0^m>1$,  where a uniform constant $C_1>0$ will be chosen. If not, there is  a point $(x_1,s_1)\in \cA^m_{1-\mu}\bs \cB_{\frac{hM_1^i}{M_0^m}}$ and we find a parabolic  dyadic cube $K:=Q\times(a-l(k),a]\subset K_1$ of generation $k(> k_R)$ such that 
$$|\cA\cap K|>(1-\mu)|K|\quad\mbox{and}\quad (x_1,s_1)\in \overline{K}^m$$ from the definition of $\cA^m_{1-\mu}$.  According to Lemma \ref{lem-decomp-cube-ball}, there exist  $z_1\in Q\subset Q_1\subset B_R(z_o)$ and $r_k\in(0,R/2)$  satisfying \eqref{eq-decomp-m-1}, \eqref{eq-decomp-m-2}, $K_{\eta r_k}(z_1,a)\subset K \subset \overline {K_{2r_{k}}(z_1,a)}$ and $$(x_1,s_1)\in \overline K^m=\widetilde{Q}\times\big(a, a+m\cdot l(k-1)\big]\subset  \overline{B}_{2r_k}(z_1)\times(a,a+m\cdot 4r_k^2]. $$
We note that $$\displaystyle \inf_{B_{2r_k}(z_1)\times(a,a+m\cdot 4r_k^2]}u\leq u(x_1,s_1)\leq \frac{hM_1^i}{M_0^m}$$ and  $$B_{\alpha_1r_k}(z_1)\times  \left(a-(\eta^2+\eta^4/4)r_k^2, a+m\cdot 4r_k^2\right]\subset B_{\alpha_1R}(z_o)\times\left(-3R^2, (1+m)R^2\right],$$  
  since  $ r_k< R/2$ and $ a\in(t_1-l(k_R),t_1]\subset(-R^2,R^2)$. 
We also have that  for  $j=1,\cdots,m$,  
 \begin{equation}\label{eq-decay-est-f}
 r_k^2\,\left(\fint_{K_{\alpha_1r_k, \alpha_2r_k^2}(z_1,a+(m-j+1)\cdot 4r_k^2 )}|f^+|^{n+1}\right)^{\frac{1}{n+1}}  \leq \ve_0\frac{h M_1^i}{M_0^{m-j+1}}.
\end{equation}
Indeed, the volume comparison theorem and the property \eqref{eq-decomp-m-2} will give that 
\begin{align*}
& r_k^2\,\left(\fint_{K_{\alpha_1r_k, \alpha_2r_k^2}(z_1,a+(m-j+1)\cdot 4r_k^2 )}|f^+|^{n+1}\right)^{\frac{1}{n+1}}  
=\frac{r_k^{\frac{n}{n+1}}\cdot r_k^{\frac{n+2}{n+1}} }{\left|B_{\alpha_1r_k}(z_1)\right|^{\frac{1}{n+1}} \left(\alpha_2r_k^2\right)^{\frac{1}{n+1}}}||f^+||_{L^{n+1} }\\
&\leq \frac{R^{\frac{n}{n+1}}\cdot R^{\frac{n+2}{n+1}}}{\left|B_{\alpha_1R}(z_1)\right|^{\frac{1}{n+1}} \left(\alpha_2R^2\right)^{\frac{1}{n+1}}}||f^+||_{L^{n+1}\left(B_{\frac{12}{\eta}R}(z_o)\times(T_1,T_2]\right)}\\
&\leq \frac{R^2}{\left|B_{\beta_1R}(z_o)\right|^{\frac{1}{n+1}} \left(\alpha_2R^2\right)^{\frac{1}{n+1}}}||f^+||_{L^{n+1}\left(B_{\frac{49}{\eta^3} }(z_o)\times(T_1,T_2]\right)}\\
&\leq \frac{C_1R^2/2}{\left|B_{\frac{49}{\eta^3} }(z_o)\times(T_1,T_2]\right|^{\frac{1}{n+1}}}  ||f^+||_{L^{n+1}\left(B_{\frac{49}{\eta^3} }(z_o)\times(T_1,T_2]\right)}< C_1\ve_1, 
\end{align*}
where a uniform constant $C_1>1$ depends only on $\eta, n$ and $m$. For $h\geq C_1M_0^m$ and $M_1>1$,  we have that
$$ r_k^2\,\left(\fint_{K_{\alpha_1r_k, \alpha_2r_k^2}(z_1,\,a+(m-j+1)\cdot 4r_k^2 )}|f^+|^{n+1}\right)^{\frac{1}{n+1}} < C_1\ve_1< \ve_0\frac{h M_1^i}{M_0^m} \leq \ve_0\frac{h M_1^i}{M_0^{m-j+1}}, $$ 
 which proves \eqref{eq-decay-est-f}. 
Thus,  we can  apply Lemma \ref{lem-decay-est-1-step} iteratively to $\tilde u_j:=\frac{M_0^{m-j+1}}{hM_1^i}u,$  for $ 1\leq j\leq m$,   to deduce 
$$\mu\leq \frac{\left|\left\{u \leq hM_1^i\right\}\cap K_{\eta r_k}(z_1,a)\right|}{|K_{\alpha_1r_k,\,\alpha_2r_k^2}(z_1,a+4r_k^2)|}<\frac{\left|\left\{u\leq h M_1^i\right\}\cap K\right|}{|K|} . $$ However, this contradicts to the fact that $|\cA\cap K|>(1-\mu)|K|$. 
Therefore, we have proved that $\cA_{1-\mu}^m\subset \cB_{\frac{hM_1^i}{M_0^m}}$ for   $h\geq C_1M_0^m$.

(iv) Since  $ |\cB_{M_1^i}\cap K_1 |<\left( 1-\frac{\mu}{2}\right)^{i}|K_1|$, 
 we have that $ \displaystyle {\big|\cB_{\frac{hM_1^i}{M_0^m}}\cap K_1\big|} \leq  |\cB_{M_1^i}\cap K_1 |<\left( 1-\frac{\mu}{2}\right)^{i}|K_1| \leq\frac{1}{1-\frac{\mu}{2}}{|\cA|}$   for   $h\geq C_1M_0^m$.
 Then, by using \eqref{eq-decay-est-elongation}, we obtain
\begin{align*}
|\cA_{1-\mu}^m\bs K_1|&=|\cA_{1-\mu}^m|-|\cA_{1-\mu}^m\cap K_1|\\
&\geq \frac{m}{(m+1)(1-\mu)}|\cA|-{\left|\cB_{\frac{hM_1^i}{M_0^m}}\cap K_1\right|}\\
&>\left(\frac{m}{(m+1)(1-\mu)}-\frac{1}{1-\frac{\mu}{2}}\right)|\cA|=:\alpha|\cA|\geq \alpha\left(1-\frac{\mu}{2}\right)^{i+1}|K_1|
\end{align*}
with $\alpha:=\frac{m}{(m+1)(1-\mu)}-\frac{1}{1-\frac{\mu}{2}}>0$.
 We find a point $(x_1,s_1)\in \cA^m_{1-\mu}\bs K_1$  and a parabolic dyadic cube $K:=Q\times(a-l(k),a]\subset K_1$ of generation $k(>k_R)$ such that $(x_1,s_1)\in \overline{ K}^m, $ and $|\cA\cap K|>(1-\mu)|K|$. We  may assume that $$s_1>t_1+\frac{\alpha}{2}\left(1-\frac{\mu}{2}\right)^{i+1}l(k_R)$$ since   $\cA_{1-\mu}^m\subset Q_1\times(t_1-l(k_R),+\infty)$ and    $\frac{|\cA_{1-\mu}^m\bs K_1|}{|Q_1|}> \alpha\left(1-\frac{\mu}{2}\right)^{i+1}l(k_R)$.  
Using Lemma \ref{lem-decomp-cube-ball} again, there exist $z_1\in Q\subset Q_1\subset B_R(z_o)$ and $r_k
\in (0,R/2)$  satisfying \eqref{eq-decomp-m-1},\eqref{eq-decomp-m-2}, and  $K_{\eta r_k}(z_1,a)\subset K \subset \overline {K_{2r_{k}}(z_1,a)}$.
  Then   we have 
$$s_1 \leq  a +m\cdot l(k-1) <t_1+m\cdot4r_k^2$$ and hence 
$$r_k\geq \frac{\sqrt\alpha  }{\sqrt{8m}}\left(1-\frac{\mu}{2}\right)^{\frac{i+1}{2}}\sqrt{l(k_R)}\geq \frac{\sqrt\alpha \delta_0^2}{4\sqrt{2m}}\left(1-\frac{\mu}{2}\right)^{\frac{i+1}{2}}R\geq \left(\frac{\eta}{2}\right)^{Ni}R $$ 
for a uniform integer $N>0$ independent of $i\in\N$.  We apply  Proposition \ref{cor-decay-2} to 
$u$   in order to get
  \begin{align*}
\mu &\leq \frac{\left|\left\{u\leq M_0^{Ni+2} \right\}\cap K_{\eta r_k}(z_1,a)\right|}{\left|K_{\alpha_1r_k,\,\alpha_2r_k^2}(z_1,a+4r_k^2) \right|}\leq  \frac{\left|\left\{u\leq M_0^{(N+2)i} \right\}\cap K \right|}{\left|K \right|},
\end{align*}
since 
 $r_k\geq \left(\frac{\eta}{2}\right)^{Ni}R$,  and $ (z_1,a)\in  K\subset K_1  \subset  B_R(z_o)\times(-R^2,R^2)$.  
If $h\geq M_1:=\max\{C_1M_0^m, M_0^{N+2}\}$,   this implies
$$1-\mu>  \frac{\left|\left\{u>M_0^{(N+2)i} \right\}\cap K \right|}{\left|K \right|} \geq  \frac{\left|\left\{u>hM_1^i \right\}\cap K \right|}{\left|K \right|}=\frac{ |\cA\cap K|}{\left|K \right|}, $$
which is 
 a contradiction to the fact  that $|\cA\cap K|>(1-\mu)|K| $. Thus, we have  $h< M_1$ for a uniform constant $M_1:=\max\{ C_1 M_0^m, M_0^{N+2}\}.$ Therefore, we conclude that $\frac{\left|\{u>M_1^{i+1}\}\cap K_1\right|}{|K_1|}<\left(1-\frac{\mu}{2}\right)^{i+1}$,  completing the proof.   
\end{proof}
The following corollary is a direct consequence of Lemma \ref{lem-decay-est}, which estimates  the distribution function of $u$.
\begin{cor}\label{cor-decay-est}
Under  the same  assumption as  Lemma \ref{lem-decay-est}, we have
\begin{equation}\label{eq-decay-est-c}
\frac{\left|\{u \geq h\}\cap K_1\right|}{|K_1|}\leq d h^{-\e}\quad\forall h>0,
\end{equation}
where $d>0$ and $0<\e<1$  depend only on $n, \lambda, \Lambda$,  and  $a_L$.
\end{cor}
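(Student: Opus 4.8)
The plan is to deduce the power bound \eqref{eq-decay-est-c} from the geometric decay in Lemma~\ref{lem-decay-est} by the standard interpolation argument between the multiplicative scale $\{M_1^i\}_{i\ge1}$ and an arbitrary level $h>0$. The only inputs are Lemma~\ref{lem-decay-est} itself and elementary bookkeeping with logarithms.

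First I would treat the range $0<h\le M_1$, where the statement is trivial: since $|\{u\ge h\}\cap K_1|/|K_1|\le1$, it is enough to require $d\ge M_1^{\e}$, for then $dh^{-\e}\ge dM_1^{-\e}\ge1$ on this range. Next, for $h>M_1$ I would pick the unique integer $i\ge1$ with $M_1^{i}<h\le M_1^{i+1}$. Because $\{u\ge h\}\subset\{u>M_1^{i}\}$, Lemma~\ref{lem-decay-est} yields
\[
\frac{|\{u\ge h\}\cap K_1|}{|K_1|}\ \le\ \frac{|\{u>M_1^{i}\}\cap K_1|}{|K_1|}\ <\ \Bigl(1-\tfrac{\mu}{2}\Bigr)^{i}.
\]

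The key step is then to rewrite $(1-\mu/2)^{i}$ as a negative power of $h$. From $h\le M_1^{i+1}$ one gets $i\ge \log h/\log M_1-1$, and since $0<1-\mu/2<1$ increasing the exponent decreases the quantity, so
\[
\Bigl(1-\tfrac{\mu}{2}\Bigr)^{i}\ \le\ \Bigl(1-\tfrac{\mu}{2}\Bigr)^{-1}\,h^{-\e},\qquad \e:=\frac{\log\bigl(1/(1-\mu/2)\bigr)}{\log M_1}>0,
\]
where $\e>0$ uses $M_1>1$. Should this $\e$ turn out to be $\ge1$, I would simply replace it by any smaller positive number: for $h\ge1$ a smaller exponent only enlarges the right-hand side, and for $h<1$ the bound is already subsumed in the first range, so one may assume $0<\e<1$. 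Combining the two ranges, \eqref{eq-decay-est-c} holds with $d:=\max\{(1-\mu/2)^{-1},\,M_1^{\e}\}$, and since $\mu$ and $M_1$ are uniform (depending only on $n,\lambda,\Lambda,a_L$), so are $d$ and $\e$.

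I do not expect a genuine obstacle in this corollary: it is the routine passage from geometric decay along a fixed multiplicative scale to a power-law distribution estimate, and the only points needing a moment's care are the logarithm bookkeeping in the key step and the harmless renormalization to make $\e<1$.
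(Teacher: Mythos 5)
Your argument is correct and is exactly the standard interpolation between the geometric decay at the levels $M_1^i$ from Lemma \ref{lem-decay-est} and an arbitrary level $h$; the paper itself omits the proof, calling the corollary a direct consequence of that lemma. The bookkeeping (choice of $\e=\log\bigl(1/(1-\mu/2)\bigr)/\log M_1$, the constant $d=\max\{(1-\mu/2)^{-1},M_1^{\e}\}$, and the harmless shrinking of $\e$ to ensure $\e<1$) is all sound and uniform in the required parameters.
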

Another consequence of Lemma \ref{lem-decay-est} is a weak Harnack inequality for nonnegative supersolutions to $\sL u=f$.
\begin{cor}\label{cor-decay-est-2}
Under  the same  assumption as  Lemma \ref{lem-decay-est}, we have for $p_o:=\frac{\e}{2}$, 
\begin{equation}\label{eq-decay-est-c-2}
\left(\frac{1}{|K_{\kappa R}(z_o,0)|} \int_{K_{\kappa R}(z_o,0) }u^{p_o}\right)^{\frac{1}{p_o}} \leq C,
\end{equation}
where $C>0$ depends only on $n, \lambda, \Lambda$,  and  $a_L$.
 \end{cor}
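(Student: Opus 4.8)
The plan is to deduce the $L^{p_o}$ bound from the power decay of the distribution function in Corollary~\ref{cor-decay-est} via the layer-cake formula, a covering of $K_{\kappa R}(z_o,0)$ by boundedly many cubes of the fixed parabolic dyadic decomposition, and Bishop's volume comparison (Lemma~\ref{lem-jac-bishop}). The reason for taking the exponent to be $p_o=\e/2$ rather than $\e$ itself is that the tail decay $h^{-\e}$ furnished by Corollary~\ref{cor-decay-est} is only barely integrable against $h^{p_o-1}\,dh$ near $h=\infty$, which forces $p_o<\e$.

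First I would cover $K_{\kappa R}(z_o,0)$ by parabolic dyadic cubes of generation $k_R$. Since $\kappa=\tfrac{\eta}{2}\sqrt{1-\delta_0^2}<\tfrac{\eta}{2}\le\tfrac{\delta_1}{2}<\delta_1$ and, by the remark after Lemma~\ref{lem-decomp-cube-ball}, $\eta^2r_{k_R}^2\le l(k_R)<4r_{k_R+1}^2$ with $r_{k_R}\in[\tfrac{\delta_0}{2}R,\tfrac12R)$, a short computation (using $\eta\le\delta_0$ and $l(k_R)<\delta_0^2R^2$) shows that $K_{\kappa R}(z_o,0)$ is covered, up to a null set, by a finite family $\{K^{(1)},\dots,K^{(N)}\}$ of parabolic dyadic cubes of generation $k_R$, each of the form $K^{(j)}=Q^{(j)}\times(t^{(j)}-l(k_R),t^{(j)}]$ with $d(z_o,Q^{(j)})\le\delta_1R$ and $K^{(j)}\subset Q^{(j)}\times(-R^2,R^2)$; since each $Q^{(j)}$ contains a ball of radius comparable to $R$ and lies in $B_{2R}(z_o)$, Bishop's comparison bounds $N$ by a constant depending only on $n$. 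Thus each $K^{(j)}$ is an admissible choice of ``$K_1$'' in Lemma~\ref{lem-decay-est}, so Corollary~\ref{cor-decay-est} gives $|\{u>h\}\cap K^{(j)}|\le|\{u\ge h\}\cap K^{(j)}|\le d\,h^{-\e}\,|K^{(j)}|$ for every $h>0$. Combining this with the trivial bound $|\{u>h\}\cap K^{(j)}|\le|K^{(j)}|$ and the layer-cake identity,
\begin{align*}
\int_{K^{(j)}}u^{p_o}
&=p_o\int_0^\infty h^{p_o-1}\bigl|\{u>h\}\cap K^{(j)}\bigr|\,dh\\
&\le p_o\,|K^{(j)}|\int_0^\infty h^{p_o-1}\min\{1,\,d\,h^{-\e}\}\,dh;
\end{align*}
the last integral, evaluated by splitting at $h=d^{1/\e}$, equals $d^{p_o/\e}\bigl(\tfrac1{p_o}+\tfrac1{\e-p_o}\bigr)$, finite precisely because $0<p_o<\e$, so that $\int_{K^{(j)}}u^{p_o}\le 2\sqrt{d}\,|K^{(j)}|$ when $p_o=\e/2$.

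Summing over $j$ and using $l(k_R)<\delta_0^2R^2$, $|Q^{(j)}|\le|B_{2R}(z_o)|\le 2^n|B_R(z_o)|$ and $N\le N(n)$, I obtain $\int_{K_{\kappa R}(z_o,0)}u^{p_o}\le C\,|B_R(z_o)|\,R^2$; on the other hand $|K_{\kappa R}(z_o,0)|=|B_{\kappa R}(z_o)|\,\kappa^2R^2\ge\kappa^{n+2}|B_R(z_o)|\,R^2$ by Bishop's comparison, whence $\fint_{K_{\kappa R}(z_o,0)}u^{p_o}\le C$ with $C=C(n,\lambda,\Lambda,a_L)$, and raising to the power $1/p_o$ proves \eqref{eq-decay-est-c-2}. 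The only genuinely delicate point is the integrability at $h\to\infty$: the rate $h^{-\e}$ of Corollary~\ref{cor-decay-est} sits on the boundary of summability, and the argument closes exactly because $p_o$ is taken strictly below $\e$; the covering step and the volume comparisons are routine consequences of Bishop's theorem.
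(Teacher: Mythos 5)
Your proposal is correct and follows essentially the same route as the paper: cover $K_{\kappa R}(z_o,0)$ by the boundedly many generation-$k_R$ parabolic dyadic cubes meeting it (each admissible as $K_1$ in Lemma \ref{lem-decay-est} since $\kappa<\delta_1$ and $l(k_R)<\delta_0^2R^2$), apply Corollary \ref{cor-decay-est} on each cube via the layer-cake formula, and close with Bishop's volume comparison. The only cosmetic differences are that you split the layer-cake integral at $h=d^{1/\e}$ rather than at $h=1$ and sum over all cubes instead of bounding by the worst one; neither changes the argument.
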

\begin{proof} Let $k=k_R$ and let $\left\{K^{k,\alpha}:=Q^{k,\alpha}\times(t^{k,\alpha}-l(k), t^{k,\alpha}]\right\}_{\alpha\in J'_k}$ be a family of parabolic dyadic cubes intersecting $K_{\kappa R}(z_o,0)$. For $\alpha\in J_k'$,  we have that $K^{k,\al}\subset B_{(\kappa+\delta_0)R}(z_o)\times(-R^2,R^2]$ since $d(z_o, Q^{k,\alpha})\leq \kappa R (<\delta_1 R)$,    $diam(Q^{k,\alpha})\leq c_2\delta_0^k\leq \delta_0R$,  and $-R^2+l(k)<-\kappa^2R^2 \leq t^{k,\alpha}\leq l(k)< \delta_0^2R^2$.  Since 
\begin{align*}
|K_{\kappa R}(z_o,0)| 
&\geq \left(\frac{\kappa}{\kappa+\delta_0}\right)^n |B_{(\kappa+\delta_0)R}(z_o)|\cdot \kappa^2R^2\geq \left(\frac{\kappa}{\kappa+\delta_0}\right)^n \sum_{\alpha\in J'_k}|Q^{k,\alpha}|\cdot \kappa^2R^2\\ 
&\geq \left(\frac{\kappa}{\kappa+\delta_0}\right)^n \sum_{\alpha\in J'_k}|B_{c_1\delta_0^k}(z^{k,\alpha})|\cdot \kappa^2R^2\geq \left(\frac{\kappa}{\kappa+\delta_0}\right)^n \sum_{\alpha\in J'_k}|B_{\frac{\delta\delta_0}{2}R}(z^{k,\alpha})|\cdot \kappa^2R^2\\
&\geq \left(\frac{\kappa}{\kappa+\delta_0}\cdot\frac{\delta\delta_0}{2 (\delta_0+2\kappa)}\right)^n \sum_{\alpha\in J'_k}|B_{(\delta_0+2\kappa)R}(z^{k,\alpha})|\cdot \kappa^2R^2\\
&\geq \left(\frac{\kappa}{\kappa+\delta_0}\cdot\frac{\delta\delta_0}{2  (\delta_0+2\kappa)}\right)^n \sum_{\alpha\in J'_k}|B_{\kappa R}(z_o)|\cdot \kappa^2R^2,
\end{align*} the number $|J_k'|$ of parabolic dyadic cubes intersecting $K_{\kappa R}(z_o,0)$ is uniformly bounded. Thus for some $K^{k,\alpha}$ with $\alpha\in J_k'$,  we have 
\begin{align*}
\int_{K_{\kappa R}(z_o)} u^{p_o} &\leq |J'_k|\cdot\int_{K^{k,\alpha}}u^{p_o}\\&\leq |J'_k|\cdot\left\{|K^{k,\al}|+ p_o\int_1^{\infty} h^{p_o-1}|\{u\geq h\}\cap {K^{k,\alpha}}|dh \right\}\\
&\leq |J'_k|\cdot\left\{|K^{k,\al}|+ p_od|{K^{k,\alpha}}|\int_1^{\infty} h^{p_o-1 -\e}dh\right\}.  
\end{align*}from Corollary \ref{cor-decay-est}, where $d$ and $\e$ are the constants in Corollary \ref{cor-decay-est}.

By using the volume comparison theorem, 
we conclude that 
\begin{align*}
\frac{1}{|K_{\kappa R}(z_o,0)|} \int_{K_{\kappa R}(z_o,0) }u^{p_o}&\leq  C_0\frac{|{K^{k,\alpha}}|}{|K_{\kappa R}(z_o,0)|}\leq  C_0\left(\frac{\kappa+\delta_0}{\kappa}\right)^n\cdot \frac{\delta_0^2}{\kappa^2}
\end{align*}
for $C_0:=|J'_k|  \cdot \left\{1+ p_od\int_1^{\infty} h^{-1-\e/2}dh\right\}$  since $K^{k,\alpha}\subset B_{(\kappa+\delta_0)R}(z_o)\times(t^{k,\alpha}-\delta_0^2R^2,t^{k,\alpha}]$.
\end{proof}

\subsection{Proof of Harnack Inequality}
So far, we have dealt with  nonnegative supersolutions. Now, we consider a nonnegative solution $u$ of $\sL u = f$. We apply Corollary  \ref{cor-decay-est} as in \cite{Ca} (see also \cite{W}) to solutions $C_1 -C_2u$ for some constants $C_1$ and $C_2.$ 
\begin{lemma}\label{lem-sup-est-1} Suppose that $M$ satisfies the conditions \eqref{cond-M-1},\eqref{cond-M-2}. Let $z_o\in M,  R>0$ and  $\tau \in[3,16]. $  
Let $u$ be a nonnegative smooth function such that $\sL u= f $   in $
 B_{\frac{50}{\eta^3} R}(z_o)\times\left(-3R^2,\frac{\tau R^2}{\eta^2}\right]$.   Assume that  
$\displaystyle\inf_{B_{R}(z_o)\times\left[\frac{2R^2}{\eta^2}, \frac{\tau R^2}{\eta^2} \right]}u\leq 1$ and $$  R^2 \left(\fint_{ B_{\frac{50}{\eta^3} R}(z_o)\times\left(-3R^2,\frac{\tau R^2}{\eta^2}\right]} |f|^{n+1}\right)^{\frac{1}{n+1}} \leq \frac{\ve_1}{4}=:\ve$$
for a uniform constant 
 $0<\ve_1<1 $ as in Lemma \ref{lem-decay-est}.  

Then there exist constants $\sigma>0$ and $\tilde M_0>1$ depending on $n,\lambda,\Lambda$ and $ a_L$  such that for $\nu:=\frac{\tilde M_0}{\tilde M_0-1/2}>1$,    the following holds:

If $j\geq 1$ is an integer and $z_1\in M$ and $t_1\in\R$ satisfy $$d(z_o,z_1)\leq  {\kappa R},\quad |t_1|\leq \kappa^2 R^2$$ and 
\begin{equation*}u(z_1,t_1)\geq \nu^{j-1} \tilde M_0,\end{equation*} then  
\begin{enumerate}[(i)]
\item 
$K_{\frac{50}{\eta^3} L_j,\,\left(3+\frac{\tau}{\eta^2}\right)L_j^2  }(z_1,t_1)\subset  B_{\frac{50}{\eta^3} R}(z_o)\times\left(-3R^2,\frac{\tau R^2}{\eta^2}\right]$, 
\item $\displaystyle\sup_{K_{\frac{50}{\eta^3} L_j,\,\left(3+\frac{\tau}{\eta^2}\right)L_j^2  }(z_1,t_1)}u \geq \nu^j \tilde M_0$, 
\end{enumerate}
where 
 $\,L_j:=\displaystyle\sigma {\tilde M_0}^{-\frac{\e}{n+2}}\nu^{-\frac{j\e}{n+2}}R $ and $0<\e<1$  as   in Corollary \ref{cor-decay-est}. 
\end{lemma}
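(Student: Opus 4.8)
Part (i) is routine: since $\nu>1$ and $j\ge 1$ we have $L_j\le\sigma\tilde M_0^{-\e/(n+2)}R$, so once $\sigma$ has been fixed we may take $\tilde M_0$ so large that $L_j$ is an arbitrarily small multiple of $R$, and the containment then follows from $d(z_o,z_1)\le\kappa R$ and $|t_1|\le\kappa^2R^2$. For (ii) I argue by contradiction: suppose $\sup_K u<\nu^j\tilde M_0$, where $K:=K_{\frac{50}{\eta^3}L_j,\,(3+\tau/\eta^2)L_j^2}(z_1,t_1)$. Consider the rescaled obstacle function
$$\hat v:=\frac{2}{\nu^j}\bigl(\nu^j\tilde M_0-u\bigr).$$
Because $\nu=\tilde M_0/(\tilde M_0-\tfrac12)$ is exactly the value for which $\nu^j\tilde M_0-\nu^{j-1}\tilde M_0=\tfrac12\nu^j$, the hypothesis $u(z_1,t_1)\ge\nu^{j-1}\tilde M_0$ gives $\hat v(z_1,t_1)\le 1$; the contradiction hypothesis gives $\hat v\ge 0$ on $K$; and $\sL\hat v=-\tfrac{2}{\nu^j}f$. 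Translating in time so that $(z_1,t_1)$ sits in the infimum box of Lemma \ref{lem-decay-est-1-step} at center $z_1$ and radius $L_j$, one has $K\supset K_{\alpha_1L_j,\,\alpha_2L_j^2}(z_1,\cdot)$ since $\alpha_1=\tfrac{11}{\eta}<\tfrac{50}{\eta^3}$ and $\alpha_2<3+\tfrac{\tau}{\eta^2}$; and using $L_j\le R$ with Bishop's volume comparison (Lemma \ref{lem-jac-bishop}) one checks that the integral bound on $f$ passes from scale $R$ to scale $L_j$, with a gain $(L_j/R)^{n/(n+1)}\le 1$ that more than absorbs the factor $2/\nu^j$. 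Hence $\hat v$ satisfies all the hypotheses of Lemma \ref{lem-decay-est-1-step} at center $z_1$, radius $L_j$.

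Applying Lemma \ref{lem-decay-est-1-step} to $\hat v$ yields
$$\bigl|\{\hat v\le M_\eta\}\cap K_{\eta L_j}(z_1,\cdot)\bigr|\ge\mu_\eta\,\bigl|K_{\alpha_1L_j,\,\alpha_2L_j^2}(z_1,\cdot)\bigr|\ge\mu'\,\bigl|K_{\eta L_j}(z_1,\cdot)\bigr|,$$
with $\mu'>0$ uniform (the last step is Bishop's comparison); since $\{\hat v\le M_\eta\}=\{u\ge\nu^j(\tilde M_0-M_\eta/2)\}$ once $\tilde M_0>M_\eta/2$, this gives $|\{u\ge\nu^j(\tilde M_0-M_\eta/2)\}\cap K_{\eta L_j}(z_1,\cdot)|\ge\mu'\,|K_{\eta L_j}(z_1,\cdot)|$. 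On the other hand, the hypotheses of the present lemma are exactly those of Lemma \ref{lem-decay-est} at center $z_o$, radius $R$, so Corollary \ref{cor-decay-est} holds for $u$ on every admissible generation-$k_R$ parabolic dyadic cube. Since $d(z_o,z_1)\le\kappa R<\delta_1R$ and $L_j\ll R$, the small cylinder $K_{\eta L_j}(z_1,\cdot)$ sits near $(z_o,0)$ and meets only a dimensional number of such cubes, all of volume comparable to a fixed one $K_\sharp$; hence $|\{u\ge H\}\cap K_{\eta L_j}(z_1,\cdot)|\le C(n)\,d\,H^{-\e}|K_\sharp|$ for all $H>0$. Taking $H=\nu^j(\tilde M_0-M_\eta/2)$ and combining with the previous display gives $\mu'|K_{\eta L_j}(z_1,\cdot)|\le C(n)\,d\,\nu^{-j\e}(\tilde M_0-M_\eta/2)^{-\e}|K_\sharp|$. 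By Bishop's comparison and the structure of the dyadic cubes (Lemmas \ref{lem-jac-bishop} and \ref{lem-decomp-cube-ball}), $|K_{\eta L_j}(z_1,\cdot)|/|K_\sharp|\ge c_*(L_j/R)^{n+2}$ with $c_*>0$ depending only on $n$; substituting $(L_j/R)^{n+2}=\sigma^{n+2}\tilde M_0^{-\e}\nu^{-j\e}$, the factor $\nu^{-j\e}$ cancels on both sides and we are left with
$$c_*\,\sigma^{n+2}\Bigl(1-\frac{M_\eta}{2\tilde M_0}\Bigr)^{\!\e}\le\frac{C(n)\,d}{\mu'}.$$

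It remains to fix the constants. First choose $\sigma$ (a uniform constant) so large that $c_*\sigma^{n+2}>2C(n)d/\mu'$; then choose $\tilde M_0$ (a uniform constant) large enough that $(1-M_\eta/(2\tilde M_0))^\e>\tfrac12$, that $\tilde M_0>M_\eta/2$, and that $\sigma\tilde M_0^{-\e/(n+2)}$ is small enough for part (i) and for all the rescalings above — consistent, since $\sigma$ is chosen first. With these choices the displayed inequality is violated, a contradiction; hence $\sup_K u\ge\nu^j\tilde M_0$. The constant $\nu=\tilde M_0/(\tilde M_0-\tfrac12)>1$, the radius $L_j$ as in the statement, and $0<\e<1$ the exponent of Corollary \ref{cor-decay-est}.

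The one genuinely delicate point is making the two exponents match in the final balance. One must use the $\e$-power decay of Corollary \ref{cor-decay-est} together with the one-step estimate Lemma \ref{lem-decay-est-1-step}, rather than the $L^{p_o}$ weak Harnack of Corollary \ref{cor-decay-est-2}: only this combination produces the factor $\nu^{-j\e}$ that exactly cancels the $(L_j/R)^{n+2}$ coming from volume comparison, whereas with $p_o=\e/2$ one is left with an uncancelled $\nu^{-j\e/2}\to 0$ and no contradiction. Consequently $\sigma$ must be taken as a fixed, possibly large constant, and all the smallness needed for part (i) and for the rescalings has to be bought from the size of $\tilde M_0$; the remainder of the argument — fitting the rescaled cylinders inside $K$ and inside the ambient cylinder, transferring the bound on $f$, and the volume comparisons via Bishop's theorem — is routine but somewhat lengthy bookkeeping.
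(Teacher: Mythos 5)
Your proof is correct, and its overall architecture is the same as the paper's: argue by contradiction, build a nonnegative auxiliary function from $\nu^j\tilde M_0-u$ normalized so that its value at $(z_1,t_1)$ is at most $1$, derive a lower bound on the measure of a superlevel set of $u$ at height $\approx\nu^j\tilde M_0$ inside a cylinder of size $L_j$, play it against the power decay of Corollary \ref{cor-decay-est} at scale $R$, and let the exponent identity $(L_j/R)^{n+2}=\sigma^{n+2}\tilde M_0^{-\e}\nu^{-j\e}$ cancel the $\nu^{-j\e}$ so that a large choice of $\sigma$ forces a contradiction. The one step where you genuinely deviate is the small-scale lower bound: the paper applies the full power-decay Corollary \ref{cor-decay-est} to the rescaled complement $w=\frac{\nu\tilde M_0-u/\nu^{j-1}}{(\nu-1)\tilde M_0}$ on a single parabolic dyadic cube $K_{L_j}$ of generation $k_{L_j}$ (so that $\{u\le\nu^j\tilde M_0/2\}$ occupies at most a $d\tilde M_0^{-\e}$ fraction of $K_{L_j}$, whence the superlevel set fills almost all of it), and then compares $|K_{L_j}|$ with its generation-$k_R$ predecessor $K_1$; you instead apply only the one-step Lemma \ref{lem-decay-est-1-step} to $\hat v=\frac{2}{\nu^j}(\nu^j\tilde M_0-u)$ on a metric cylinder, obtaining a fixed fraction $\mu_\eta$ rather than a fraction close to $1$, and cover $K_{\eta L_j}(z_1,\cdot)$ by boundedly many generation-$k_R$ cubes for the upper bound. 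Your route is slightly more economical (no need to run the full Calder\'on--Zygmund iteration at the small scale) at the cost of a marginally lower threshold $\nu^j(\tilde M_0-M_\eta/2)$ in place of $\nu^j\tilde M_0/2$, which is harmless once $\tilde M_0\ge M_\eta$. Your closing remarks are also accurate on the two points that actually matter: the $\e$-power decay (not the $L^{p_o}$ bound of Corollary \ref{cor-decay-est-2}) is what makes the $j$-dependence cancel, and all residual smallness (part (i), the transfer of the $f$-bound to scale $L_j$ with its Bishop constants and the factor $2/\nu^j$) must be purchased from $\tilde M_0$ after $\sigma$ is fixed, exactly as in the paper's choice $\sigma\tilde M_0^{-\e/(n+2)}+d\tilde M_0^{-\e}\le\kappa/4$.
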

\begin{proof}
We select $\sigma>0$ and $\tilde M_0> 1$  large so that
$$\sigma> \frac{c_2}{c_1\delta_0}\left(2d2^{\e} \right)^{\frac{1}{n+2}}$$
and $$\sigma {\tilde M_0}^{-\frac{\e}{n+2}}+d\tilde M_0^{-\e}\leq \frac{\kappa}{4}, $$
where $d, \e, c_1,c_2$ and $\delta_0$ are the constants in Corollary \ref{cor-decay-est} and Theorem \ref{thm-christ}. 
Since $L_j\leq \frac{\kappa R}{4}<\frac{\eta R}{8},\, d(z_o,z_1)\leq\kappa R<R$ and $|t_1|\leq \kappa^2R^2< \frac{\eta^2 R^2}{4}$,  we have  
\begin{align*}
&B_{\frac{50}{\eta^3} L_j}(z_1)\times\left( t_1-\left(3+\frac{\tau}{\eta^2}\right)L_j^2 , t_1\right]\subset  B_{\frac{50}{\eta^3} R}(z_o)\times\left(-3R^2,\frac{\tau R^2}{\eta^2}\right],\end{align*} so   (i) is true.   

Now, suppose on the contrary  that $$\displaystyle\sup_{K_{\frac{50}{\eta^3}L_j,\,\left(3+\frac{\tau}{\eta^2}\right)L_j^2  }(z_1,t_1)}u < \nu^j \tilde M_0. $$
Let $k_j:=k_{L_j}\geq k_R$ with $L_j$ in Definition \ref{def-d-cube-m}. From Lemma \ref{lem-decomp-cube-ball}, there exists a dyadic  cube $Q_{L_j}$  of generation $k_j$ such that $d(z_1,Q_{L_j})\leq \delta_1 L_j$. We also find a parabolic dyadic cube $K_{L_j}$ of generation $k_j$ such that 
$$K_{L_j}\subset Q_{L_j}\times\left(t_1-\frac{\tau L_j^2}{\eta^2}-L_j^2, t_1-\frac{\tau L_j^2}{\eta^2}+L_j^2\right)$$ since $l(k_j)<\delta_0^2L_j^2$.  
Let $K_1$ be the unique predecessor of $K_{L_j}$ of generation $k_R$,  that is, $$K_{L_j}\subset K_1:= Q_1\times (a-l(k_R),a].$$ 
Then we have \begin{align*}d(z_o,Q_1)\leq d(z_o,Q_{L_j})&\leq d(z_o,z_1)+d(z_1,Q_{L_j})\leq \kappa R+ \delta_1L_j < 
\delta_1R\\&\mbox{and 
$(a-l(k_R),a]\subset (-R^2, R^2)$ }\end{align*}  since 
\begin{align*}&l(k_R)+|t_1|+\frac{\tau L_j^2}{\eta^2}+L_j^2\leq l(k_R)+|t_1|+\frac{16L_j^2}{\eta^2}+L_j^2\\&\leq \delta_0^2R^2+\kappa^2R^2+\left(\frac{16}{\eta^2}+1\right)\frac{\kappa^2}{16}R^2=
\left\{\delta_0^2+\left(\frac{16}{\eta^2}+17\right)\frac{\eta^4(1-\delta_0^2)}{64}\right\}R^2
< R^2. \end{align*}
Now, we apply Corollary \ref{cor-decay-est}  to $u$ with $K_1$  to obtain
\begin{equation} \label{eq-lem-sup-1}
 \left|\left\{u\geq \nu^{j}\frac{\tilde M_0}{2} \right\}\cap K_{L_j}\right| \leq  \left|\left\{u\geq \nu^{j}\frac{\tilde M_0}{2} \right\}\cap K_1\right| \leq d \left(\nu^{j}\frac{\tilde M_0}{2}\right)^{-\e} |K_1| .
\end{equation}

On the other hand, we consider the function $$w:=\frac{\nu \tilde M_0-u/\nu^{j-1}}{(\nu-1)\tilde M_0}, $$ which  is nonnegative and   satisfies $$\sL w=-\frac{f}{\nu^{j-1}(\nu-1)\tilde M_0} \,\,\,\,\mbox{in}\,\,\,\,K_{\frac{50}{\eta^3}L_j,\,\left(3+\frac{\tau }{\eta^2}\right)L_j^2  }(z_1,t_1)$$  from the assumption. We also have $w(z_1, t_1)\leq 1$ and 
$$\frac{|f|}{\nu^{j-1}(\nu-1)\tilde M_0}\leq\frac{|f|}{ (\nu-1)\tilde M_0}=\frac{2(\tilde M_0-1/2)|f|}{\tilde M_0}\leq 2|f|. $$ 
  By using the volume comparison theorem  with $L_j\leq \frac{\kappa}{4}R<\frac{\eta R}{8}$ and $B_{\frac{11}{\eta}\frac{4}{\eta^2}\frac{\eta R}{8}}(z_o)\subset B_{\frac{50}{\eta^3} \frac{\eta R}{8}}(z_1)$,  we get 
\begin{align*}
&L_j^2\left(\fint_{K_{\frac{50}{\eta^3}L_j,\,\left(3+\frac{\tau }{\eta^2}\right)L_j^2  }(z_1,t_1)}|2f|^{n+1} \right)^{\frac{1}{n+1}}
=\frac{2L_j^2}{|B_{\frac{50}{\eta^3}L_j}(z_1)|^{\frac{1}{n+1}}\left\{\left(3+\frac{\tau }{\eta^2}\right)L_j^2\right\} ^{\frac{1}{n+1}}}||f||_{L^{n+1}}\\
&\leq\frac{2(\eta R/8)^2}{|B_{\frac{50}{\eta^3}\cdot\frac{\eta R}{8}}(z_1)|^{\frac{1}{n+1}}\left\{\left(3+\frac{\tau }{\eta^2}\right)(\eta R/8)^2\right\} ^{\frac{1}{n+1}}}||f||_{L^{n+1}}\\
&\leq\frac{2(\eta R/8)^2}{|B_{\frac{11}{\eta}\frac{4}{\eta^2}\frac{\eta R}{8}}(z_o)|^{\frac{1}{n+1}}\left\{\left(3+\frac{\tau }{\eta^2}\right)(\eta R/8)^2\right\} ^{\frac{1}{n+1}}}||f||_{L^{n+1}}\\
&\leq\frac{2R^2}{|B_{\frac{11}{\eta}\frac{4}{\eta^2}R}(z_o)|^{\frac{1}{n+1}}\left\{\left(3+\frac{\tau }{\eta^2}\right)R^2\right\} ^{\frac{1}{n+1}}}||f||_{L^{n+1}\left( B_{\frac{50}{\eta^3} R}(z_o)\times\left(-3R^2,\frac{\tau R^2}{\eta^2}\right]\right)}\\&\leq2\left(\frac{50}{44}\right)^{\frac{n}{n+1}} \frac{\ve_1}{4}\leq\ve_1 . 
\end{align*} Applying Corollary \ref{cor-decay-est} to $w$ in $K_{L_j}$,  we deduce that 
$|\{w\geq \tilde M_0\}\cap K_{L_j}|\leq d \tilde M_0^{-\e}|K_{L_j}|$,  i.e.,
$$\left|\left\{u\leq\nu^j\frac{\tilde M_0}{2}\right\}\cap K_{L_j}\right|\leq d \tilde M_0^{-\e}|K_{L_j}|. $$
Putting together with \eqref{eq-lem-sup-1}, we obtain 
\begin{align*}
|K_{L_j}| \leq 2d2^{\e}\nu^{-j\e}{\tilde M_0}^{-\e}|K_1|\end{align*}
since $d\tilde M_0^{-\e}\leq \frac{\kappa}{2}<1/2$.
From Theorem \ref{thm-christ}, there is a point $z_*\in Q_{L_j}$ such that $B_{c_1\delta_0^{k_j}}(z_*)\subset Q_{L_j}\subset Q_1\subset \overline B_{c_2\delta_0^{k_R}}(z_*)$. Then we have  
\begin{align*} 
\left|B_{c_1\delta_0^{k_j}}(z_*)\right| \cdot c_1^2\delta_0^{2k_j}\leq \left|B_{c_1\delta_0^{k_j}}(z_*)\right| \cdot l(k_j)&\leq |K_{L_j}|\\&\leq 2d2^{\e}\nu^{-j\e}{\tilde M_0}^{-\e}|K_1|=2d2^{\e}\nu^{-j\e}{\tilde M_0}^{-\e}|Q_1|\cdot l(k_R)\\
&<2d2^{\e}\nu^{-j\e}{\tilde M_0}^{-\e}| \overline B_{c_2\delta_0^{k_R}}(z_*)|\cdot c_2^2\delta_0^{2k_R}\\
&\leq 2d2^{\e}\nu^{-j\e}{\tilde M_0}^{-\e}\left(\frac{c_2\delta_0^{k_R}}{c_1\delta_0^{k_j}}\right)^{n}\left|B_{c_1\delta_0^{k_j}}(z_*)\right| c_2^2\delta_0^{2k_R}
\end{align*}
from the volume comparison theorem. This means $$ \delta_0^{k_j}< \left(2d2^{\e} \right)^{\frac{1}{n+2}}\tilde M_0^{-\frac{\e}{n+2}}\nu^{-\frac{j\e}{n+2}}\frac{c_2}{c_1}\delta_0^{k_R}. $$ Since $c_2\delta_0^{k_R-1}<R\leq c_2\delta_0^{k_R-2}$,    
we deduce that 
\begin{align*}
L_j&\leq c_2\delta_0^{k_j-2}\leq\frac{c_2^2}{c_1\delta_0^2} \left(2d2^{\e} \right)^{\frac{1}{n+2}}\tilde M_0^{-\frac{\e}{n+2}}\nu^{-\frac{j\e}{n+2}} \delta_0^{k_R} \\&<\frac{c_2}{c_1\delta_0} \left(2d2^{\e} \right)^{\frac{1}{n+2}}\tilde M_0^{-\frac{\e}{n+2}}\nu^{-\frac{j\e}{n+2}}R<\sigma   \tilde M_0^{-\frac{\e}{n+2}}\nu^{-\frac{j\e}{n+2}}R=L_j,\end{align*}
in contradiction to the definition of $L_j$. Therefore, (ii) is true.
\end{proof}
Thus we deduce the following lemma from Lemma \ref{lem-sup-est-1}.
\begin{lemma}\label{lem-sup-est-2} Suppose that $M$ satisfies the conditions \eqref{cond-M-1},\eqref{cond-M-2}. Let $z_o\in M,  R>0$ and $\tau\in[3,16]$.
Let $u$ be a nonnegative smooth function such that $\sL u= f $   in $
 B_{\frac{50}{\eta^3} R}(z_o)\times\left(-3R^2,\frac{\tau R^2}{\eta^2}\right]$.   Assume that  
$$\displaystyle\inf_{B_{R}(z_o)\times\left[\frac{2R^2}{\eta^2}, \frac{\tau R^2}{\eta^2} \right]}u\leq 1$$ and $$ R^2\left(\fint_{B_{\frac{50}{\eta^3}R}(z_o)\times\left(-3R^2,\frac{\tau R^2}{\eta^2}\right]}|f|^{n+1}\right)^{\frac{1}{n+1}} \leq \ve$$
for a uniform constant   $0<\ve<1 $ in Lemma \ref{lem-sup-est-1}.  Then 
$$\sup_{B_{\frac{\kappa R}{2}}(z_o)\times \left(-\frac{\kappa^2R^2}{4} ,\frac{\kappa^2R^2}{4}\right)} u\leq C, $$
where $C>0$ depends only on $n,\lambda,\Lambda$ and $a_L$.
\end{lemma}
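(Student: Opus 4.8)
The plan is to obtain Lemma~\ref{lem-sup-est-2} from Lemma~\ref{lem-sup-est-1} by the standard point-selection (``doubling'') iteration of Caffarelli--Cabr\'e. First I would argue by contradiction: assume $\sup_{B_{\kappa R/2}(z_o)\times(-\kappa^2R^2/4,\,\kappa^2R^2/4)}u>\tilde M_0$, where $\tilde M_0,\nu,\sigma$ are the constants of Lemma~\ref{lem-sup-est-1} (with $\tilde M_0$ possibly enlarged, see below). Then there is a point $(z_1,t_1)$ with $d(z_o,z_1)<\kappa R/2$, $|t_1|<\kappa^2R^2/4$ and $u(z_1,t_1)\ge\tilde M_0=\nu^{1-1}\tilde M_0$. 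The standing hypotheses of Lemma~\ref{lem-sup-est-2} on $u$ (that $\sL u=f$ with $u\ge0$ on $B_{50\eta^{-3}R}(z_o)\times(-3R^2,\tau R^2\eta^{-2}]$, the infimum bound on $B_R(z_o)\times[2R^2\eta^{-2},\tau R^2\eta^{-2}]$, and $R^2(\fint|f|^{n+1})^{1/(n+1)}\le\ve$) are exactly those required by Lemma~\ref{lem-sup-est-1}. Hence, applying that lemma with $j=1$, I obtain $(z_2,t_2)$ in the closure of $K_{50\eta^{-3}L_1,\,(3+\tau\eta^{-2})L_1^2}(z_1,t_1)$ with $u(z_2,t_2)\ge\nu\tilde M_0$. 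Iterating, once $(z_j,t_j)$ has been constructed with $u(z_j,t_j)\ge\nu^{j-1}\tilde M_0$ and with $(z_j,t_j)$ still in the admissible window $d(z_o,z_j)\le\kappa R$, $|t_j|\le\kappa^2R^2$, applying Lemma~\ref{lem-sup-est-1} with index $j$ produces $(z_{j+1},t_{j+1})$ in the closure of $K_{50\eta^{-3}L_j,\,(3+\tau\eta^{-2})L_j^2}(z_j,t_j)$ with $u(z_{j+1},t_{j+1})\ge\nu^{j}\tilde M_0$; in particular $d(z_j,z_{j+1})\le 50\eta^{-3}L_j$ and $t_j-(3+\tau\eta^{-2})L_j^2\le t_{j+1}\le t_j$.

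The crucial bookkeeping is that this iteration never leaves the window $d(z_o,\cdot)\le\kappa R$, $|\cdot|\le\kappa^2R^2$. Since $L_j=\sigma\tilde M_0^{-\e/(n+2)}\nu^{-j\e/(n+2)}R$ decays geometrically in $j$ and $\nu>1$, the series $\sum_{j\ge1}50\eta^{-3}L_j$ and $\sum_{j\ge1}(3+\tau\eta^{-2})L_j^2$ (using $\tau\le16$) are bounded by fixed multiples of $\sigma\tilde M_0^{-\e/(n+2)}R$ and $\sigma^2\tilde M_0^{-2\e/(n+2)}R^2$ respectively. I would fix $\sigma$ as in Lemma~\ref{lem-sup-est-1} (where it is only bounded from below) and then enlarge $\tilde M_0$, keeping $\nu=\tilde M_0/(\tilde M_0-1/2)$, so that $\sum_{j\ge1}50\eta^{-3}L_j\le\kappa R/2$ and $\sum_{j\ge1}(3+\tau\eta^{-2})L_j^2\le 3\kappa^2R^2/4$; this only strengthens the conditions on $(\sigma,\tilde M_0)$ used in Lemma~\ref{lem-sup-est-1}, whose proof is unaffected (inspection shows it goes through verbatim for any larger $\tilde M_0$, with $\nu$ defined this way). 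Then $d(z_o,z_j)\le d(z_o,z_1)+\sum_{l<j}50\eta^{-3}L_l\le\kappa R/2+\kappa R/2=\kappa R$, and since the time coordinate only decreases along the iteration, $t_j\in[\,-\kappa^2R^2/4-\sum_{l}(3+\tau\eta^{-2})L_l^2,\ \kappa^2R^2/4\,]\subset[-\kappa^2R^2,\kappa^2R^2]$. Thus the hypotheses of Lemma~\ref{lem-sup-est-1} are met at every stage and the induction continues for all $j\ge1$.

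Finally I would derive the contradiction: all the points $(z_j,t_j)$ lie in the compact set $\overline{B_{\kappa R}(z_o)}\times[-\kappa^2R^2,\kappa^2R^2]$, which is contained in $B_{50\eta^{-3}R}(z_o)\times(-3R^2,\tau R^2\eta^{-2}]$ because $\kappa<1$ and $\kappa^2<3\le\tau\eta^{-2}$; there $u$ is smooth, hence bounded, while $u(z_j,t_j)\ge\nu^{j-1}\tilde M_0\to\infty$ as $j\to\infty$, which is impossible. Therefore $u\le\tilde M_0$ on $B_{\kappa R/2}(z_o)\times(-\kappa^2R^2/4,\kappa^2R^2/4)$, and we take $C:=\tilde M_0$, a uniform constant depending only on $n,\lambda,\Lambda,a_L$ (through $\e,d,\sigma,\nu,\kappa$). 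The one point requiring genuine care --- and where I expect to spend most of the effort --- is precisely the summability estimate of the second paragraph, forcing the enlargement of $\tilde M_0$ so that the accumulated spatial and temporal displacements stay inside the $\kappa R$-window; the rest is the now-routine iteration of a doubling estimate.
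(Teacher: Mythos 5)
Your overall strategy --- iterate Lemma \ref{lem-sup-est-1} from a point where $u$ is large and derive a contradiction with the boundedness of $u$ once the accumulated displacements are shown to stay inside the window $d(z_o,\cdot)\leq\kappa R$, $|\cdot|\leq\kappa^2R^2$ --- is exactly the paper's. But the step you yourself single out as the crucial one contains a genuine error. You propose to force $\sum_{j\geq1}\frac{50}{\eta^3}L_j\leq\frac{\kappa R}{2}$ by enlarging $\tilde M_0$, on the grounds that the series is ``bounded by a fixed multiple of $\sigma\tilde M_0^{-\e/(n+2)}R$.'' That multiple is $\sum_{j\geq1}\nu^{-j\e/(n+2)}=\frac{\nu^{-\e/(n+2)}}{1-\nu^{-\e/(n+2)}}$, and it is \emph{not} fixed under your modification: since $\nu=\tilde M_0/(\tilde M_0-\tfrac12)=1+\frac{1}{2\tilde M_0-1}\to1$ as $\tilde M_0\to\infty$, one has $1-\nu^{-\e/(n+2)}\sim\frac{\e}{2(n+2)\tilde M_0}$, so the geometric sum grows like $\tilde M_0$. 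Hence
\[
\sum_{j\geq1}L_j\ \sim\ \frac{2(n+2)}{\e}\,\sigma R\,\tilde M_0^{\,1-\frac{\e}{n+2}}\longrightarrow\infty
\]
because $\e<1<n+2$ forces $1-\frac{\e}{n+2}>0$ (and similarly $\sum_j L_j^2\sim\tilde M_0^{\,1-\frac{2\e}{n+2}}\to\infty$). So enlarging $\tilde M_0$ makes the total displacement \emph{larger}, not smaller, and your induction cannot be closed; decreasing $\sigma$ is also unavailable since it is bounded below in Lemma \ref{lem-sup-est-1}.

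The paper's remedy keeps $\sigma,\tilde M_0,\nu$ exactly as fixed in Lemma \ref{lem-sup-est-1} and instead starts the iteration at a large index $j_o$: for fixed $\nu>1$ the series $\sum_j L_j$ and $\sum_j L_j^2$ converge, so one chooses $j_o$ with $\sum_{j\geq j_o}\frac{50}{\eta^3}L_j<\frac{\kappa R}{2}$ and $\sum_{j\geq j_o}\bigl(3+\frac{16}{\eta^2}\bigr)L_j^2<\frac{\kappa^2R^2}{4}$, assumes for contradiction that $u>\nu^{j_o-1}\tilde M_0$ somewhere in $B_{\kappa R/2}(z_o)\times(-\kappa^2R^2/4,\kappa^2R^2/4)$, and runs your iteration from $j=j_o$ onward; only the (small) tails of the series enter the bookkeeping. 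The price is that the final constant is $C=\nu^{j_o-1}\tilde M_0$ rather than $\tilde M_0$, which is still uniform. With this single change the rest of your argument (verification of the hypotheses of Lemma \ref{lem-sup-est-1} at each stage, and the contradiction with smoothness of $u$ on the compact window) is correct and agrees with the paper.
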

\begin{proof}
We take $j_o\in\N$ such that 
$$\sum_{j=j_o}^{\infty} \frac{50}{\eta^3}L_j <\frac{\kappa R}{2}\quad\mbox{and}\quad\sum_{j=j_o}^{\infty}\left(3+\frac{16}{\eta^2}\right)L_j ^2<\frac{\kappa^2 R^2}{4}. $$  We claim that  $\displaystyle\sup_{B_{\frac{\kappa R}{2}}(z_o)\times \left(-\frac{\kappa^2R^2}{4} ,\frac{\kappa^2R^2}{4}\right)} u\leq \nu^{j_o-1}\tilde M_0$ with $\tilde M_0>1$  as in Lemma \ref{lem-sup-est-1}. 
 If it does not hold, then there is a point 
$(z_{j_o},t_{j_o})\in B_{\frac{\kappa R}{2}}(z_o)\times \left(-\frac{\kappa^2R^2}{4} ,\frac{\kappa^2R^2}{4}\right)$ such that $u(z_{j_o},t_{j_o})> \nu^{j_o-1}\tilde M_0$. Applying Lemma \ref{lem-sup-est-1} with $(z_1,t_1)=(z_{j_o},t_{j_o})$,   we can  find  a point $(z_{j_o+1},t_{j_o+1})\in K_{\frac{50}{\eta^3}L_j,\,\left(3+\frac{\tau }{\eta^2}\right)L_j^2  }(z_{j_o},t_{j_o}) $ such that $$u(z_{j_o+1},t_{j_o+1})\geq \nu^{j_o}\tilde M_0 . $$  
According to the choice of $j_o$,   we have  $$d(z_o,z_{j_o+1})\leq d(z_o,z_{j_o})+d(z_{j_o},z_{j_o+1}) <\frac{\kappa R}{2}+\frac{\kappa R}{2}={\kappa R}$$ and $$|t_{j_o+1}|\leq |t_{j_o}|+|t_{j_o}-t_{j_o+1}|< \frac{\kappa^2R^2}{4}+\frac{\kappa^2R^2}{4}<{\kappa^2R^2}. $$ Thus we iterate this argument to obtain a sequence of points $(z_{j},t_{j})$ for $j\geq j_o$ satisfying 
$$d(z_o,z_j)\leq{\kappa R},\,\,|t_j|\leq{\kappa^2 R^2}\quad\mbox{and}\quad u(z_{j},t_{j})\geq \nu^{j-1}\tilde M_0, $$
since  $\displaystyle d(z_o,z_{j})\leq d(z_o,z_{j_o})+\sum_{i=j_o}^{\infty}d(z_i,z_{i+1})\leq \frac{\kappa R}{2}+\sum_{i=j_o}^{\infty}\frac{50}{\eta^3}L_i<{\kappa R}$ and  $\displaystyle|t_i|\leq |t_{j_o}|+\sum_{i=j_o}^{\infty}|t_{i}-t_{i+1}|\leq \frac{\kappa^2R^2}{4}+\sum_{i=j_o}^{\infty}\left(3+\frac{\tau }{\eta^2}\right)L_i^2<{\kappa^2R^2}$ for $j\geq j_o$.  This contradicts to the continuity of $u$ and 
therefore  we conclude that $$\displaystyle\sup_{B_{\frac{\kappa R}{2}}(z_o)\times \left(-\frac{\kappa^2R^2}{4} ,\frac{\kappa^2R^2}{4}\right)} u\leq \nu^{j_o-1}\tilde M_0. $$ 
\end{proof}

Now the Harnack inequality  in Theorem \ref{thm-harnack} follows easily from Lemma \ref{lem-sup-est-2} by using a standard covering argument and the volume comparison theorem. 

\begin{thm}[Harnack Inequality]\label{thm-harnack}Suppose that $M$ satisfies the conditions \eqref{cond-M-1},\eqref{cond-M-2}. Let $z_o\in M, $ and $R>0$. 
Let $u$ be a nonnegative smooth function in $K_{2R}(0,4R^2)\subset M\times\R$. Then 
\begin{equation}
\sup_{K_R(z_o,2R^2)}u\leq C\left\{ \inf_{K_R(z_o, 4R^2)}u+ R^2\left(\fint_{K_{2R}(z_o,4R^2)}|\sL u|^{n+1} \right)^{\frac{1}{n+1}} \right\},
\end{equation} 
where  $C>0$ is a constant depending only on $n,\lambda,\Lambda$ and $a_L$.
\end{thm}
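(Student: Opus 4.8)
The plan is to obtain Theorem~\ref{thm-harnack} from Lemma~\ref{lem-sup-est-2} by iterating the latter along a Harnack chain. First I would reduce to a normalized situation. Replacing $u$ by $u+\delta$ for $\delta>0$ (which does not change $\sL u$ and keeps $u$ nonnegative) and letting $\delta\to0^{+}$ at the very end, we may assume $u$ is positive, so that
$$\Theta:=\inf_{K_R(z_o,4R^2)}u+\frac1{\ve}\,R^2\Big(\fint_{K_{2R}(z_o,4R^2)}|\sL u|^{n+1}\Big)^{\frac1{n+1}}>0,$$
where $\ve=\ve(n,\lambda,\Lambda,a_L)$ is the constant of Lemma~\ref{lem-sup-est-2}. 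Dividing $u$ by $\Theta$, it suffices to prove $\sup_{K_R(z_o,2R^2)}u\le C$ under the two normalized hypotheses $\inf_{B_R(z_o)\times(3R^2,4R^2]}u\le1$ and $R^2(\fint_{B_{2R}(z_o)\times(0,4R^2]}|\sL u|^{n+1})^{1/(n+1)}\le\ve$. By compactness, fix a point $(\bar x,\bar t)\in\overline{B_R(z_o)}\times[3R^2,4R^2]$ with $u(\bar x,\bar t)=\inf_{\overline{B_R(z_o)}\times[3R^2,4R^2]}u\le1$.

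Next I extract from Lemma~\ref{lem-sup-est-2} a scaled, translated ``two-cylinder'' Harnack step. Fix $\rho:=c(n)R$ with $c(n)>0$ small (pinned down below) and $r:=\tfrac{\kappa\rho}{4}$; recall $\eta,\kappa$ depend only on $n$. Claim: there are $C_*=C_*(n,\lambda,\Lambda,a_L)>1$ and $0<c_1(n)<c_2(n)$ such that, whenever $Q^{-}=B_r(w)\times(a-r^2,a]$ and $Q^{+}=B_r(w')\times(b-r^2,b]$ satisfy $w,w'\in\overline{B_R(z_o)}$, $d(w,w')\le r$, $R^2<a$, $b\le4R^2$ and $b-a\in[c_1(n)\rho^2,c_2(n)\rho^2]$, one has $\sup_{Q^{-}}u\le C_*\bigl(1+\inf_{Q^{+}}u\bigr)$. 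Indeed, apply Lemma~\ref{lem-sup-est-2} centered at $\bar z:=w'$, at radius $\rho$, at a time level and with a parameter $\tau\in[3,16]$ chosen so that its inf-region $B_\rho(\bar z)\times[\cdots]$ contains $Q^{+}$ while its sup-region $B_{\kappa\rho/2}(\bar z)\times[\cdots]$ contains $Q^{-}$; since $50\rho/\eta^3\le R$, the big cylinder $B_{50\rho/\eta^3}(\bar z)\times(\cdots)$ lies in $B_{2R}(z_o)\times(0,4R^2]$, and by Bishop's volume comparison (Lemma~\ref{lem-jac-bishop}(ii)) the global smallness of $\sL u$ forces the required local smallness of $\rho^2(\fint_{B_{50\rho/\eta^3}(\bar z)}|\sL u|^{n+1})^{1/(n+1)}$. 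Applying the lemma to $u/\max(1,\inf_{Q^{+}}u)$ — which keeps $\sL u$ admissible because we divide by a number $\ge1$ — yields the claim.

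Finally I would chain these steps. Given any $(x,t)\in K_R(z_o,2R^2)$, construct cylinders $Q_j=B_r(z_j)\times(s_j-r^2,s_j]$, $j=0,\dots,N$, with $(z_0,s_0)=(x,t)$, $(z_N,s_N)=(\bar x,\bar t)$, all $z_j\in\overline{B_R(z_o)}$, all $s_j\in(R^2,4R^2]$, and consecutive ones satisfying $d(z_j,z_{j+1})\le r$ and $s_{j+1}-s_j\in[c_1(n)\rho^2,c_2(n)\rho^2]$. Such a time-monotone chain exists with $N$ bounded by some $N_0=N_0(n)$: routing the spatial path from $x$ to $z_o$ to $\bar x$ along minimizing geodesics keeps every $z_j$ in $\overline{B_R(z_o)}$, and the time budget $\bar t-t\in[R^2,3R^2]$, together with per-step increments of order $\rho^2$ carrying a factor-of-several slack, leaves room to absorb both the spatial drift and the matching of the endpoint time $\bar t$ (here $c(n)$ is chosen small enough that the spatial drift consumes strictly less than $R^2$ of the time budget and that the big cylinders all fit in $B_{2R}(z_o)\times(0,4R^2]$). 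Applying the two-cylinder step from $j=N-1$ down to $j=0$, and using $\inf_{Q_N}u\le u(\bar x,\bar t)\le1$, gives $\sup_{Q_j}u\le b_j$ with $b_N:=1$, $b_j:=C_*(1+b_{j+1})$; hence $u(x,t)\le\sup_{Q_0}u\le b_0\le(2C_*)^{N_0}=:C$. As $(x,t)$ was arbitrary and $C$ depends only on $n,\lambda,\Lambda,a_L$, we get $\sup_{K_R(z_o,2R^2)}u\le C$, and undoing the normalizations (and letting $\delta\to0^+$) proves the theorem. The main obstacle is precisely the construction in this last paragraph: verifying that between the prescribed endpoints there always exists a time-monotone chain of uniformly bounded length whose associated big cylinders remain in the domain — the point at which the explicit geometry ($50\rho/\eta^3\le R$, routing through $z_o$, time budget versus step size) is used.
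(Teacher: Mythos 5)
Your proposal is correct and follows essentially the same route as the paper: reduce to Lemma \ref{lem-sup-est-2}, turn it into a scaled two-cylinder Harnack step (with Bishop volume comparison controlling the local $L^{n+1}$ norm of $\sL u$ at the smaller scale), and chain a uniformly bounded number of such steps along a time-monotone piecewise-geodesic path through $z_o$, exactly as in the paper's choice of $A$, $N$ and the points $(x_i,t_i)$. The only cosmetic difference is that you normalize by $\Theta$ up front while the paper carries the inhomogeneous term through each link of the chain; these are equivalent.
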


\begin{proof}
According to Lemma \ref{lem-sup-est-2}, for $\tau \in[3,16]$,  a nonnegative  smooth  function $v$ in $K_{\frac{50}{\eta^3}r,\,\left(3+\frac{\tau }{\eta^2}\right)r^2  }\left(\overline x, \overline t+\frac{\tau r^2}{\eta^2}\right)$ satisfies 
\begin{equation}\label{eq-harnack-1}
\sup_{K_{\frac{\kappa r}{2}}(\overline x, \overline t)}v\leq C\left\{\inf_{K_{\frac{\kappa r}{2} }(\overline x, \overline t+\frac{\tau r^2}{\eta^2})} v+ r^2\left(\fint_{K_{\frac{50}{\eta^3}r,\,\left(3+\frac{\tau}{\eta^2}\right)r^2  }\left(\overline x, \overline t+ \frac{\tau r^2}{\eta^2}\right)}|\sL v|^{n+1}\right)^{\frac{1}{n+1}}\right\}\end{equation}
 since $\frac{\kappa}{2}<1 $.  
 
 Now, let $(x,t)\in K_{R}(z_o,2R^2)=B_R(z_o)\times(R^2,2R^2]$ and $(y,s)\in K_R(z_o,4R^2)=B_R(z_o)\times(3R^2,4R^2]$. We show that 
\begin{equation*}
u(x,t)\leq C \left\{ u(y,s) +\frac{R^2}{|K_{2R}(z_o,4R^2)|^{\frac{1}{n+1}}}||\sL u ||_{L^{n+1}(K_{2R}(z_o,4R^2) )} \right\}
\end{equation*}
for a uniform constant $C>0$ depending only on $n,\lambda,\Lambda$ and $a_L$. 
 We consider a piecewise $C^1$ path $\gamma : [0,l]\to M,\,\,\gamma(0)=x, \gamma(l)=y,\,\,l < 2R$,  consisting of a minimal geodesic parametrized by arc length joining $x$ and $z_o$,  followed by a minimal geodesic parametrized by arc length joining $z_o$ and $y$.  We notice that $\gamma([0,l])\subset B_R(z_o)$ and  $d(\gamma(s_1),\gamma(s_2))\leq|s_1-s_2|$.

We can select  uniform constants  $A>0$ and $N\in\N$ such that $$A:=\max\left\{\frac{64}{3\kappa\eta^2}, \frac{50}{\eta^3}\right\}\,\,\,\,\,\mbox{and}\,\,\,\,\,\frac{3}{16}\eta^2A^2\leq N\leq \frac{1}{3}\eta^2A^2$$  since $\frac{16-9}{16\cdot 3}\eta^2A^2\geq \frac{7\cdot4}{3^2\kappa}A>1$.  For $i=0,1,\cdots, N, $ we define $$ (x_i, t_i):= \left(\gamma\left(i\frac{l}{N}\right), i \,\frac{s-t}{N} +t\right)\in B_R(z_o)\times [R^2, 4R^2]. $$ Then we have $(x_0,t_0)=(x,t), (x_N,t_N)= (y,s)$ and  for $i=0,\cdots, N-1$, 
 \begin{align*}
 d(x_{i+1},x_i)\leq \frac{l}{N} <\frac{2R}{N}\leq\frac{64}{3\kappa\eta^2A}\cdot\frac{\kappa R}{2A}  \leq \frac{\kappa}{2}\frac{R}{A},\\
\frac{3R^2}{\eta^2A^2}\leq\frac{R^2}{N}\leq  t_{i+1}-t_i=\frac{s-t}{N} \leq  \frac{3R^2}{N}\leq \frac{16R^2}{\eta^2A^2}.
 \end{align*}
We also have that $  K_{\frac{50}{\eta^3}\frac{R}{A},\, 3\frac{R^2}{A^2}+t_i-t_{i-1} }(x_i,t_i)\subset K_{2R}(z_o,4R^2)  $  for  $i=1,\cdots,N$ since $\frac{50}{\eta^3}\frac{R}{A}\leq R$.  We apply the estimate \eqref{eq-harnack-1} with $r=\frac{R}{A},\tau=(t_{i+1}-t_i)\frac{\eta^2A^2}{R^2} $  and $(\overline x, \overline t)=(x_{i+1},t_{i+1})$   for $i=0,1,\cdots, N-1$  and use the volume comparison theorem to have 
\begin{align*}
u(x_i,t_i)&\leq C \left\{ u(x_{i+1},t_{i+1}) +\frac{(R/A)^2}{|K_{\frac{50}{\eta^3}\frac{R}{A},\, 3\frac{R^2}{A^2}+t_i-t_{i-1} }(x_{i+1},t_{i+1})|^{\frac{1}{n+1}}}||\sL u ||_{L^{n+1}(K_{2R}(z_o,4R^2) )} \right\}\\
&\leq C \left\{ u(x_{i+1},t_{i+1}) +\frac{(R/A)^2}{|B_{\frac{50}{\eta^3}\frac{R}{A}}(x_{i+1})\cdot\left(3+\frac{3}{\eta^2}\right)\frac{R^2}{A^2}|^{\frac{1}{n+1}}}||\sL u||_{L^{n+1}(K_{2R}(z_o,4R^2) )} \right\}\\
&\leq C \left\{ u(x_{i+1},t_{i+1}) +\frac{R^2}{|B_{3R}(x_{i+1})\cdot 4R^2|^{\frac{1}{n+1}}}||\sL u||_{L^{n+1}(K_{2R}(z_o,4R^2) )} \right\},
\end{align*}where a uniform constant $C>0$ may change from line to line. 
Since  $B_{3R}(x_{i+1})\supset B_{2R}(z_o)$,  we deduce that 
\begin{align*}
u(x_i,t_i)
&\leq C \left\{ u(x_{i+1},t_{i+1}) +\frac{R^2}{|B_{2R}(z_o)\cdot 4R^2|^{\frac{1}{n+1}}}||\sL u||_{L^{n+1}(K_{2R}(z_o,4R^2) )} \right\}.
\end{align*}
Therefore,   we conclude that  \begin{equation*}
u(x,t)\leq C \left\{ u(y,s) +\frac{R^2}{|K_{2R}(z_o,4R^2)|^{\frac{1}{n+1}}}||\sL u||_{L^{n+1}(K_{2R}(z_o,4R^2) )} \right\}
\end{equation*}
for a uniform constant $C>0$ since $N\in\N$ is uniform.
\end{proof}

Arguing in a similar way as  Theorem \ref{thm-harnack},  Corollary \ref{cor-decay-est-2} gives the following   weak Harnack inequality. 
 \begin{thm}[Weak Harnack Inequality]\label{lem-weak-harnack} Suppose that $M$ satisfies the conditions \eqref{cond-M-1},\eqref{cond-M-2}. Let $z_o\in M, $ and $R>0$.
Let $u$ be a nonnegative smooth function such that $\sL u\leq f$ in $K_{2R} (z_o,4R^2)$. Then
\begin{equation*}
\left(  \fint_{K_{R}(z_o,2R^2)}u^{p_o}\right)^{\frac{1}{p_o}}\leq C\left\{ \inf_{K_{R}(z_o,4R^2)}u+R^2\left(\fint_{K_{2R}(z_o,4R^2)}|f^+|^{n+1} \right)^{\frac{1}{n+1}} \right\},
\end{equation*} 
where 
$0<p_o<1$ and $C>0$  depend  only on $n,\lambda,\Lambda$ and $a_L$.
\end{thm}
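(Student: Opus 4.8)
The plan is to deduce Theorem~\ref{lem-weak-harnack} from the $L^{p_o}$ estimate of Corollary~\ref{cor-decay-est-2}, rescaled into a ``building block'' and then chained, in close analogy with the proof of Theorem~\ref{thm-harnack}. First I would normalize: set $\Theta:=\inf_{K_R(z_o,4R^2)}u+\ve^{-1}R^2(\fint_{K_{2R}(z_o,4R^2)}|f^+|^{n+1})^{1/(n+1)}$, where $\ve\in(0,\ve_1)$ is the smallness constant of Lemma~\ref{lem-decay-est} (hence of Corollary~\ref{cor-decay-est-2}), and replace $u$ by $u/\Theta$, $f$ by $f/\Theta$; since $\sL(u/\Theta)\le f/\Theta$, it then suffices to prove $(\fint_{K_R(z_o,2R^2)}u^{p_o})^{1/p_o}\le C$ under the extra hypotheses $\inf_{K_R(z_o,4R^2)}u\le1$ and $R^2(\fint_{K_{2R}(z_o,4R^2)}|f^+|^{n+1})^{1/(n+1)}\le\ve$. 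By continuity of $u$ and compactness of closed balls in the complete manifold $M$, there is a point $(p,q)\in\overline{B_R(z_o)}\times[3R^2,4R^2]$ with $u(p,q)=\inf_{K_R(z_o,4R^2)}u\le1$.

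Next I would record the building block obtained from Corollary~\ref{cor-decay-est-2} by rescaling to radius $\rho$ and translating in time: for every $\bar x\in M$, $\bar t\in\bR$, $\rho>0$, $\tau\in[3,16]$ and every nonnegative smooth $v$ with $\sL v\le g$ in $K_{\frac{50}{\eta^3}\rho,\,(3+\tau/\eta^2)\rho^2}(\bar x,\bar t+\tfrac{\tau\rho^2}{\eta^2})$, one has
\[
\Bigl(\fint_{K_{\kappa\rho}(\bar x,\bar t)}v^{p_o}\Bigr)^{1/p_o}\le C\Bigl\{\inf_{K_{\kappa\rho}(\bar x,\,\bar t+\tau\rho^2/\eta^2)}v+\rho^2\Bigl(\fint_{K_{\frac{50}{\eta^3}\rho,\,(3+\tau/\eta^2)\rho^2}(\bar x,\,\bar t+\tau\rho^2/\eta^2)}|g^+|^{n+1}\Bigr)^{1/(n+1)}\Bigr\}.
\]
This follows by normalizing $v$ by the bracketed quantity and noting that $K_{\kappa\rho}(\bar x,\bar t+\tau\rho^2/\eta^2)\subset B_\rho(\bar x)\times[\bar t+2\rho^2/\eta^2,\bar t+\tau\rho^2/\eta^2]$ for all $\tau\ge3$ (using $\kappa<1$ and $1/\eta^2>\kappa^2$), so the hypothesis of Corollary~\ref{cor-decay-est-2} is met. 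Since always $\inf_A v\le(|A'|/|A\cap A'|)^{1/p_o}(\fint_{A'}v^{p_o})^{1/p_o}$, combining this with Bishop's volume comparison (Lemma~\ref{lem-jac-bishop}) upgrades the building block to a combined link at fixed scale $\rho$: whenever $d(\bar x',\bar x)\le\kappa\rho/2$,
\[
\Bigl(\fint_{K_{\kappa\rho}(\bar x,\bar t)}v^{p_o}\Bigr)^{1/p_o}\le C\Bigl\{\Bigl(\fint_{K_{\kappa\rho}(\bar x',\,\bar t+\tau\rho^2/\eta^2)}v^{p_o}\Bigr)^{1/p_o}+\rho^2\Bigl(\fint_{K_{\frac{50}{\eta^3}\rho,\,(3+\tau/\eta^2)\rho^2}(\bar x,\,\bar t+\tau\rho^2/\eta^2)}|g^+|^{n+1}\Bigr)^{1/(n+1)}\Bigr\},
\]
which advances time by $\tau\rho^2/\eta^2\in[3\rho^2/\eta^2,16\rho^2/\eta^2]$ while displacing the spatial center by at most $\kappa\rho/2$.

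Then I would chain. Fix $\rho$ to be a small uniform multiple of $R$, small enough that $\frac{50}{\eta^3}\rho\le R$ and that, with $N$ a suitable uniform integer, the bounds $3N\rho^2/\eta^2\le R^2$, $16N\rho^2/\eta^2\ge3R^2$ and $N\kappa\rho/2\ge2R$ hold simultaneously (they are compatible since $\tfrac3{16}\le\tfrac13$ and since $\rho$ can be taken $\le\tfrac{\kappa\eta^2}{12}R$). Cover $K_R(z_o,2R^2)$ by a uniformly bounded number of cylinders $K_{\kappa\rho}(x^{(j)},s^{(j)})$ with $x^{(j)}\in B_R(z_o)$. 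For each $j$, build a chain $(x^{(j)},s^{(j)})=(y_0,\theta_0),\dots,(y_{N+1},\theta_{N+1})=(p,q)$ with $d(y_{k+1},y_k)\le\kappa\rho/2$ and $\theta_{k+1}-\theta_k=\tau_k\rho^2/\eta^2$, $\tau_k\in[3,16]$, whose spatial trace follows the broken minimizing geodesic $x^{(j)}\to z_o\to p$ (which stays in $B_R(z_o)$, exactly as in the proof of Theorem~\ref{thm-harnack}), so that every enlarged cylinder $K_{\frac{50}{\eta^3}\rho,\cdot}$ along the chain lies in $B_{2R}(z_o)\times(0,4R^2]$ and the endpoint reaches $(p,q)$. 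Applying the combined link $N$ times and the building block once more gives $(\fint_{K_{\kappa\rho}(x^{(j)},s^{(j)})}u^{p_o})^{1/p_o}\le C^{N+1}\{u(p,q)+R^2(\fint_{K_{2R}(z_o,4R^2)}|f^+|^{n+1})^{1/(n+1)}\}\le C'$, where each $f$-term was dominated by $CR^2(\fint_{K_{2R}(z_o,4R^2)}|f^+|^{n+1})^{1/(n+1)}\le C\ve$ via volume comparison. Summing over $j$ (volume comparison absorbing the overcount), taking the $p_o$-th root, and multiplying back by $\Theta$ yields the theorem, with $0<p_o<1$ and $C$ uniform.

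The main obstacle is precisely this chaining. Unlike in Theorem~\ref{thm-harnack}, where the two-sided estimate \eqref{eq-harnack-1} permits chaining pointwise values directly, a supersolution supplies only the $L^{p_o}$ estimate, so each link must route through an averaged quantity and a volume-comparison step to move the spatial center; consequently the scale cannot be allowed to grow along the chain, which forces $\rho$ to be a uniformly small multiple of $R$ and the number of links to be a uniformly large constant, and one must check that with this choice the chain simultaneously traverses the time gap from $2R^2$ to $4R^2$, spreads across $B_R(z_o)$, and never lets an enlarged cylinder escape $K_{2R}(z_o,4R^2)$. This bookkeeping is routine but careful, and is the heart of the argument; no analytic input beyond Corollary~\ref{cor-decay-est-2} enters.
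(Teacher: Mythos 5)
Your proposal is correct and follows essentially the same route as the paper: the building block is Corollary \ref{cor-decay-est-2} rescaled to a small uniform fraction of $R$, the links are closed by bounding the infimum over an overlapping cylinder by the $L^{p_o}$-average over the next one via volume doubling (the paper's \eqref{eq-w-harnck-x}), and the chain follows the broken geodesic through $z_o$ exactly as in Theorem \ref{thm-harnack}. The only cosmetic difference is that you cover $K_R(z_o,2R^2)$ by boundedly many small cylinders and sum, whereas the paper uses the parabolic dyadic decomposition to reduce to a single cube of comparable measure; both reductions rest on the same volume-comparison bookkeeping.
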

\begin{proof}
Let $\e>0$ be the constant in Corollary \ref{cor-decay-est} and let $p_o:=\frac{\e}{2}$. 
 We consider a parabolic decomposition of $M\times(0, 4R^2]$ according to Lemma \ref{lem-decomp-x-t}.  Let $k:= k_{\frac{\kappa R}{A}}$  for the  constant $A>0$ in the proof  of Theorem \ref{thm-harnack}.  Let $\left\{K^{k,\alpha}:=Q^{k,\alpha}\times(t^{k,\alpha}-l(k), t^{k,\alpha}]\right\}_{\alpha\in J'_k}$ be a family of parabolic dyadic cubes intersecting $K_{R}(z_o,2R^2)$. We note that  $diam(Q^{k,\alpha})\leq c_2\delta_0^k\leq \delta_0\cdot\frac{\kappa R}{A}$ and $l(k)\leq \delta_0^2\cdot\frac{\kappa^2 R^2}{A^2}$.  Following the same argument as Corollary \ref{cor-decay-est-2}, we deduce  that $|J'_k|$ is uniformly bounded and  
\begin{align*}
\int_{K_{R}(z_o,2R^2)} u^{p_o}\leq |J'_k|\int_{K^{k,\alpha}} u^{p_o} 
\end{align*}
for some $K^{k,\alpha}$ with $\alpha\in J'_k$. Then we find $(x,t)\in  K^{k,\alpha}\cap B_R(z_o)\times[R^2,(2+\delta_0^2\frac{\kappa^2}{A^2})R^2]$ such that  
$K^{k,\alpha}\subset K_{\frac{\kappa R}{A}}(x,t)$ since $diam(Q^{k,\alpha})\leq\delta_0\cdot\frac{\kappa R}{A}$ and $l(k)\leq\delta_0^2\cdot\frac{\kappa^2 R^2}{A^2}$.
Since $d(z_o,x)\leq R$ and $B_{\frac{\kappa R}{A}}(x)\subset B_{\left(1+\frac{\kappa}{A}\right)R}(z_o)$,   we have\begin{equation}\label{eq-w-hn}
\frac{1}{|K_{R}(z_o,2R^2)|}\int_{K_{R}(z_o,2R^2)}u^{p_o} \leq \frac{C_0}{|K_{\frac{\kappa R}{A}}(x,t)|}  \int_{K_{\frac{\kappa R}{A}}(x,t)}u^{p_o}
\end{equation}
for $C_0:=|J'_k|\left(1+\frac{\kappa}{A}\right)^n\cdot\frac{\kappa^2}{A^2}$ by using the volume comparison theorem.

We set 
$$ \inf_{K_R(z_o,4R^2)} u=:u(y,s)$$
for some $(y,s)\in\overline{K_R(z_o,4R^2)} $. As in the proof of Theorem \ref{thm-harnack} we take a piecewise geodesic path $\gamma$ connecting $x$  to $y$. Let $N\in\N$ be the constant in Theorem  \ref{thm-harnack}.  For $i=0,1,\cdots, N, $ we define $$ (x_i, t_i):= \left(\gamma\left(i\frac{l}{N}\right), i \,\frac{s-t}{N} +t\right)\in B_R(z_o)\times [R^2, 4R^2]. $$ Then we have $(x_0,t_0)=(x,t), (x_N,t_N)= (y,s)$ and  for $i=0,\cdots, N-1$, 
 \begin{align*}
 d(x_{i+1},x_i) <  \frac{\kappa}{2}\cdot\frac{R}{A}\quad\mbox{and }\quad
\frac{3R^2}{\eta^2A^2}\leq   t_{i+1}-t_i \leq \frac{16R^2}{\eta^2A^2}.
 \end{align*}
 It is easy to check that  for any $i=0,1,\cdots, N-1$,  $B_{\frac{\kappa R}{A}}(x_i)\cap B_{\frac{\kappa R}{A}}(x_{i+1})\supset B_{\frac{\kappa R}{2A}}(x_{i+1})$ and hence 
 \begin{equation}\label{eq-w-harnck-x}
 \begin{split}
\inf_{K_{\frac{\kappa R}{A}}(x_i,t_{i+1})} u\leq \inf_{K_{\frac{\kappa R}{2A}}(x_{i+1},t_{i+1})}u
&\leq \left\{\frac{1}{|K_{\frac{\kappa R}{2A}}(x_{i+1},t_{i+1})|}\int_{K_{\frac{\kappa R}{2A}}(x_{i+1},t_{i+1})} u^{p_o}\right\}^{\frac{1}{p_o}}\\&\leq 2^{\frac{n+2}{p_o}} \left\{\frac{1}{|K_{\frac{\kappa R}{A}}(x_{i+1},t_{i+1})| }\int_{K_{\frac{\kappa R}{A}}(x_{i+1},t_{i+1})} u^{p_o}\right\}^{\frac{1}{p_o}}.
\end{split}
\end{equation}

On the other hand, 
Corollary \ref{cor-decay-est-2} says  that for $i=0,1,\cdots, N-1$
\begin{align*}
&\left \{\frac{1}{|K_{\frac{\kappa R}{A}}(x_i,t_i)| }\int_{K_{\frac{\kappa R}{A}}(x_i,t_i)} u^{p_o}\right\}^{1/{p_o}} \\
&\leq C \left\{ \inf_{K_{\frac{\kappa R}{A}}(x_i,t_{i+1}) }u+\frac{(R/A)^2}{\left|K_{\frac{50}{\eta^3}\cdot \frac{R}{A},\, 3\frac{R^2}{A^2}+t_{i+1}-t_i  }\left(x_i,t_{i+1}\right)\right|^{\frac{1}{n+1}}}\left\|f^+\right\|_{L^{n+1}(K_{2R}(z_o,4R^2))}\right\}\\
&\leq C \left\{ \inf_{K_{\frac{\kappa R}{A}}(x_i,t_{i+1}) }u+\frac{R^2}{\left|K_{2R}\left( z_o,4R^2\right)\right|^{\frac{1}{n+1}}}\left\|f^+\right\|_{L^{n+1}(K_{2R}(z_o,4R^2))}\right\}
\end{align*} by using the same argument as Theorem \ref{thm-harnack} with the volume comparison theorem.  
 Combining with \eqref{eq-w-harnck-x}, we deduce  
\begin{align*} 
&\left \{\frac{1}{|K_{\frac{\kappa R}{A}}(x,t)| }\int_{K_{\frac{\kappa R}{A}}(x,t)} u^{p_o}\right\}^{1/{p_o}} \\
 &\leq C \left\{ \inf_{K_{\frac{\kappa R}{A}}(x_{N-1},t_{N}) }u+\frac{R^2}{\left|K_{2R}\left( z_o,4R^2\right)\right|^{\frac{1}{n+1}}}\left\|f^+\right\|_{L^{n+1}(K_{2R}(z_o,4R^2))}\right\}\\&\leq  C \left\{ \inf_{K_{\frac{\kappa R}{2A}}(x_{N},t_{N}) }u+\frac{R^2}{\left|K_{2R}\left( z_o,4R^2\right)\right|^{\frac{1}{n+1}}}\left\|f^+\right\|_{L^{n+1}(K_{2R}(z_o,4R^2))}\right\}\\
&\leq  C \left\{  u(y,s)+\frac{R^2}{\left|K_{2R}\left( z_o,4R^2\right)\right|^{\frac{1}{n+1}}}\left\|f^+\right\|_{L^{n+1}(K_{2R}(z_o,4R^2))}\right\},
\end{align*} for a uniform constant $C>0$ since $N\in\N$ is uniform.
Therefore, the result follows  from \eqref{eq-w-hn}.
\end{proof}

\begin{acknowledgment}
Seick Kim is supported by NRF Grant No. 2012-040411
 and R31-10049 (WCU program). Ki-Ahm Lee was supported by Basic Science Research Program through the National Research Foundation of Korea(NRF) grant funded by the Korea government(MEST)(2010-0001985).
\end{acknowledgment}


\end{document}